\newcommand{\crossing}{
    \begin{tikzpicture}[scale=0.2,
                baseline={([yshift=-\the\dimexpr\fontdimen22\textfont2\relax]
                    current bounding box.center)},
        ]
        \clip (-0.2,-1.2) rectangle + (2.4,2.4);
        \draw[thick, orange] (0,-1) to (0.8,-0.2);
\draw[thick, orange] (1.2,0.2) to (2,1);
\draw[thick, orange] (0,1) to (2,-1);\end{tikzpicture}
    }
\newcommand{\positiveCrossing}{
    \begin{tikzpicture}[scale=0.2,
                baseline={([yshift=-\the\dimexpr\fontdimen22\textfont2\relax]
                    current bounding box.center)},
        ]
        \clip (-0.2,-1.2) rectangle + (2.4,2.4);
        \begin{scope}[xscale=-1, xshift=-2cm]
\draw[thick, orange] (0,-1) to (0.8,-0.2);
\draw[thick, orange, ->] (1.2,0.2) to (2,1);
\draw[thick, orange, ->] (2,-1) to (0,1);
\end{scope}\end{tikzpicture}
    }
\newcommand{\negativeCrossing}{
    \begin{tikzpicture}[scale=0.2,
                baseline={([yshift=-\the\dimexpr\fontdimen22\textfont2\relax]
                    current bounding box.center)},
        ]
        \clip (-0.2,-1.2) rectangle + (2.4,2.4);
        \draw[thick, orange] (0,-1) to (0.8,-0.2);
\draw[thick, orange, ->] (1.2,0.2) to (2,1);
\draw[thick, orange, ->] (2,-1) to (0,1);\end{tikzpicture}
    }
\newcommand{\rI}{
    \begin{tikzpicture}[scale=0.2,
                baseline={([yshift=-\the\dimexpr\fontdimen22\textfont2\relax]
                    current bounding box.center)},
        ]
        \clip (-0.2,-1.2) rectangle + (3.4,2.4);
        \draw[thick, orange, out=90, in=90] (0,-1) to (3,0);
\draw[thick, orange, out=270, in=315] (3,0) to (1.1,-0.1);
\draw[thick, orange, out=135] (0.6,0.4) to (0,1);\end{tikzpicture}
    }
\newcommand{\rIa}{
    \begin{tikzpicture}[scale=0.2,
                baseline={([yshift=-\the\dimexpr\fontdimen22\textfont2\relax]
                    current bounding box.center)},
        ]
        \clip (-0.2,-1.2) rectangle + (3.4,2.4);
        \draw[thick, orange, out=45, in=270] (0,-1) to (0.5,0);
\draw[thick, orange, out=90, in=315] (0.5,0) to (0,1);
\draw[thick, orange, out=90, in=90] (1,0) to (3,0);
\draw[thick, orange, out=270, in=270] (3,0) to (1,0);\end{tikzpicture}
    }
\newcommand{\rIb}{
    \begin{tikzpicture}[scale=0.2,
                baseline={([yshift=-\the\dimexpr\fontdimen22\textfont2\relax]
                    current bounding box.center)},
        ]
        \clip (-0.2,-1.2) rectangle + (3.4,2.4);
        \draw[thick, orange, out=90, in=135] (0,-1) to (1, -0.4);
\draw[thick, orange, out=315, in=270] (1, -0.4) to (3, 0);
\draw[thick, orange, out=90, in=45] (3,0) to (1, 0.4);
\draw[thick, orange, out=225, in=270] (1, 0.4) to (0,1);\end{tikzpicture}
    }
\newcommand{\rIaam}{
    \begin{tikzpicture}[scale=0.2,
                baseline={([yshift=-\the\dimexpr\fontdimen22\textfont2\relax]
                    current bounding box.center)},
        ]
        \clip (-0.2,-2.4) rectangle + (3.4,3.8);
        \draw[thick, orange, out=45, in=270] (0,-1) to (0.5,0);
\draw[thick, orange, out=90, in=315] (0.5,0) to (0,1);
\draw[thick, orange, out=90, in=90] (1,0) to (3,0);
\draw[thick, orange, out=270, in=270] (3,0) to (1,0);

\node[] at (0.25,-1.6) {\tiny \color{blue} $a$};
\node[] at (2,-1.6) {\tiny \color{blue} \textminus};\end{tikzpicture}
    }
\newcommand{\rIba}{
    \begin{tikzpicture}[scale=0.2,
                baseline={([yshift=-\the\dimexpr\fontdimen22\textfont2\relax]
                    current bounding box.center)},
        ]
        \clip (-0.2,-2.4) rectangle + (3.4,3.8);
        \draw[thick, orange, out=90, in=135] (0,-1) to (1, -0.4);
\draw[thick, orange, out=315, in=270] (1, -0.4) to (3, 0);
\draw[thick, orange, out=90, in=45] (3,0) to (1, 0.4);
\draw[thick, orange, out=225, in=270] (1, 0.4) to (0,1);

\node[] at (1.5,-1.6) {\tiny \color{blue} $a$};\end{tikzpicture}
    }
\newcommand{\rII}{
    \begin{tikzpicture}[scale=0.2,
                baseline={([yshift=-\the\dimexpr\fontdimen22\textfont2\relax]
                    current bounding box.center)},
        ]
        \clip (-0.2,-1.2) rectangle + (5.4,2.4);
        \draw[thick, orange, out=90, in=90] (0,-1) to (5,-1);
\draw[thick, orange, out=135, in=-45] (0.6,0.4) to (0,1);
\draw[thick, orange, out=225, in=315] (3.9,-0.1) to (1.1,-0.1);
\draw[thick, orange, out=45, in=225] (4.4,0.4) to (5,1);\end{tikzpicture}
    }
\newcommand{\rIIaa}{
    \begin{tikzpicture}[scale=0.2,
                baseline={([yshift=-\the\dimexpr\fontdimen22\textfont2\relax]
                    current bounding box.center)},
        ]
        \clip (-0.2,-1.2) rectangle + (4.4,2.4);
        \input{diagrams/r2aa}\end{tikzpicture}
    }
\newcommand{\rIIab}{
    \begin{tikzpicture}[scale=0.2,
                baseline={([yshift=-\the\dimexpr\fontdimen22\textfont2\relax]
                    current bounding box.center)},
        ]
        \clip (-0.2,-1.2) rectangle + (4.4,2.4);
        \draw[thick, orange, out=45, in=270] (0,-1) to (0.5,0);
\draw[thick, orange, out=90, in=315] (0.5,0) to (0,1);
\draw[thick, orange, out=90, in=90] (1,0) to (3,0);
\draw[thick, orange, out=270, in=270] (3,0) to (1,0);
\begin{scope}[xscale=-1, xshift=-4cm]
    \draw[thick, orange, out=45, in=270] (0,-1) to (0.5,0);
    \draw[thick, orange, out=90, in=315] (0.5,0) to (0,1);
\end{scope}\end{tikzpicture}
    }
\newcommand{\rIIba}{
    \begin{tikzpicture}[scale=0.2,
                baseline={([yshift=-\the\dimexpr\fontdimen22\textfont2\relax]
                    current bounding box.center)},
        ]
        \clip (-0.2,-1.2) rectangle + (4.4,2.4);
        \input{diagrams/r2ba}\end{tikzpicture}
    }
\newcommand{\rIIbb}{
    \begin{tikzpicture}[scale=0.2,
                baseline={([yshift=-\the\dimexpr\fontdimen22\textfont2\relax]
                    current bounding box.center)},
        ]
        \clip (-0.2,-1.2) rectangle + (4.4,2.4);
        \draw[thick, orange, out=90, in=135] (0,-1) to (1, -0.4);
\draw[thick, orange, out=315, in=270] (1, -0.4) to (3, 0);
\draw[thick, orange, out=90, in=45] (3,0) to (1, 0.4);
\draw[thick, orange, out=225, in=270] (1, 0.4) to (0,1);
\begin{scope}[xscale=-1, xshift=-4cm]
    \draw[thick, orange, out=45, in=270] (0,-1) to (0.5,0);
    \draw[thick, orange, out=90, in=315] (0.5,0) to (0,1);
\end{scope}\end{tikzpicture}
    }
\newcommand{\zeroRes}{
    \begin{tikzpicture}[scale=0.2,
                baseline={([yshift=-\the\dimexpr\fontdimen22\textfont2\relax]
                    current bounding box.center)},
        ]
        \clip (-0.2,-1.2) rectangle + (2.4,2.4);
        \draw[thick, orange, out=-45, in=-135] (0,1) to (2,1);
\draw[thick, orange, out=45, in=135] (0,-1) to (2,-1);\end{tikzpicture}
    }
\newcommand{\oneRes}{
    \begin{tikzpicture}[scale=0.2,
                baseline={([yshift=-\the\dimexpr\fontdimen22\textfont2\relax]
                    current bounding box.center)},
        ]
        \clip (-0.2,-1.2) rectangle + (2.4,2.4);
        \draw[thick, orange, out=45, in=-45] (0,-1) to (0,1);
\draw[thick, orange, out=135, in=-135] (2,-1) to (2,1);\end{tikzpicture}
    }
\newcommand{\hopfLink}{
    \begin{tikzpicture}[scale=0.4,
                baseline={([yshift=-\the\dimexpr\fontdimen22\textfont2\relax]
                    current bounding box.center)},
        ]
        \clip (-0.2,-0.7) rectangle + (2.2,1.4);
        \input{diagrams/hopfLink}\end{tikzpicture}
    }
\newcommand{\hopfLinkaa}{
    \begin{tikzpicture}[scale=0.4,
                baseline={([yshift=-\the\dimexpr\fontdimen22\textfont2\relax]
                    current bounding box.center)},
        ]
        \clip (-0.2,-0.7) rectangle + (2.2,1.4);
        \input{diagrams/hopfLinkaa}\end{tikzpicture}
    }
\newcommand{\hopfLinkab}{
    \begin{tikzpicture}[scale=0.4,
                baseline={([yshift=-\the\dimexpr\fontdimen22\textfont2\relax]
                    current bounding box.center)},
        ]
        \clip (-0.2,-0.7) rectangle + (2.2,1.4);
        \input{diagrams/hopfLinkab}\end{tikzpicture}
    }
\newcommand{\hopfLinkba}{
    \begin{tikzpicture}[scale=0.4,
                baseline={([yshift=-\the\dimexpr\fontdimen22\textfont2\relax]
                    current bounding box.center)},
        ]
        \clip (-0.2,-0.7) rectangle + (2.2,1.4);
        \input{diagrams/hopfLinkba}\end{tikzpicture}
    }
\newcommand{\hopfLinkbb}{
    \begin{tikzpicture}[scale=0.4,
                baseline={([yshift=-\the\dimexpr\fontdimen22\textfont2\relax]
                    current bounding box.center)},
        ]
        \clip (-0.2,-0.7) rectangle + (2.2,1.4);
        \input{diagrams/hopfLinkbb}\end{tikzpicture}
    }
\newcommand{\annularHopfLink}{
    \begin{tikzpicture}[scale=0.4,
                baseline={([yshift=-\the\dimexpr\fontdimen22\textfont2\relax]
                    current bounding box.center)},
        ]
        \clip (-0.2,-0.7) rectangle + (2.2,1.4);
        \input{diagrams/annularHopfLink}\end{tikzpicture}
    }
\newcommand{\annularHopfLinkaa}{
    \begin{tikzpicture}[scale=0.4,
                baseline={([yshift=-\the\dimexpr\fontdimen22\textfont2\relax]
                    current bounding box.center)},
        ]
        \clip (-0.2,-0.7) rectangle + (2.2,1.4);
        \input{diagrams/annularHopfLinkaa}\end{tikzpicture}
    }
\newcommand{\annularHopfLinkab}{
    \begin{tikzpicture}[scale=0.4,
                baseline={([yshift=-\the\dimexpr\fontdimen22\textfont2\relax]
                    current bounding box.center)},
        ]
        \clip (-0.2,-0.7) rectangle + (2.2,1.4);
        \input{diagrams/annularHopfLinkab}\end{tikzpicture}
    }
\newcommand{\annularHopfLinkba}{
    \begin{tikzpicture}[scale=0.4,
                baseline={([yshift=-\the\dimexpr\fontdimen22\textfont2\relax]
                    current bounding box.center)},
        ]
        \clip (-0.2,-0.7) rectangle + (2.2,1.4);
        \input{diagrams/annularHopfLinkba}\end{tikzpicture}
    }
\newcommand{\annularHopfLinkbb}{
    \begin{tikzpicture}[scale=0.4,
                baseline={([yshift=-\the\dimexpr\fontdimen22\textfont2\relax]
                    current bounding box.center)},
        ]
        \clip (-0.2,-0.7) rectangle + (2.2,1.4);
        \input{diagrams/annularHopfLinkbb}\end{tikzpicture}
    }
\newcommand{\multiHopfLink}{
    \begin{tikzpicture}[scale=0.4,
                baseline={([yshift=-\the\dimexpr\fontdimen22\textfont2\relax]
                    current bounding box.center)},
        ]
        \clip (-0.2,-0.7) rectangle + (2.2,1.4);
        \input{diagrams/multiHopfLink}\end{tikzpicture}
    }
\newcommand{\multiHopfLinkaa}{
    \begin{tikzpicture}[scale=0.4,
                baseline={([yshift=-\the\dimexpr\fontdimen22\textfont2\relax]
                    current bounding box.center)},
        ]
        \clip (-0.2,-0.7) rectangle + (2.2,1.4);
        \input{diagrams/multiHopfLinkaa}\end{tikzpicture}
    }
\newcommand{\multiHopfLinkab}{
    \begin{tikzpicture}[scale=0.4,
                baseline={([yshift=-\the\dimexpr\fontdimen22\textfont2\relax]
                    current bounding box.center)},
        ]
        \clip (-0.2,-0.7) rectangle + (2.2,1.4);
        \input{diagrams/multiHopfLinkab}\end{tikzpicture}
    }
\newcommand{\multiHopfLinkba}{
    \begin{tikzpicture}[scale=0.4,
                baseline={([yshift=-\the\dimexpr\fontdimen22\textfont2\relax]
                    current bounding box.center)},
        ]
        \clip (-0.2,-0.7) rectangle + (2.2,1.4);
        \input{diagrams/multiHopfLinkba}\end{tikzpicture}
    }
\newcommand{\multiHopfLinkbb}{
    \begin{tikzpicture}[scale=0.4,
                baseline={([yshift=-\the\dimexpr\fontdimen22\textfont2\relax]
                    current bounding box.center)},
        ]
        \clip (-0.2,-0.7) rectangle + (2.2,1.4);
        \input{diagrams/multiHopfLinkbb}\end{tikzpicture}
    }
\newcommand{\rIII}{
    \begin{tikzpicture}[scale=0.25,
                baseline={([yshift=-\the\dimexpr\fontdimen22\textfont2\relax]
                    current bounding box.center)},
        ]
        \clip (-0.2,-0.2) rectangle + (3.4,3.4);
        \input{diagrams/r3}\end{tikzpicture}
    }
\newcommand{\rIIIaaa}{
    \begin{tikzpicture}[scale=0.25,
                baseline={([yshift=-\the\dimexpr\fontdimen22\textfont2\relax]
                    current bounding box.center)},
        ]
        \clip (-0.2,-0.2) rectangle + (3.4,3.4);
        \input{diagrams/r3aaa}\end{tikzpicture}
    }
\newcommand{\rIIIbaa}{
    \begin{tikzpicture}[scale=0.25,
                baseline={([yshift=-\the\dimexpr\fontdimen22\textfont2\relax]
                    current bounding box.center)},
        ]
        \clip (-0.2,-0.2) rectangle + (3.4,3.4);
        \input{diagrams/r3baa}\end{tikzpicture}
    }
\newcommand{\rIIIaba}{
    \begin{tikzpicture}[scale=0.25,
                baseline={([yshift=-\the\dimexpr\fontdimen22\textfont2\relax]
                    current bounding box.center)},
        ]
        \clip (-0.2,-0.2) rectangle + (3.4,3.4);
        \input{diagrams/r3aba}\end{tikzpicture}
    }
\newcommand{\rIIIaab}{
    \begin{tikzpicture}[scale=0.25,
                baseline={([yshift=-\the\dimexpr\fontdimen22\textfont2\relax]
                    current bounding box.center)},
        ]
        \clip (-0.2,-0.2) rectangle + (3.4,3.4);
        \input{diagrams/r3aab}\end{tikzpicture}
    }
\newcommand{\rIIIbba}{
    \begin{tikzpicture}[scale=0.25,
                baseline={([yshift=-\the\dimexpr\fontdimen22\textfont2\relax]
                    current bounding box.center)},
        ]
        \clip (-0.2,-0.2) rectangle + (3.4,3.4);
        \input{diagrams/r3bba}\end{tikzpicture}
    }
\newcommand{\rIIIbab}{
    \begin{tikzpicture}[scale=0.25,
                baseline={([yshift=-\the\dimexpr\fontdimen22\textfont2\relax]
                    current bounding box.center)},
        ]
        \clip (-0.2,-0.2) rectangle + (3.4,3.4);
        \input{diagrams/r3bab}\end{tikzpicture}
    }
\newcommand{\rIIIabb}{
    \begin{tikzpicture}[scale=0.25,
                baseline={([yshift=-\the\dimexpr\fontdimen22\textfont2\relax]
                    current bounding box.center)},
        ]
        \clip (-0.2,-0.2) rectangle + (3.4,3.4);
        \input{diagrams/r3abb}\end{tikzpicture}
    }
\newcommand{\rIIIbbb}{
    \begin{tikzpicture}[scale=0.25,
                baseline={([yshift=-\the\dimexpr\fontdimen22\textfont2\relax]
                    current bounding box.center)},
        ]
        \clip (-0.2,-0.2) rectangle + (3.4,3.4);
        \input{diagrams/r3bbb}\end{tikzpicture}
    }
\newcommand{\rIIIx}{
    \begin{tikzpicture}[scale=0.25,
                baseline={([yshift=-\the\dimexpr\fontdimen22\textfont2\relax]
                    current bounding box.center)},
        ]
        \clip (-0.2,-0.2) rectangle + (3.4,3.4);
        \input{diagrams/r3x}\end{tikzpicture}
    }
\newcommand{\rIIIxaaa}{
    \begin{tikzpicture}[scale=0.25,
                baseline={([yshift=-\the\dimexpr\fontdimen22\textfont2\relax]
                    current bounding box.center)},
        ]
        \clip (-0.2,-0.2) rectangle + (3.4,3.4);
        \input{diagrams/r3xaaa}\end{tikzpicture}
    }
\newcommand{\rIIIxbaa}{
    \begin{tikzpicture}[scale=0.25,
                baseline={([yshift=-\the\dimexpr\fontdimen22\textfont2\relax]
                    current bounding box.center)},
        ]
        \clip (-0.2,-0.2) rectangle + (3.4,3.4);
        \input{diagrams/r3xbaa}\end{tikzpicture}
    }
\newcommand{\rIIIxaba}{
    \begin{tikzpicture}[scale=0.25,
                baseline={([yshift=-\the\dimexpr\fontdimen22\textfont2\relax]
                    current bounding box.center)},
        ]
        \clip (-0.2,-0.2) rectangle + (3.4,3.4);
        \input{diagrams/r3xaba}\end{tikzpicture}
    }
\newcommand{\rIIIxaab}{
    \begin{tikzpicture}[scale=0.25,
                baseline={([yshift=-\the\dimexpr\fontdimen22\textfont2\relax]
                    current bounding box.center)},
        ]
        \clip (-0.2,-0.2) rectangle + (3.4,3.4);
        \input{diagrams/r3xaab}\end{tikzpicture}
    }
\newcommand{\rIIIxbba}{
    \begin{tikzpicture}[scale=0.25,
                baseline={([yshift=-\the\dimexpr\fontdimen22\textfont2\relax]
                    current bounding box.center)},
        ]
        \clip (-0.2,-0.2) rectangle + (3.4,3.4);
        \input{diagrams/r3xbba}\end{tikzpicture}
    }
\newcommand{\rIIIxbab}{
    \begin{tikzpicture}[scale=0.25,
                baseline={([yshift=-\the\dimexpr\fontdimen22\textfont2\relax]
                    current bounding box.center)},
        ]
        \clip (-0.2,-0.2) rectangle + (3.4,3.4);
        \input{diagrams/r3xbab}\end{tikzpicture}
    }
\newcommand{\rIIIxabb}{
    \begin{tikzpicture}[scale=0.25,
                baseline={([yshift=-\the\dimexpr\fontdimen22\textfont2\relax]
                    current bounding box.center)},
        ]
        \clip (-0.2,-0.2) rectangle + (3.4,3.4);
        \input{diagrams/r3xabb}\end{tikzpicture}
    }
\newcommand{\rIIIxbbb}{
    \begin{tikzpicture}[scale=0.25,
                baseline={([yshift=-\the\dimexpr\fontdimen22\textfont2\relax]
                    current bounding box.center)},
        ]
        \clip (-0.2,-0.2) rectangle + (3.4,3.4);
        \input{diagrams/r3xbbb}\end{tikzpicture}
    }
\newcommand{\rIIIb}{
    \begin{tikzpicture}[scale=0.25,
                baseline={([yshift=-\the\dimexpr\fontdimen22\textfont2\relax]
                    current bounding box.center)},
        ]
        \clip (-0.2,-0.2) rectangle + (3.4,3.4);
        \input{diagrams/r3b}\end{tikzpicture}
    }
\newcommand{\rIIIxb}{
    \begin{tikzpicture}[scale=0.25,
                baseline={([yshift=-\the\dimexpr\fontdimen22\textfont2\relax]
                    current bounding box.center)},
        ]
        \clip (-0.2,-0.2) rectangle + (3.4,3.4);
        \input{diagrams/r3xb}\end{tikzpicture}
    }
\newcommand{\rIregions}{
    \begin{tikzpicture}[scale=0.4,
                baseline={([yshift=-\the\dimexpr\fontdimen22\textfont2\relax]
                    current bounding box.center)},
        ]
        \clip (-0.4,-1.2) rectangle + (3.6,2.4);
        \draw[thick, orange, out=90, in=90] (0,-1) to (3,0);
\draw[thick, orange, out=270, in=315] (3,0) to (1.1,-0.1);
\draw[thick, orange, out=135] (0.6,0.4) to (0,1);

\node[] at (-0.2,0.2) {\tiny \color{red} $A$};
\node[] at (1,1) {\tiny \color{red} $B$};
\node[] at (2,0) {\tiny \color{red} $C$};\end{tikzpicture}
    }
\newcommand{\rIaregions}{
    \begin{tikzpicture}[scale=0.4,
                baseline={([yshift=-\the\dimexpr\fontdimen22\textfont2\relax]
                    current bounding box.center)},
        ]
        \clip (-0.4,-1.2) rectangle + (3.6,2.4);
        \draw[thick, orange, out=45, in=270] (0,-1) to (0.5,0);
\draw[thick, orange, out=90, in=315] (0.5,0) to (0,1);
\draw[thick, orange, out=90, in=90] (1,0) to (3,0);
\draw[thick, orange, out=270, in=270] (3,0) to (1,0);

\node[] at (-0.2,0) {\tiny \color{red} $A$};
\node[] at (0.8,1) {\tiny \color{red} $B$};
\node[] at (2,0) {\tiny \color{red} $C$};\end{tikzpicture}
    }
\newcommand{\rIbregions}{
    \begin{tikzpicture}[scale=0.4,
                baseline={([yshift=-\the\dimexpr\fontdimen22\textfont2\relax]
                    current bounding box.center)},
        ]
        \clip (-0.4,-1.2) rectangle + (3.6,2.4);
        \draw[thick, orange, out=90, in=135] (0,-1) to (1, -0.4);
\draw[thick, orange, out=315, in=270] (1, -0.4) to (3, 0);
\draw[thick, orange, out=90, in=45] (3,0) to (1, 0.4);
\draw[thick, orange, out=225, in=270] (1, 0.4) to (0,1);

\node[] at (-0.2,0) {\tiny \color{red} $A$};
\node[] at (0.7,1) {\tiny \color{red} $B$};
\node[] at (2,0) {\tiny \color{red} $C$};\end{tikzpicture}
    }
\newcommand{\rIIregions}{
    \begin{tikzpicture}[scale=0.4,
                baseline={([yshift=-\the\dimexpr\fontdimen22\textfont2\relax]
                    current bounding box.center)},
        ]
        \clip (-0.4,-1.4) rectangle + (5.8,2.8);
        \draw[thick, orange, out=90, in=90] (0,-1) to (5,-1);
\draw[thick, orange, out=135, in=-45] (0.6,0.4) to (0,1);
\draw[thick, orange, out=225, in=315] (3.9,-0.1) to (1.1,-0.1);
\draw[thick, orange, out=45, in=225] (4.4,0.4) to (5,1);

\node[] at (-0.2,0.2) {\tiny \color{red} $A$};
\node[] at (2.5,1) {\tiny \color{red} $B$};
\node[] at (5.2,0.2) {\tiny \color{red} $C$};
\node[] at (2.5,-1.1) {\tiny \color{red} $D$};
\node[] at (2.5,-0.1) {\tiny \color{red} $E$};\end{tikzpicture}
    }
\newcommand{\rIIabProofOne}{
    \begin{tikzpicture}[scale=0.3,
                baseline={([yshift=-\the\dimexpr\fontdimen22\textfont2\relax]
                    current bounding box.center)},
        ]
        \clip (-1.2,-1.2) rectangle + (6.4,2.4);
        \input{diagrams/r2ab-proof1}\end{tikzpicture}
    }
\newcommand{\rIIbaProofOne}{
    \begin{tikzpicture}[scale=0.3,
                baseline={([yshift=-\the\dimexpr\fontdimen22\textfont2\relax]
                    current bounding box.center)},
        ]
        \clip (-1.2,-1.2) rectangle + (6.4,2.4);
        \input{diagrams/r2ba-proof1}\end{tikzpicture}
    }
\newcommand{\rIIabProofTwo}{
    \begin{tikzpicture}[scale=0.3,
                baseline={([yshift=-\the\dimexpr\fontdimen22\textfont2\relax]
                    current bounding box.center)},
        ]
        \clip (-0.2,-2.2) rectangle + (4.4,4.4);
        \input{diagrams/r2ab-proof2}\end{tikzpicture}
    }
\newcommand{\rIIbaProofTwo}{
    \begin{tikzpicture}[scale=0.3,
                baseline={([yshift=-\the\dimexpr\fontdimen22\textfont2\relax]
                    current bounding box.center)},
        ]
        \clip (-0.2,-2.2) rectangle + (4.4,4.4);
        \input{diagrams/r2ba-proof2}\end{tikzpicture}
    }
\newcommand{\pictureHangingPuzzle}{
    \begin{tikzpicture}[scale=0.5,
                baseline={([yshift=-\the\dimexpr\fontdimen22\textfont2\relax]
                    current bounding box.center)},
        ]
        \clip (-2.2,-2.2) rectangle + (5.4,4.4);
        \input{diagrams/pictureHangingPuzzle}\end{tikzpicture}
    }
\theoremstyle{definition}
\declaretheorem[name=Definition,style=definition,numberwithin=section]{definition}
\declaretheorem[name=Example,style=definition,numberlike=definition]{example}
\theoremstyle{plain}
\declaretheorem[name=Lemma,style=plain,numberlike=definition]{lemma}
\declaretheorem[name=Theorem,style=plain,numberlike=definition]{theorem}
\declaretheorem[name=Corollary,style=plain,numberlike=definition]{corollary}
\declaretheorem[name=Proposition,style=plain,numberlike=definition]{proposition}
\theoremstyle{remark}
\declaretheorem[name=Remark,style=remark,numberlike=definition]{remark}
\newcommand*{\defeq}{\mathrel{\vcenter{\baselineskip0.5ex \lineskiplimit0pt
\hbox{\scriptsize.}\hbox{\scriptsize.}}}%
=}
\title{Khovanov homology for links in thickened multipunctured disks}
\author[Z. Winkeler]{Zachary Winkeler}
\date{}
\address {Department of Mathematics, Dartmouth College\\ Hanover, NH 03755}
\email {zachary.j.winkeler.gr@dartmouth.edu}
\urladdr{\href{http://math.dartmouth.edu/~zwinkeler}{http://math.dartmouth.edu/~zwinkeler}}
\DeclareMathOperator{\g}{g}
\DeclareMathOperator{\APS}{APS}
\DeclareMathOperator{\writhe}{wr}
\newcommand{\wt}[1]{\widetilde{#1}}
\DeclareMathOperator{\MKh}{MKh}
\newcommand{\join}{\vee}
\newcommand{\meet}{\wedge}
\DeclareMathOperator{\lowersets}{Lower}
\DeclareMathOperator{\subsets}{Sub}
\DeclareMathOperator{\submodules}{Sub}
\newcommand{\bigjoin}{\bigvee}
\begin{document}

\maketitle

\begin{abstract}
    We define a variant of Khovanov homology for links in thickened disks with multiple punctures. This theory is distinct from the one previously defined by Asaeda, Przytycki, and Sikora, but is related to it by a spectral sequence. Additionally, we show that there are spectral sequences induced by embeddings of thickened surfaces, which recover the spectral sequence from annular Khovanov homology to Khovanov homology as a special case.
\end{abstract}

%%%%%%%%%%%%%%%%%%%%%%%%%%%%%%%%%%%%%%%%%%%%%%%%%%%%%%%%%%%%%%%%%%%%%%%%%%%%%%%%

\section{Introduction}

In \cite{khovanov2000}, Khovanov introduced a bi-graded homology theory $\Kh(L)$ for links $L$ in $S^3$, categoryifing the Jones polynomial.  Later in \cite{asaeda2004}, Asaeda, Przytycki, and Sikora (APS) defined a triply-graded homology theory $\APS(L)$ for band links in any $\I$-bundle over a surface, categorifying the Kauffman bracket skein module. The latter homology theory can be thought of as a generalization of the former, as it reduces to Khovanov's homology (after some grading adjustments) for links in the trivial $\I$-bundle over a disk (identified with $\R^3$). On the decategorified level, the Kauffman bracket skein module of $S^3$ reduces to the Jones polynomial (again, after a suitable ``renormalization'').

The solid torus $S^1 \cross D^2$ has the structure of an $\I$-bundle when thought of as a thickened annulus $A \cross \I$; in \cite{roberts2013}, it was shown that the specialization of APS homology for such links (sometimes called annular Khovanov homology, denoted $\AKh(L)$) can be obtained via a filtration on the Khovanov complex. This theory, and annular links in general, is interesting for a variety of reasons. It has given rise to invariants of braids \cite{grigsby2016} and braid conjugacy classes \cite{hubbard2016}, and has been shown to distinguish braids from other tangles \cite{grigsby2014}. Annnular Khovanov homology has a natural $\mathfrak{sl}_2$ action \cite{grigsby2017}, which generalizes to an $\mathfrak{sl}_n$ action on Khovanov-Rozansky homology \cite{queffelec2015}. One can define a Lee-type deformation of $\AKh(L)$ as well \cite{grigsby2016}.

In this paper, we use a generalized notion of filtration over an arbitrary poset. This construction has been studied before, for example in \cite{matschke2013} and \cite{guidolin2021}. We use these generalized filtrations to define a homology theory $\MKh(L)$ for links in thickened disks with multiple punctures. Specifically, given a link diagram $D$ for a link $L$ in a thickened $n$-punctured disk $\Sigma \cross \I$, we construct a $\Z^n$-filtration on the Khovanov complex $\CKh(D)$. This filtration essentially records the annular grading of a generator with respect to each of the $n$ punctures. We then define $\MKh(L)$ to be the homology of the associated graded complex. This homology is naturally triply-graded, as it inherits the homological and quantum gradings from Khovanov homology, as well as the multipunctured grading used to construct the filtration. Given a homological grading $i \in \Z$, a quantum grading $j \in \Z$, and a multipunctured grading $v \in \Z^n$, we denote the corresponding graded piece $\MKh^{i,j,v}(L)$. Therefore, the whole homology decomposes as:

\begin{equation*}
    \MKh(L) = \bigoplus_{i,j,v} \MKh^{i,j,v}(L)
\end{equation*}

In \cref{sec:invariance}, we prove that this is in fact an invariant:

\begin{theorem}
\label{thm:mkh-invariance}
    Let $L \subset \Sigma \cross \I$ be a link in a thickened $n$-punctured disk, and let $\field$ be a commutative ring. Then $\MKh(L;\field)$ is an invariant of such links, as a triply-graded $\field$-module. In fact, for any diagram $D$ of $L$, we get that the filtered chain homotopy type of $\CKh(D;\field)$ is an invariant of $L$.
\end{theorem}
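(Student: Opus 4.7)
The plan is to prove the stronger statement first---that the filtered chain homotopy type of $\CKh(D;\field)$ is an $L$-invariant---since the triply-graded invariance of $\MKh(L;\field)$ then follows by passing to the associated graded complex. Following the classical strategy, I would verify invariance one Reidemeister move at a time, but working throughout with the explicit local chain maps and chain homotopies (e.g.\ the Bar-Natan saddle/birth/death maps) that realize the standard Khovanov equivalences $\CKh(D) \simeq \CKh(D')$. The task is then to show that each of these local maps, call them $f, g, h, h'$, is filtered with respect to the $\Z^n$-filtration.

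The first step is a local-to-global reduction: each Reidemeister move takes place in a small disk $U \subset \Sigma$, and after an ambient isotopy I may assume that $U$ is disjoint from every puncture $p_1, \ldots, p_n$. This has two consequences. Any resolution circle contained entirely in $U$ is nullhomotopic in $\Sigma$ and contributes $0$ to every coordinate of the multipunctured grading. Any circle that meets $U$ has its homology class in $H_1(\Sigma \setminus \{p_k\})$ determined by its restriction to $\Sigma \setminus U$, which is unaffected by the move. So the only filtration-shifts that could occur come from the local saddle, birth, and death maps acting on arcs inside $U$.

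Next I would check filtration-preservation one puncture at a time. For each fixed $k$, the $k$-th coordinate of the multipunctured filtration is exactly the annular grading with respect to $p_k$, recording (with signs from the labeling) the number of essential circles around $p_k$. Since $U$ is disjoint from $p_k$, the local saddles and births/deaths appearing in the Reidemeister chain maps and homotopies interact with circles around $p_k$ in precisely the same way as in the single-puncture (annular) case analyzed in \cite{roberts2013}. Applying that argument puncture by puncture shows that each of $f, g, h, h'$ is filtered in every coordinate of $\Z^n$. Because the multipunctured filtration is the componentwise product of the $n$ annular filtrations, a map filtered in each coordinate separately is filtered with respect to the product, giving a filtered chain homotopy equivalence $\CKh(D) \simeq \CKh(D')$ as desired.

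The main obstacle I expect is the RIII move, whose chain homotopy involves enough terms that explicitly tracking the filtration value of each enhanced state across the saddle maps becomes somewhat lengthy; the RI and RII cases should be more straightforward. Conceptually, however, no genuinely new phenomenon arises from having multiple punctures: the argument for each $p_k$ is formally identical to the annular one, and the product-filtration structure reduces the multipunctured verification to $n$ independent copies of the known annular check.
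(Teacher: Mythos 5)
Your proposal is correct in outline, but it takes a genuinely different route from the paper. The paper follows Bar-Natan's acyclic-subcomplex argument: for each Reidemeister move it quotients by acyclic subcomplexes, uses a labeling of the regions of the local picture (together with the fact that the regions swept by the move contain no punctures) to show that these quotient maps and the resulting identifications are filtered and that the associated graded subcomplexes stay acyclic, and thereby produces filtered \emph{quasi-isomorphisms} (only a zig-zag of them in the RIII case); it then upgrades filtered quasi-isomorphism to filtered chain homotopy equivalence by a separate homological-algebra step, namely that complexes of filtered projective modules are filtered quasi-isomorphic if and only if they are filtered homotopy equivalent. You instead take Bar-Natan's explicit local chain homotopy equivalences and chain homotopies and check that every component map is filtered: since each component is a linear combination of compositions of births, deaths, and saddles supported in a puncture-free disk $U$, each is non-increasing in every annular coordinate, and your coordinatewise observation (which the paper itself uses for the differential in \cref{lemma:proof-filtered-differential}) upgrades this to filteredness for the $\Z^n$-filtration. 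Your route delivers a filtered homotopy equivalence directly, with no zig-zag and no appeal to filtered projectivity, and it makes the commutative-ring case transparent; the paper's route avoids ever writing down the RIII homotopies, at the cost of that extra upgrading step.

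One caveat to address before this is complete: the appeal to \cite{roberts2013} is doing more work in your sketch than that reference supports. What is used there (and in this paper) is that the \emph{differential} respects the annular filtration; the filteredness of the Reidemeister chain maps and, especially, of the chain homotopies is precisely the new content you need, so you should verify it rather than cite it. The verification is the elementary-cobordism case analysis implicit in your second paragraph: births and deaths in $U$ involve nullhomotopic circles and preserve the grading, while merges and splits never raise it --- including the nested case, where the class of the merged circle is a \emph{difference} of the two classes, so this deserves an explicit line. With that check written out, and the remark that a filtered homotopy equivalence induces a homotopy equivalence of associated graded complexes (so invariance of $\MKh(L;\field)$ as a triply-graded module follows over any commutative ring $\field$), your argument is sound.
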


While APS homology is already defined for band links in thickened surfaces, our theory differs in that it is defined for unframed links, and is graded by $H_1(\Sigma; \Z)$, the first integral homology group of the surface instead of by $\Z C(\Sigma)$, the free abelian group on homotopy classes of simple closed curves. Additionally, since $\MKh(L)$ is computed from a filtered chain complex, we have access to more filtration-related structure.

Despite their differences, we still have a relationship between $\APS(L)$ and $\MKh(L)$. In order to compare the two, we define a homology $\wt{\APS}(L)$ for links in thickened $n$-punctured disks, which is related to $\APS(L)$ by a change in gradings, and is an invariant of (unframed) links. This invariant is also a triply-graded module; here, $i \in \Z$ is the homological grading, $j \in \Z$ is the quantum grading, and $v\in \Z C(\Sigma)$ is the negative of the $\Psi$-grading from \cite{asaeda2004} (analogous to our multipunctured grading).

\begin{equation*}
    \wt{\APS}(L) = \bigoplus \wt{\APS}^{i,j,v}(L)
\end{equation*}

Define the graded Euler characteristics of $\wt{\APS}(L)$ and $\MKh(L)$ as

\begin{align*}
    \chi(\wt{\APS}(L)) &\defeq \sum_{i,j,v} (-1)^i q^j x^v \rk (\wt{\APS}^{i,j,v}(L)) \\
    \chi(\MKh(L)) &\defeq \sum_{i,j,v} (-1)^i q^j y^v \rk (H^{i,j,v}(L))
\end{align*}

Here, $x^v$ and $y^v$ are shorthand notations for certain products of variables $x_c$ and $y_c$, one variable for each basis element of $\Z C(\Sigma)$ and $H_1(\Sigma; \Z)$, respectively. See \cref{sec:aps-relationship} for details.

We then get that these two Laurent polynomials are related by a Hurewicz-like map $h: \Z C(\Sigma) \to H_1(\Sigma; \Z)$.

\begin{theorem}
\label{thm:euler-characteristic}
The map $h$ induces a ring map $\chi_h: \Z[q^{\pm 1},x^{\pm 1}] \to \Z[q^{\pm 1},y^{\pm 1}]$, which sends $\chi(\wt{\APS}(L))$ to $\chi(MKh(L))$.
\end{theorem}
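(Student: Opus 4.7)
The plan is to verify the identity at the chain level, exploiting the facts that Euler characteristic is preserved by taking homology and by passing to the associated graded of a filtration. Both the chain complex computing $\wt{\APS}(L)$ and the associated graded of the filtered complex computing $\MKh(L)$ are built from the same underlying set of Khovanov-style generators (a Kauffman state $s$ of a diagram $D$ together with an $a/b$-labeling of its circles), but carry different $v$-gradings: one valued in $\Z C(\Sigma)$ and the other in $H_1(\Sigma; \Z) \cong \Z^n$.

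First I would define $\chi_h$ explicitly by $q \mapsto q$ and, for each generator $c \in C(\Sigma)$, $x_c \mapsto y^{h(c)}$, extending multiplicatively. Because $h$ is a group homomorphism from $\Z C(\Sigma)$ to $H_1(\Sigma; \Z)$, the assignment $x^v \mapsto y^{h(v)}$ respects the product $x^v \cdot x^{v'} = x^{v+v'}$, so $\chi_h$ is a well-defined ring map.

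Next I would establish the key chain-level compatibility: for every Khovanov-style generator $g$, one has $v_{\MKh}(g) = h(v_{\wt{\APS}}(g))$ in $\Z^n$. The $\wt{\APS}$-grading of $g$ is, up to the sign flip relating it to the $\Psi$-grading of \cite{asaeda2004}, a signed sum of the homotopy classes of the essential circles in $s$, with signs determined by their $a/b$ labels. The $\MKh$-filtration degree of $g$ records, coordinate by coordinate, the analogous signed annular contribution around each of the $n$ punctures. Since the Hurewicz image of a simple closed curve in the $n$-punctured disk is precisely its vector of winding numbers about the punctures, the two gradings agree after applying $h$. With this in hand,
\begin{equation*}
    \chi(\wt{\APS}(L)) = \sum_g (-1)^{i(g)} q^{j(g)} x^{v_{\wt{\APS}}(g)}, \qquad \chi(\MKh(L)) = \sum_g (-1)^{i(g)} q^{j(g)} y^{v_{\MKh}(g)},
\end{equation*}
where both sums range over the generators of any fixed diagram $D$ of $L$; applying $\chi_h$ to the first sum and substituting $v_{\MKh}(g) = h(v_{\wt{\APS}}(g))$ produces the second.

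I expect the main obstacle to be precisely the chain-level grading compatibility. Carefully aligning the $\Psi$-grading of \cite{asaeda2004}, its sign reversal to $v_{\wt{\APS}}$, and the $\Z^n$-filtration constructed in \cref{sec:invariance} requires pinning down sign conventions on all sides; once that bookkeeping is settled, the rest of the argument is a formal manipulation of Euler characteristics.
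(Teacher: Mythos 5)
Your proposal is correct and follows essentially the same route as the paper: the paper defines $\chi_h$ by $q \mapsto q$, $x_c \mapsto y_{[c]}$ and observes that the identity holds because both homologies arise from gradings on the same underlying complex $\CKh(D)$, which is exactly the chain-level compatibility $\g^\Sigma(g) = h(\Phi(g))$ you verify before cancelling Euler characteristics against the grading-preserving associated-graded differentials. Your write-up simply makes explicit the bookkeeping the paper leaves implicit.
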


We then construct a spectral sequence from $\wt{\APS}(L)$ to $\MKh(L)$. While our previous constructions use filtrations and gradings over $\Z^n$, we first need to ``flatten'' these to $\Z$-filtrations and $\Z$-gradings to make use of the conventional machinery of spectral sequences associated to filtered chain complexes.

\begin{theorem}
\label{thm:aps-spectral-sequence}
Let $\Sigma$ be a disk with $n$ punctures, and let $L$ be a link in $\Sigma \cross \I$. Then there is a spectral sequence with $E_1$ page isomorphic to $\wt{\APS}(L)$ that converges to $\MKh(L)$ (with flattened gradings).
\end{theorem}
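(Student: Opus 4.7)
The plan is to realize $\wt{\APS}(L)$ as the $E_1$ page of the spectral sequence of a $\Z$-filtration on the complex whose homology is $\MKh(L)$. First, I would identify both invariants with homologies of associated gradeds of two nested filtrations on the same Khovanov complex $\CKh(D)$. By construction, $\MKh(L)$ is the homology of the associated graded of the $\Z^n$-filtration $G$ on $\CKh(D)$ used in \cref{sec:invariance}. Analogously, $\wt{\APS}(L)$ is the homology of the associated graded of a finer $\Z C(\Sigma)$-filtration $F$ on $\CKh(D)$ recording the isotopy classes of the resolution curves. The Hurewicz-type map $h$ from \cref{thm:euler-characteristic} sends the $F$-grading of each generator to its $G$-grading, from which one reads off that $F$ refines $G$.

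Next, to apply the standard spectral sequence machinery for $\Z$-filtrations, I would flatten both filtrations compatibly using a linear functional $\ell: \Z^n \to \Z$ (for example, the sum of coordinates, with signs chosen so the resulting $\Z$-filtrations are bounded in each homological--quantum bidegree), and then flatten $F$ via $\ell \circ h$. This compatibility preserves the refinement: the flattened fine filtration $\tilde F$ refines the flattened coarse filtration $\tilde G$, and the respective associated gradeds still compute $\wt{\APS}(L)$ and $\MKh(L)$ with $v$-gradings aggregated along fibers of $\ell$, which is what the theorem statement means by "flattened gradings".

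Finally, I would apply the spectral sequence of a filtered complex to the filtration induced by $\tilde F$ on the complex $\operatorname{gr}_{\tilde G}\CKh(D)$. Because $\tilde F$ refines $\tilde G$, the iterated associated graded collapses to a single one: $\operatorname{gr}_{\tilde F}\operatorname{gr}_{\tilde G}\CKh(D) \cong \operatorname{gr}_{\tilde F}\CKh(D)$ as chain complexes, since the pieces of the Khovanov differential that survive two successive diagonal truncations are exactly those surviving one truncation by the finer filtration. Thus $E_1 \cong H_*(\operatorname{gr}_{\tilde F}\CKh(D)) = \wt{\APS}(L)$, and the sequence converges to $H_*(\operatorname{gr}_{\tilde G}\CKh(D)) = \MKh(L)$. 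Convergence is strong because $\CKh(D)$ is finite-dimensional in each bidegree, making the induced filtration on $\operatorname{gr}_{\tilde G}\CKh(D)$ bounded in each bidegree.

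The main obstacle I expect is bookkeeping rather than any deep homological issue: checking that $\ell$ can be chosen so that both $\tilde F$ and $\tilde G$ are bounded in each homological--quantum bidegree (guaranteeing strong convergence), and that the identification $\operatorname{gr}_{\tilde F}\operatorname{gr}_{\tilde G}\CKh(D) \cong \operatorname{gr}_{\tilde F}\CKh(D)$ matches the remaining gradings on the nose, so that the $E_1$ page is $\wt{\APS}(L)$ without spurious shifts in the quantum or multipunctured direction.
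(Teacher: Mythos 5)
Your overall architecture matches the paper's: view $\wt{\APS}(L)$ and $\MKh(L)$ as homologies of associated graded complexes of two filtrations on the same $\CKh(D)$, flatten to $\Z$-filtrations, filter the complex computing $\MKh$ by the flattened APS filtration, and identify the iterated associated graded with the single one to get $E_1 \iso \wt{\APS}(L)$. However, there is a genuine gap in how you flatten the APS filtration. You propose flattening $\Phi$ by $\ell \circ h$, where $h: \Z C(\Sigma) \to H_1(\Sigma;\Z)$ is the Hurewicz-type map and $\ell$ is the coordinate sum. But $h(\Phi(x)) = \g^\Sigma(x)$ for every enhanced state $x$, so $\ell \circ h \circ \Phi = \ell \circ \g^\Sigma$: your ``flattened fine filtration'' $\tilde F$ is literally equal to the flattened coarse filtration $\tilde G$, not a refinement of it. Consequently $\operatorname{gr}_{\tilde F}\CKh(D)$ is the associated graded of the flattened $\MKh$ filtration, whose homology is not $\wt{\APS}(L)$ in general, so the identification of the $E_1$ page fails. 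The paper instead flattens $\Phi$ by the map $\epsilon$ that sums the coefficients in $\Z C(\Sigma)$ itself, so an essential circle counts $\pm 1$ regardless of how many punctures it encloses; this is genuinely different from $\ell \circ \g^\Sigma$. A concrete witness: a merge of two $+$-labeled essential circles, one around puncture $1$ and one around puncture $2$, into a single circle around both sends $\g^\Sigma = v_1+v_2$ to $v_1+v_2$ (so it survives in $\operatorname{gr}_{\tilde G}$ and in your $\operatorname{gr}_{\tilde F}$), but it drops $\epsilon \circ \Phi$ from $2$ to $1$ and changes $\Phi$, so it is killed in the APS associated graded.

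A second, related point: once the flattening is corrected, the key isomorphism $\operatorname{gr}^{\Phi}(\operatorname{gr}^{\Sigma}\CKh(D)) \iso \operatorname{gr}^{\Phi}\CKh(D)$ is not a formal consequence of ``refinement,'' because $\epsilon\circ\Phi$ and $\ell\circ\g^\Sigma$ are incomparable gradings (the example above preserves the latter while strictly decreasing the former). The paper proves it (\cref{lemma:aps-associated-graded}, feeding into \cref{lemma:two-filtrations-spectral-sequence}) by the case analysis of merges and splits involving trivial and essential circles from \cref{lemma:aps-filtered}: whenever a component of the differential preserves $\epsilon\circ\Phi$, the circles created or destroyed are trivial or represent the same class in $C(\Sigma)$, hence the component preserves $\Phi$ and therefore also $\g^\Sigma$ and its flattening. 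Your proposal asserts the collapse of the iterated associated graded ``since the pieces of the Khovanov differential that survive two successive diagonal truncations are exactly those surviving one truncation by the finer filtration,'' but this is exactly the statement that needs the geometric case analysis; as written, and with your choice of flattening, it does not hold. Your remarks on boundedness and strong convergence are fine and match the paper's implicit use of finite-dimensionality.
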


On the other hand, our construction is in some ways like a generalization of annular Khovanov homology\footnote{Which therefore makes our theory a generalization of a specialization of a generalization of Khovanov homology.}. We identify $\AKh(L)$ as a special case of $\MKh(L)$, and construct a spectral sequence from the latter to the former:

\begin{theorem}
\label{thm:akh-spectral-sequence}
Let $\Sigma$ be a disk with $n > 1$ punctures, and let $L$ be a link in $\Sigma \cross \I$. Choose a puncture $p$, and let $A = \D^2 \setminus\set{p}$ be the annulus obtained from $\Sigma$ by filling in every puncture \textit{except $p$}. Then there is a spectral sequence with $E_1$ page isomorphic to $\MKh(L \subset \Sigma \cross \I)$ (with flattened grading) that converges to $\AKh(L \subset A\cross \I)$.
\end{theorem}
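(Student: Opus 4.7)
The plan is to exploit the fact that the Khovanov complex $\CKh(D)$ is independent of the surface into which $L$ is embedded; what changes when we pass between $L \subset \Sigma \cross \I$ and $L \subset A \cross \I$ is only the filtration structure on $\CKh(D)$. Since $\MKh$ and $\AKh$ both arise as associated-graded homologies of filtrations on the same underlying complex, and since filling in punctures coarsens the filtration, a spectral sequence should arise from this refinement.

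First, I would verify that the $p$-annular $\Z$-filtration $F$ on $\CKh(D)$ (viewed as a complex for $L \subset A \cross \I$) is the pushforward of the multipunctured $\Z^n$-filtration $G$ (viewed for $L \subset \Sigma \cross \I$) along the map $\pi \colon H_1(\Sigma; \Z) \to H_1(A; \Z)$ induced by the inclusion $\Sigma \hookrightarrow A$. This map sends the class of each small loop around a filled-in puncture to zero, and the class of a small loop around $p$ to the generator, so it is projection onto the $p$-th coordinate. Because the annular homology class of any circle in $A$ equals the image under $\pi$ of its homology class in $\Sigma$, the $p$-annular grading of a labeled resolution equals the $p$-th coordinate of its multipunctured grading, and hence $F_k = \sum_{v \in \Z^n,\ v_p \le k} G_v \CKh(D)$.

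Next, consider the associated graded $\mathrm{gr}^F \CKh(D)$. By Roberts's construction, its homology is $\AKh(L \subset A \cross \I)$. The filtration $G$ descends to a $\Z^{n-1}$-filtration on $\mathrm{gr}^F \CKh(D)$, since within each piece $F_k / F_{k-1}$ the multipunctured grading varies over $\{v \in \Z^n : v_p = k\} \cong \Z^{n-1}$. A direct check shows that the associated graded of this inherited filtration recovers $\mathrm{gr}^G \CKh(D)$ as a bigraded chain complex, whose homology is $\MKh(L \subset \Sigma \cross \I)$. Flattening the $\Z^{n-1}$-filtration on $\mathrm{gr}^F \CKh(D)$ to a $\Z$-filtration via the same convention used in \cref{thm:aps-spectral-sequence} and applying the standard spectral sequence of a bounded $\Z$-filtered chain complex then produces an $E_1$ page isomorphic to $\MKh(L \subset \Sigma \cross \I)$ with its flattened grading, converging to the total homology of $\mathrm{gr}^F \CKh(D)$, namely $\AKh(L \subset A \cross \I)$.

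The main obstacle is verifying carefully that iterating the associated-graded construction truly gives $\mathrm{gr}^G(\mathrm{gr}^F \CKh(D)) \cong \mathrm{gr}^G \CKh(D)$ with matching differentials. This relies on the original differential shifting the multipunctured grading only in ways compatible with the product order on $\Z^n$, so that passing through both filtrations picks out exactly the grading-preserving part of $d$. A secondary technical point is aligning the flattening conventions so that the identification $E_1 \cong \MKh(L)$ respects gradings in the precise sense stated in the theorem.
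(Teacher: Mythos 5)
Your argument is correct and is essentially the paper's: the paper obtains this statement as the $\Sigma'=A$ special case of \cref{thm:spectral-sequence} together with \cref{lemma:annular-khovanov}, and that general proof is exactly your mechanism (two flattened filtrations on the same complex $\CKh(D)$, the iterated associated-graded identification of \cref{lemma:associated-graded}, and the filtered-complex spectral sequence of \cref{lemma:two-filtrations-spectral-sequence}). The only cosmetic deviation is that you filter the annular associated graded complex by the flattened sum of the non-$p$ coordinates instead of by the full flattened grading $\epsilon\comp\g^\Sigma$; since the annular degree is constant on each summand of that associated graded complex, the two filtrations differ there only by a shift, giving the same spectral sequence up to reindexing.
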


We also recover the well-known spectral sequence from $\AKh(L)$ to $\Kh(L)$, as well as a spectral sequence from $\MKh(L)$ to $\Kh(L)$:

\begin{theorem}
\label{thm:mkh-spectral-sequence}
For any link $L$ in an $n$-punctured disk $\Sigma$, there is a spectral sequence with $E_1$ page isomorphic to $\MKh(L \subset \Sigma \cross \I)$ (with flattened grading) that converges to $\Kh(L)$.
\end{theorem}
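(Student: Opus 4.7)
The plan is to obtain the spectral sequence as the standard one associated to a bounded $\Z$-filtered chain complex, applied to the filtered complex $\CKh(D)$ constructed earlier in the paper. By construction, $\MKh(L)$ is the homology of the associated graded of the $\Z^n$-filtration on $\CKh(D)$, and forgetting the filtration altogether gives back the ordinary Khovanov complex, whose homology is $\Kh(L)$. So morally there is nothing to do besides flattening and invoking a well-known theorem; the only real content is making sure the gradings and convergence line up.

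First, I would flatten the $\Z^n$-filtration to a $\Z$-filtration via a suitable linear functional $\Z^n \to \Z$ (summing coordinates is the natural choice, matching the convention implicit in the phrase ``flattened grading'' that also appears in \cref{thm:aps-spectral-sequence,thm:akh-spectral-sequence}). Since $\CKh(D)$ is finite dimensional for any finite link diagram $D$, the induced $\Z$-filtration is bounded, so the standard spectral sequence of a filtered chain complex is well-defined and strongly convergent.

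Next, I would identify the pages. The $E_0$ page of this spectral sequence is by definition the associated graded complex of the filtration; its homology, the $E_1$ page, is by definition $\MKh(L)$ with flattened grading. The spectral sequence converges to the homology of the total complex $\CKh(D)$, which is nothing but $\Kh(L)$, since filling in all $n$ punctures sends $L \subset \Sigma \times \I$ to the same link $L \subset \D^2 \times \I \cong \R^3$ and the underlying chain complex is unchanged. Thus we get $E_1 \cong \MKh(L) \Rightarrow \Kh(L)$, as desired.

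Finally, I would note that invariance of the spectral sequence (beyond the $E_1$ page, which is already an invariant by \cref{thm:mkh-invariance}) follows from the stronger statement in that same theorem, namely that the filtered chain homotopy type of $\CKh(D)$ is itself a link invariant; any filtered chain homotopy equivalence induces an isomorphism of the associated spectral sequences from $E_1$ onward. The main (mild) obstacle is bookkeeping: verifying that the differentials in $\CKh(D)$ respect the flattened $\Z$-filtration, which reduces to the fact, used in the construction of $\MKh$, that each Khovanov differential is non-increasing with respect to each coordinate of the $\Z^n$-filtration and hence also with respect to any sum of coordinates.
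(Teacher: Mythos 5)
This is correct and in substance identical to the paper's argument: the paper obtains the result by citing \cref{thm:spectral-sequence} with $\Sigma' = \D$ a disk together with \cref{lemma:khovanov} (which gives $\MKh(L \subset \D \cross \I) \iso \Kh(L)$), and since $\g^{\D}$ is trivial that specialization unwinds to exactly your direct construction, namely the standard spectral sequence of the flattened $\Z$-filtration on $\CKh(D)$. The only imprecision is the phrase ``by definition'' in your identification of the $E_1$ page: the associated graded of the flattened filtration could a priori carry extra differential components (those shifting the $\Z^n$-grading by a nonzero vector whose coordinates sum to zero), and it agrees with the flattening of $\gr^{\Sigma}(\CKh(D))$ precisely because the differential is non-increasing in each coordinate of $\Z^n$ --- the very fact you cite in your last paragraph --- so your argument is complete once that observation is attached to the $E_1$ identification and not only to the check that the flattened filtration is respected.
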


\cref{thm:akh-spectral-sequence} and \cref{thm:mkh-spectral-sequence} are special cases of a more general relationship between links that are related by thickened surface embeddings.

\begin{theorem}
\label{thm:spectral-sequence}
Let $\Sigma$ be a disk with $n$ punctures, and let $\Sigma'$ be a disk with a subset of those $n$ punctures. Let $L \subset \Sigma \subset \Sigma'$ be a link. Then there is a spectral sequence with $E_1$ page isomorphic to $\MKh(L \subset \Sigma \cross \I)$ that converges to $\MKh(L \subset \Sigma' \cross \I)$ (with flattened gradings).
\end{theorem}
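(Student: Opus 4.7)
The plan is to realize the spectral sequence as the one associated to a natural filtration on the chain complex whose homology computes $\MKh(L \subset \Sigma' \cross \I)$.

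First, I would relate the two multipunctured filtrations on $\CKh(D)$. Since $\Sigma'$ is obtained from $\Sigma$ by filling in the $n - n'$ punctures of $\Sigma$ not present in $\Sigma'$, the $\Z^{n'}$-multipunctured grading with respect to $\Sigma'$ is the projection of the $\Z^n$-multipunctured grading with respect to $\Sigma$ along the coordinate projection $\pi \colon \Z^n \to \Z^{n'}$ that retains the coordinates of the surviving punctures. Writing $F^\Sigma$ and $F^{\Sigma'}$ for the respective filtrations on $\CKh(D)$, this translates to $F^{\Sigma'}_w = \sum_{\pi(v) \leq w} F^\Sigma_v$; in particular, $F^\Sigma$ is a refinement of $F^{\Sigma'}$.

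Next, I would set up the filtered chain complex. Because $F^\Sigma$ refines $F^{\Sigma'}$, it restricts to and descends to an induced filtration $\hat F$ on the associated graded complex of $F^{\Sigma'}$, whose homology is $\MKh(L \subset \Sigma' \cross \I)$. A direct check---essentially the standard ``graded-of-a-graded'' identity for nested filtrations---shows that the associated graded of $\hat F$ is canonically identified with the associated graded of $F^\Sigma$, including the restricted Khovanov differential.

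The theorem then follows from the standard spectral sequence of a filtered chain complex applied to $\hat F$, flattened to a $\Z$-filtration using the sum over the $n - n'$ discarded coordinates. The $E_1$ page is the homology of the associated graded of $\hat F$, which is $\MKh(L \subset \Sigma \cross \I)$ in its flattened grading, and convergence is to the homology of the underlying complex, which is $\MKh(L \subset \Sigma' \cross \I)$ in its flattened grading. Convergence is automatic because $\CKh(D)$ is finitely generated, so all filtrations in sight are bounded.

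The main subtlety I expect is the flattening bookkeeping: verifying that the $\Z$-grading induced on $E_1$ by flattening $\hat F$ matches the paper's flattened grading on $\MKh(\Sigma)$, and likewise for the abutment and $\MKh(\Sigma')$. Since on each $F^{\Sigma'}$-graded piece the sum over all $n$ coordinates differs from the sum over the $n - n'$ discarded coordinates only by a piece-dependent shift, the two notions should agree up to a reindexing. In any case, this bookkeeping is handled cleanly by the poset-generalized filtration framework the paper adopts (see \cite{matschke2013} and \cite{guidolin2021}), which I would invoke to handle the multi-indexed filtrations without reinventing the iterated spectral sequence setup.
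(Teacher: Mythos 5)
Your overall strategy is the same as the paper's: both filtrations live on the single complex $\CKh(D)$, one filters the $\Sigma'$-associated-graded complex by (a flattening of) the $\Sigma$-filtration, identifies the resulting associated graded with $\gr^\Sigma(\CKh(D))$, and then runs the standard spectral sequence of a bounded filtered complex (this is the content of \cref{lemma:two-filtrations} and \cref{lemma:two-filtrations-spectral-sequence}). The gap is in the step you call ``a direct check---essentially the standard graded-of-a-graded identity for nested filtrations.'' At the chain level this identity is \emph{not} formal once flattened gradings are involved, and refinement of filtrations alone does not give it. Concretely, a component of the Khovanov differential could a priori preserve the flattened sum you filter by while raising the winding grading around one puncture and lowering it around another; such a component would survive in one of the two associated graded complexes you are comparing but not in the other, so the differentials need not agree. (In the unflattened $\Z^n$-versus-$\Z^{n'}$ picture the identity is formal, since $\g^{\Sigma'}$ is a coordinate projection of $\g^\Sigma$; it is precisely the flattening needed to invoke the classical spectral sequence machinery---whether by the total sum, as in the paper, or by the sum over discarded coordinates, as in your proposal---that destroys this formality.)

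The missing input is geometric, not bookkeeping: the Khovanov differential never raises the annular grading with respect to any single puncture (\cref{lemma:proof-filtered-differential}, via Roberts), so a component preserving the relevant sum must preserve each coordinate separately. The paper isolates exactly this point as the hypothesis of \cref{lemma:associated-graded} (if $\g^1(y)=\g^1(x)$ then $\g^2(y)=\g^2(x)$ for $y$ appearing in $\diff(x)$) and verifies it in the proof of the theorem via the inequality $(\epsilon\comp\g^\Sigma)(y)-(\epsilon\comp\g^\Sigma)(x) \le (\epsilon\comp\g^{\Sigma'})(y)-(\epsilon\comp\g^{\Sigma'})(x) \le 0$. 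Once you insert this per-puncture monotonicity argument to justify your chain-level identification, the rest of your proposal (boundedness, convergence to $\MKh(L\subset\Sigma'\cross\I)$ up to the regrading by the induced filtration) goes through and coincides with the paper's proof.
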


In \cite{queffelec2018}, Queffelec and Wedrich define a functor from links in thickened surfaces to a category of foams which allows one to recover $\Kh(L)$, $\AKh(L)$, and $\APS(L)$, among other link invariants. They also exhibit similar spectral sequences corresponding to surface embeddings. It seems likely that $\MKh(L)$ has some relationship with this construction.

In another direction, $\MKh(L)$ may be useful for extracting information about braids from $\AKh(L)$. Stabilization and destabilization of braids can't be realized by ambient isotopy in $A \cross \I$, but they can be viewed as a sequence of adding and removing punctures to a diagram. These moves would correspond to spectral sequences between various values of $\MKh(L)$;

Since we started out trying to generalize annular Khovanov homology, it's natural to ask if $\MKh(L)$ is any better at distinguishing knots in thickened multipunctured disks. It turns out that it is, i.e.\ we can find knots $L,L' \subset \Sigma \cross \I$ that have isomorphic $\AKh(L \subset A \cross \I)$ around each puncture individually, but different $\MKh(L \subset \Sigma \cross \I)$. We give one such example in \cref{sec:further-examples}.

\subsection*{Organization}

In \cref{sec:background}, we establish our notation and conventions while giving some background information on Khovanov homology and its annular version. In \cref{sec:poset-filtrations}, we discuss some facts about filtrations by posets. We define our main invariant $\MKh(L)$ in \cref{sec:mkh}, then discuss the natural spectral sequences it comes equipped with in \cref{sec:spectral-sequences}. In \cref{sec:relationships}, we relate $\MKh(L)$ with Khovanov homology, annular Khovanov homology, and APS homology. In \cref{sec:invariance}, we prove that $\MKh(L)$ is in fact a link invariant. Finally, we give an example to demonstrate how $\MKh(L)$ can be non-trivial even when $\AKh(L)$ is trivial in \cref{sec:further-examples}.

\subsection*{Acknowledgements}

We thank Eli Grigsby for helpful conversation, as well as Ina Petkova for suggesting the project and comments on the draft of this paper. We also would like to thank Robert Lipshitz for helpful advice while revising this paper.

%%%%%%%%%%%%%%%%%%%%%%%%%%%%%%%%%%%%%%%%%%%%%%%%%%%%%%%%%%%%%%%%%%%%%%%%%%%%%%%%

\section{Background}
\label{sec:background}

In this section, we will review some of the prerequisite topics for our main theorems. We start by introducing our conventions for knots and links. Next, we recall the definition of Khovanov homology and work through a simple example. Lastly, we establish some notation we'll use when discussing filtrations and review annular Khovanov homology in anticipation of our more general theory in \cref{sec:mkh}.

\subsection{Links}
For the purposes of this paper, an $n$-component \textbf{link} is a smooth embedding of $\coproduct_n S^1$ into $S^3$, considered up to ambient isotopy. We will represent links via \textbf{diagrams}, which are 4-valent graphs with vertices labeled by over/under crossing data. We think of diagrams as projections of links onto $\R^2$ with no triple points or tangencies. All links are assumed to be \textbf{oriented}, which means that we have chosen one of the two possible orientations for each of the link components. Given an oriented link diagram, we can define a \textbf{positive crossing} as a part of a diagram that locally looks like $\positiveCrossing$, and a \textbf{negative crossing} as a part of a diagram that locally looks like $\negativeCrossing$.

\subsection{Khovanov homology}
\label{sec:khovanov}

First defined in \cite{khovanov2000}, Khovanov homology is an invariant of links in $S^3$. We will give a short description of its definition here; for a more detailed description, see the original paper or Bar-Natan's paper \cite{bar-natan2002}.

We will use two gradings to define this invariant. The first grading is the homological grading $\g^h$; following Bar-Natan's convention, shifts in this grading will be denoted with brackets, i.e.\ $M[i]_n = M_{n-i}$. The second grading is the quantum grading $\g^q$; shifts in this grading will be denoted with curly braces, i.e.\ $M\{i\}_n = M_{n-i}$.

Let $L$ be a link in $\R^3 \subset S^3$. Choose a projection $\R^3 \to \R^2$ so that the image of $L$ is a 4-valent graph $D$ with crossing information at each vertex, i.e.\ a link diagram. Choose an ordering $c_1, \dots, c_n$ of the crossings of $D$. Given a crossing $\crossing$, define its 0-resolution to be $\zeroRes$ and its 1-resolution to be $\oneRes$. If $S \in \set{0,1}^n$ is any element, then define $D(S)$ to be the collection of closed circles in $\R^2$ obtained from $D$ by replacing each crossing $c_i$ with its $S_i$-resolution.

Fix a ring $\field$ (usually a field or $\Z$), and let $V=\field\generatedby{v_+,v_-}$ be the free $\field$-module generated by two elements $v_+$ and $v_-$. We define the quantum gradings of $v_+$ and $v_-$ to be $1$ and $-1$, respectively. To a fully-resolved diagram $D(S)$ with $k$ components, we associate a module $\CKh(D(S)) = V^{\tensor k}$. We think of a generator $x\in V^{\tensor k}$ as representing a labeling of each circle $C_i \in D(S)$ with a sign $s_i \in \set{+,-}$. We call such a labeling $\set{(C_i, s_i)}_i$ an \textbf{enhanced (Kauffman) state}. Let $n_+$ and $n_-$ be the number of positive crossings and negative crossings in $D$, respectively, and let $\writhe(D) = n_+-n_-$ be the \textbf{writhe} of $D$. To the original diagram $D$, we associate a module:
\[
    \CKh(D) \defeq \bigdirectsum_{S \in \set{0,1}^n} \CKh(D(S))[|S|-n_-]\{|S|-n_-+\writhe(D)\}
\]

When we care about the gradings, we will write $\CKh^{h,q}(D)$ to denote the graded piece of $\CKh(D)$ with homological grading $h$ and quantum grading $q$.

We can give $\CKh(D)$ the structure of a chain complex by defining a differential $\diff: \CKh(D) \to \CKh(D)$ on it. Let $S \in \set{0,1}^n$, and let $x \in \CKh(D(S))$ be a generator. Let $S' \in \set{0,1}^n$ be obtained from $S$ by changing a single 0 to a 1. If $D(S)$ has $k$ components, then $D(S')$ has either $k-1$ or $k+1$ components; we call the former a \textbf{merge} and the latter a \textbf{split}. If $S$ and $S'$ are related by a merge, then we can define a map $m: \CKh(D(S)) \iso V^{\tensor k} \to V^{\tensor k-1} \iso \CKh(D(S'))$. On the two tensor factors that are being merged, $m$ is defined as
\[
    m: V \tensor V \to V = 
        \piecewise{
            v_+ \tensor v_+ \mapsto v_+ \\
            v_+ \tensor v_- \mapsto v_- \\
            v_- \tensor v_+ \mapsto v_- \\
            v_- \tensor v_- \mapsto 0
        }
\]
We extend $m$ to all of $\CKh(D(S))$ by setting it to be the identity on all the other tensor factors. Similarly, if $S$ and $S'$ are related by a split, we define a map $\Delta: \CKh(D(S)) \iso V^{\tensor k} \to V^{\tensor k+1} \iso \CKh(D(S'))$. On the tensor factor that is being split, $\Delta$ is defined as
\[
    \Delta: V \to V \tensor V = 
        \piecewise{
            v_+ \mapsto v_+ \tensor v_- + v_- \tensor v_+ \\
            v_- \mapsto v_- \tensor v_- \\
        }
\]
We extend $\Delta$ to all of $\CKh(D(S))$ be setting it to be the identity on all the other tensor factors. Let $\diff_{S,S'}: \CKh(D(S)) \to \CKh(D(S'))$ denote the map $m$ if $S$ and $S'$ are related by a merge, $\Delta$ if $S$ and $S'$ are related by a split, and $0$ otherwise. We will implicitly extend the domain and codomain of $\diff_{S,S'}$ by zero when we define the Khovanov differential $\diff: \CKh(D) \to \CKh(D)$ as
\[
    \diff \defeq \sum_{S, S'\in\set{0,1}^n} \epsilon(S,S') \diff_{S,S'}
\]
Here $\epsilon(S,S') \in \set{1,-1}$ is some sign-assignment function to ensure that $\diff^2=0$. There are many possible functions to choose for $\epsilon$; one that works here is to let
\[
    \epsilon(S,S') \defeq (-1)^{\sum_{i=1}^{j-1} S_i}
\]
where $j$ is the first index at which $S_i \not= S'_i$. The Khovanov differential has degree $1$ with respect to the homological grading, and $0$ with respect to the quantum grading.

Finally, we define the \textbf{Khovanov homology} of $L$ (with coefficients in $\field$) as the homology of this chain complex\footnote{Since the differential has degree $1$, it would perhaps be more appropriate to refer to $\CKh(D)$ as a \textit{cochain} complex, and thus we would be taking its \textit{cohomology}. This justifies our use of superscripts, but we will continue to use words like ``chain complex'' and ``homology'', as this seems to be the convention.}:
\begin{equation*}
    \Kh(L;\field) \defeq H^*(\CKh(D),\diff)
\end{equation*}

We will often omit writing the coefficient ring if it is clear from context. One can prove that $\Kh$ is an invariant of the link $L$, and therefore does not depend on the choice of diagram $D$, ordering of the crossings $c_1,\dots,c_n$, or sign assignment $\epsilon$.

\begin{example}
Let $L=\hopfLink$ be the Hopf link. Orient both components counter-clockwise. Choose the base ring $\field=\Q$, so $V=\Q^2$. We use the cube of resolutions to form the chain complex $\CKh(L)$:
\cd{
\hopfLinkab \ar[r] \& \hopfLinkbb \&
V \ar[r,"\Delta_2"] \& V^{\tensor 2} \\
\hopfLinkaa \ar[u] \ar[r] \& \hopfLinkba \ar[u] \&
V^{\tensor 2} \ar[u,"m_1"] \ar[r,"m_2"] \& V \ar[u,"\Delta_1"]
}
We can write the above in a ``flatter'' version:
\cd[row sep=large, column sep=large]{
V^{\tensor 2} \ar[r,"\spmat{m_1 \\ m_2}"] \& V \directsum V \ar[r,"\spmat{-\Delta_2 & \Delta_1}"] \& V^{\tensor 2}
}
Replacing $V$, $m$, and $\Delta$ by their definitions gives:
\cd[row sep=huge, column sep=huge]{
\Q^4 \ar[r,"\spmat{1 & 0 & 0 & 0 \\ 0 & 1 & 1 & 0 \\ 1 & 0 & 0 & 0 \\ 0 & 1 & 1 & 0}"] \& \Q^4 \ar[r,"\spmat{0 & 0 & 0 & 0 \\ -1 & 0 & 1 & 0 \\ -1 & 0 & 1 & 0 \\ 0 & -1 & 0 & 1}"] \& \Q^4
}
Note that we had to choose an ordered basis of each $V^{\tensor n}$ to write the matrices above. The convention we will use in this paper is that tensor factors are sorted left-to-right by the leftmost points in the loops representing them. Additionally, the signs labelling each loop start at all $+$ signs and count up in binary (e.g.\ $++$,$+-$,$-+$,$--$). For example, the 3rd column of the left matrix above says that
\eq{
    \diff(\underset{v_- \tensor v_+}{\hopfLinkaa}) &= \underset{v_-}{\hopfLinkab} + \underset{v_-}{\hopfLinkba}
}

Since our diagram contains no positive crossings and two negative crossings, we see that $n_+=0$, $n_-=-2$, and $\writhe(D) = -2$, which tells us how to shift the gradings. To find the Khovanov homology of the Hopf link, we take the homology of the Khovanov complex to find that:
\eq{
    \Kh^{-2}(L) &\iso \Q\{-4\} \directsum \Q\{-6\} \\
    \Kh^{-1}(L) &\iso 0 \\
    \Kh^0(L) &\iso \Q \directsum \Q\{-2\}
}
\end{example}

%%%%%%%%%%%%%%%%%%%%%%%%%%%%%%%%%%%%%%%%%%%%%%%%%%%%%%%%%%%%%%%%%%%%%%%%%%%%%%%%

\subsection{$\Z$-filtrations}
\label{sec:z-filtrations}

In the process of recalling annular Khovanov homology in \cref{sec:akh}, we will need to construct a filtration on the Khovanov complex. Our homology in \cref{sec:mkh} will also be defined using a more general notion of $\Z^n$-filtration. We review the theory of filtrations by $\Z$ (which we may call $\Z$-filtrations) below.

The usual notion of a \textbf{(bounded, ascending) filtration} $\filt$ of an $R$-module $M$ is a sequence of submodules:
\[
    \ldots \subseteq \filt_{-1} M \subseteq \filt_0 M \subseteq \filt_1 M \subseteq \ldots
\]
such that $\filt_i M = 0$ and $\filt_j M = M$ for some $i,j \in \Z$. Additionally, we will require that $\filt_s M = 0$ and $\filt_t M = M$ for some $s, t \in \Z$.

A module $M$ equipped with a filtration $\filt$ is a \textbf{filtered module} $(M,\filt)$. We will often overload notation by referring to filtered modules by their underlying modules. If two modules $M$ and $N$ are filtered, then a map $f: M \to N$ is \textbf{filtered} if $f(\filt_a M) \subseteq \filt_a N$ for all $a \in \Z$. A \textbf{filtered isomorphism} is a filtered map with a filtered inverse.

Given a filtered module $M$ and an injective map of modules $f: N \inj M$, define the \textbf{induced filtration} on $N$ to be the filtration $\filt_a'N = f^{-1}(\filt_a M)$ for $a \in \Z$. Oftentimes, $N \inj M$ will be thought of as an inclusion of a submodule. Similarly, given a filtered module $M$ and a surjective map of modules $f: M \surj N$, define the \textbf{quotient filtration} on $N$ to be the filtration $\filt_a'N = f(\filt_b M)$ for $b \in \Z$. Oftentimes, $M \surj N$ will be thought of as a quotient by a submodule.

\subsubsection{Associated graded objects}

Given a filtered module $M$, we define the \textbf{associated graded module} $\gr(M)$ as follows:
\eq{
    \gr(M) &\defeq \bigdirectsum_{a\in\Z} \gr[a](M) \\
    \gr[a](M) &\defeq \frac{\filt_a M}{\filt_{a-1} M}
}
For a vector space $V$, it is easy to see that $V \iso \gr(V)$ as modules; this is not true for modules over general rings. Because it will be useful in the proof of \cref{thm:mkh-invariance} later, we define a \textbf{filtered projective} module $M$ to be a filtered module such that $\gr[a](M)$ is projective for all $a \in \Z$.

\subsubsection{$\Z$-filtered complexes}

A \textbf{filtration of a chain complex} $(C, \diff)$ is a filtration of $C$ that respects $\diff$ in the sense that $\diff (\filt_a C) \subseteq \filt_a C$ for $a \in \Z$. A \textbf{filtered chain complex} is a chain complex $(C, \diff)$ equipped with a filtration $\filt$; we will often overload notation by simply referring to $(C,\diff,\filt)$ as $C$. A \textbf{filtered chain map} is a map of chain complexes that also respects the filtration, i.e.\ a map of modules $f: C \to D$ that commutes with $\diff$ and ``commutes'' with $\filt$. Similarly, the \textbf{associated graded complex} $\gr(C,\diff,\filt) = (\gr(C), \gr(\diff))$ has underlying module the associated graded module and differential induced by the quotient operation. Even for chain complexes of vector spaces, we no longer have that $C \iso \gr(C)$ as chain complexes.

A \textbf{quasi-isomorphism} is a map $f: C \to D$ of chain complexes that induces an isomorphism $f_*: H_*(C) \to H_*(D)$. A \textbf{filtered quasi-isomorphism} is a filtered chain map $f: C \to D$ that induces quasi-isomorphisms $\gr[a](f): \gr[a](C) \to \gr[a](D)$ for all $a \in \Z$. Note that this is a stronger condition than just asking that $f$ is a filtered map and also a quasi-isomorphism. The class of filtered quasi-isomorphisms includes filtered isomorphisms. It will be relevant later that the composition of two filtered quasi-isomorphisms is again a filtered quasi-isomorphism. 

A filtered map $f: C \to D$ is said to be \textbf{strict} if $f(\filt_a C) = f(C) \intersect \filt_a D$ (not just $\subseteq$) for all $a \in \Z$. Essentially by definition, inclusions of filtered submodules and projections onto filtered quotient modules are always strict. The following fact about strict maps of filtered complexes will be useful later:

\begin{lemma}\label{lemma:strict-map-exact-sequence}
If $f: C \to D$ is a strict map of filtered complexes, then the following sequence is exact:
\begin{center}\begin{tikzcd}
0 \ar[r] & \gr(\ker f) \ar[r] & \gr(C) \ar[r, "\gr(f)"] & \gr(D) \ar[r] & \gr(\coker f) \ar[r] & 0
\end{tikzcd}\end{center}
Specifically, if $f$ is injective (surjective), then $\ker f$ (respectively, $\coker f$) is zero, so the above gives us a short exact sequence of chain complexes.
\end{lemma}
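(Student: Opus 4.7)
The plan is to check exactness of the claimed four-term sequence separately at each of the four positions, level by level in the grading. First I would equip $\ker f$ with the induced filtration and $\coker f$ with the quotient filtration, so that
\[
    \gr[a](\ker f) = \frac{\ker f \cap \filt_a C}{\ker f \cap \filt_{a-1} C}, \qquad \gr[a](\coker f) = \frac{\pi(\filt_a D)}{\pi(\filt_{a-1} D)},
\]
where $\pi\colon D \surj \coker f$ is the projection, and the maps in the sequence are induced by the inclusion $\ker f \inj C$, by $f$, and by $\pi$. Since $f$ is a chain map and $\diff$ respects all the filtrations in sight, everything below is automatically compatible with differentials, so the real content is purely module-theoretic.

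Exactness at the two ends is essentially formal. At $\gr[a](\ker f)$: if $x \in \ker f \cap \filt_a C$ satisfies $x \in \filt_{a-1} C$, then already $x \in \ker f \cap \filt_{a-1} C$, so $[x] = 0$ in $\gr[a](\ker f)$, giving injectivity. At $\gr[a](\coker f)$: surjectivity is immediate because $\pi(\filt_a D)$ is by definition the image of $\filt_a D$, so every element of $\gr[a](\coker f)$ lifts from $\gr[a](D)$.

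The two middle exactness claims are where strictness does the actual work. For exactness at $\gr[a](C)$, suppose $[x] \in \gr[a](C)$ lies in $\ker \gr[a](f)$, so $x \in \filt_a C$ and $f(x) \in \filt_{a-1} D$. Strictness gives $f(\filt_{a-1} C) = f(C) \cap \filt_{a-1} D$, and $f(x)$ lies in the right-hand side, so there exists $y \in \filt_{a-1} C$ with $f(y) = f(x)$; then $x - y \in \ker f \cap \filt_a C$ represents a class in $\gr[a](\ker f)$ mapping to $[x - y] = [x]$. For exactness at $\gr[a](D)$, suppose $[d] \in \gr[a](D)$ has $[\pi(d)] = 0$ in $\gr[a](\coker f)$, so $\pi(d) = \pi(d')$ for some $d' \in \filt_{a-1} D$. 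Then $d - d' \in \ker \pi \cap \filt_a D = f(C) \cap \filt_a D$, which by strictness equals $f(\filt_a C)$; lifting to $c \in \filt_a C$ gives $\gr[a](f)([c]) = [d - d'] = [d]$.

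The only genuine obstacle I anticipate is the temptation to drop strictness and try to lift $y$ or $c$ from all of $C$ rather than from the appropriate filtered piece: without strictness one has only $f(\filt_a C) \subseteq f(C) \cap \filt_a D$, and this containment can fail to be an equality, breaking both middle exactness arguments. The ``specifically'' sentence is then immediate: if $f$ is injective the first term vanishes, and if $f$ is surjective the last term vanishes, yielding a short exact sequence of associated graded chain complexes in either case.
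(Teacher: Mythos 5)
Your proof is correct: the filtrations you place on $\ker f$ and $\coker f$ are the intended ones, exactness at the two outer spots is indeed formal, and your two middle arguments use strictness exactly where it is needed (lifting $f(x)\in f(C)\cap\filt_{a-1}D$ into $f(\filt_{a-1}C)$, and lifting $d-d'\in f(C)\cap\filt_a D$ into $f(\filt_a C)$); the compatibility with differentials and the ``specifically'' sentence are immediate, as you say. The route is genuinely different from the one the paper relies on, however. The paper defers this $\Z$-filtered statement to the Stacks Project and, when it later generalizes it to $\poset$-filtrations (\cref{lemma:p-filtered-strict-map-exact-sequence}), it argues structurally rather than by a four-spot chase: it first proves a short exact sequence $0\to\gr(X)\to\gr(A)\to\gr(A/X)\to 0$ for a filtered subcomplex $X\subseteq A$, applies this twice (to $\ker f\subseteq C$ and to $\im f\subseteq D$), and then splices the two sequences by showing that strictness makes $\coim(f)\to\im(f)$ a \emph{filtered} isomorphism, hence $\gr(\coim f)\iso\gr(\im f)$. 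Your direct element-level verification is more elementary and makes visible exactly which exactness claims consume strictness, whereas the paper's factorization through coimage and image isolates strictness in a single reusable statement and transfers with no change to the poset-filtered setting, where the quotients defining $\gr[a]$ involve $\sum_{b<a}\filt_b$ rather than a single predecessor $\filt_{a-1}$ (your chase would also adapt, but the formulas would need that modification). One cosmetic remark: you only verify the nontrivial inclusions at the middle spots; it is worth one clause noting that the composites $\gr(\ker f)\to\gr(C)\to\gr(D)$ and $\gr(C)\to\gr(D)\to\gr(\coker f)$ vanish, which is immediate since $f$ kills $\ker f$ and $\pi$ kills $f(C)$.
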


A proof of the above lemma can be found at \cite[\href{https://stacks.math.columbia.edu/tag/0120}{Section 0120}]{stacks-project}.

\subsubsection{$\Z$-filtered vector spaces}

Since we are going to be working over a field for most of this paper, we can make some convenient simplifications. A filtered vector space $V$ is isomorphic to its associated graded vector space $\gr(V)$. Therefore, a $\Z$-filtration $\filt$ of a vector space $V$ is equivalent to a $\Z$-grading $\g$ of $V$. Under this correspondence, filtered isomorphisms and graded isomorphisms coincide. The following lemmas explain how these gradings interact with the induced and quotient filtrations described earlier.

\begin{lemma}\label{induced-grading}
Let $V$ be a filtered vector space with grading $\g$, and let $f: W \to V$ be an injective linear map. Give $W$ the induced filtration; the associated grading on a homogeneous element $w \in W$ is then given by $\g(f(w))$.
\end{lemma}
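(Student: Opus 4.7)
The plan is to unwind the definitions carefully, leveraging the correspondence between $\Z$-filtrations and $\Z$-gradings on a vector space that the author has just recorded. The key tautology I will use is the following: if $V$ is a filtered vector space and we pick any splitting $V \iso \bigdirectsum_a V_a$ compatible with $\filt$, then a nonzero element $v \in V$ is homogeneous of grade $a$ if and only if $v \in \filt_a V$ and $v \notin \filt_{a-1} V$. In particular, for nonzero homogeneous $v$ the grade $\g(v)$ depends only on the filtration, not on the chosen splitting, and so may be read off from the membership pattern of $v$ in the $\filt_\bullet V$.

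With that in hand, let $w \in W$ be a nonzero homogeneous element with respect to some grading on $W$ compatible with the induced filtration $\filt_a' W \defeq f^{-1}(\filt_a V)$, and let $a$ denote its grade. Applying the characterization above in $W$, this is equivalent to the pair of conditions $w \in \filt_a' W$ and $w \notin \filt_{a-1}' W$. Substituting the definition of the induced filtration, these conditions translate immediately to $f(w) \in \filt_a V$ and $f(w) \notin \filt_{a-1} V$, which is exactly the statement that $\g(f(w)) = a$. Injectivity of $f$ is used only to guarantee that $f(w) \neq 0$ so that $\g(f(w))$ is defined in the first place (and to ensure the induced filtration is actually bounded, inherited from the boundedness of $\filt$ on $V$).

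I do not anticipate any real obstacle here: once the filtration--grading dictionary is stated explicitly, the lemma is essentially a tautology. The only mild care needed is to phrase the argument so that it is independent of any particular choice of graded splitting of $W$, which the above formulation in terms of membership in $\filt_a' W \setminus \filt_{a-1}' W$ accomplishes automatically.
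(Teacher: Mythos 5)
The paper states this lemma without proof (it is presented as an immediate consequence of the filtration--grading dictionary for vector spaces), so there is no official argument to compare against; your unwinding of the definitions is exactly the argument the paper leaves implicit, and it is correct in substance. One imprecision is worth cleaning up, because your ``key tautology'' is only half true: for nonzero $v \in V$, membership in $\filt_a V \setminus \filt_{a-1} V$ says that the \emph{filtration degree} of $v$ is $a$ (equivalently, that the top homogeneous component of $v$ lies in degree $a$); it does not imply that $v$ is homogeneous, as $v = v_a + v_{a-1}$ with $v_a \neq 0$ shows. This matters at your last step: even when $w$ is homogeneous for the induced filtration on $W$, the image $f(w)$ need not be homogeneous for the chosen grading of $V$ --- for instance $V = \Q e_0 \oplus \Q e_1$ graded in degrees $0$ and $1$, with $W$ the line spanned by $e_0 + e_1$. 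So the conclusion ``$\g(f(w)) = a$'' must be read as ``$f(w)$ has filtration degree $a$,'' i.e.\ $a = \min\{b : f(w) \in \filt_b V\}$, the largest degree occurring among the homogeneous components of $f(w)$; this is in fact the only sensible reading of the lemma, since $\g$ is a priori defined only on homogeneous elements of $V$. With that reading your argument goes through verbatim: by the definition of the induced filtration, $w \in \filt'_a W$ iff $f(w) \in \filt_a V$ and $w \notin \filt'_{a-1} W$ iff $f(w) \notin \filt_{a-1} V$, and your observations about the role of injectivity (guaranteeing $f(w) \neq 0$ and boundedness of the induced filtration) are correct.
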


\begin{lemma}\label{quotient-grading}
Let $V$ be a filtered vector space with grading $\g$, and let $f: V \to W$ be a surjective linear map. Give $W$ the quotient filtration; the associated grading on a homogeneous element $w \in W$ is then given by the minimum of $\g(v)$ over all homogeneous elements $v\in f^{-1}(w)$.
\end{lemma}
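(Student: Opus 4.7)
The plan is to verify the claimed description of the quotient grading by unwinding definitions, making use of the fact that $\Z$-filtrations and $\Z$-gradings on vector spaces correspond under the choice of a graded splitting. Under this correspondence, a nonzero homogeneous element $w \in W$ has grade $\g'(w)$ equal to the smallest integer $a$ such that $w \in \filt_a' W$; by the definition of the quotient filtration, this is the smallest $a$ such that $w$ lies in $f(\filt_a V)$.

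For the inequality $\g'(w) \le \min_v \g(v)$, I would take any homogeneous $v \in f^{-1}(w)$ of grade $a$. Since $v \in V_a \subseteq \filt_a V$, we have $w = f(v) \in f(\filt_a V) = \filt_a' W$, so $\g'(w) \le a$. Minimizing over all such $v$ yields $\g'(w) \le \min_v \g(v)$.

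For the reverse inequality, let $a = \g'(w)$; then $w \in f(\filt_a V)$, so some $v \in \filt_a V$ satisfies $f(v) = w$. Splitting $\filt_a V$ via the grading on $V$, write $v = v_a + v_{<a}$ with $v_a \in V_a$ and $v_{<a} \in \filt_{a-1} V$. Then $f(v_a) \equiv w \pmod{\filt_{a-1}' W}$. The point is that, since we are free to choose the grading on $W$ splitting its quotient filtration, we can take graded complements $W_a$ of $\filt_{a-1}' W$ inside $\filt_a' W$ that lie in the image $f(V_a)$; this is possible because the map $V_a \to \gr[a](W)$ sending $v_a \mapsto [f(v_a)]$ is surjective, as we just saw from the decomposition of an arbitrary preimage. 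With such compatible splittings on $V$ and $W$, every homogeneous $w \in W_a$ admits a homogeneous preimage $v_a \in V_a$, proving $\min_v \g(v) \le a = \g'(w)$.

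The main obstacle is the last step: one must ensure a coherent choice of grading on $W$ so that every homogeneous $w$ actually admits a homogeneous preimage (otherwise the minimum is taken over an empty set, and the formula fails). I would handle this by constructing the grading on $W$ inductively from low grades to high, at each stage choosing a graded complement of $\filt_{a-1}' W$ in $\filt_a' W$ that sits inside $f(V_a)$. Everything else is bookkeeping, and I would also invoke \cref{induced-grading} applied to the filtered section $W \hookrightarrow V$ produced by this construction to streamline the argument.
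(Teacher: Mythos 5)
Your proof is correct, and there is nothing in the paper to measure it against: the paper states this lemma with no argument at all, as an immediate consequence of the equivalence between $\Z$-filtrations and $\Z$-gradings of vector spaces, so your write-up supplies the missing justification. Both of your inequalities are sound: a homogeneous preimage $v \in V_a$ gives $w \in f(\filt_a V) = \filt_a' W$, hence $\g'(w) \le \g(v)$; and since the map $V_a \to \gr[a](W)$, $v_a \mapsto [f(v_a)]$, is surjective, one may choose the splitting $W = \bigdirectsum_a W_a$ of the quotient filtration with $W_a \subseteq f(V_a)$, so that every homogeneous $w$ of degree $a$ has a homogeneous preimage in $V_a$. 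The caveat you isolate is the genuinely delicate point, and your resolution is the right one: the statement holds verbatim only for such a compatible choice of splitting (equivalently, when read as a statement about the filtration level $\min\{a : w \in \filt_a' W\}$). For an arbitrary splitting it can fail: grade $V = \Q\langle e_0, e_1, e_2 \rangle$ by $\g(e_i) = i$ and let $f: V \to W = \Q^2$ send $e_0, e_1, e_2$ to $(1,0)$, $(0,1)$, $(1,1)$; then $w = (1,1)$ is homogeneous of degree $1$ for the splitting $W_0 = \langle (1,0) \rangle$, $W_1 = \langle (1,1) \rangle$ of the quotient filtration, yet its only homogeneous preimage is $e_2$, of degree $2$. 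Your requirement $W_a \subseteq f(V_a)$ rules out exactly this failure, and it also matches how the lemma is used in \cref{sec:proof-r2-invariance}, where the relevant quantity is the quotient filtration level of the class of a generator --- something your identity $\g'(w) = \min\{a : w \in f(\filt_a V)\}$ already yields for every $w$, homogeneous or not. The concluding appeal to \cref{induced-grading} via the graded section $W \hookrightarrow V$ built from your splitting is likewise legitimate, since that section carries the quotient filtration to the induced one.
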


%%%%%%%%%%%%%%%%%%%%%%%%%%%%%%%%%%%%%%%%%%%%%%%%%%%%%%%%%%%%%%%%%%%%%%%%%%%%%%%%

\subsection{Annular Khovanov homology}
\label{sec:akh}

Now that we have filtrations available to us, we can equip the Khovanov complex with one to give a simple description of annular Khovanov homology.

Let $A=\D^2\setminus *$ be an annulus. If $D$ is a diagram for a link $L \subset A\times I$, then one can define more structure on the Khovanov complex $(\CKh(D),\diff)$. If $x=\set{(C_i, s_i)}_i$ is an enhanced state of $\CKh(D)$, let
\begin{equation*}
    k(x) \defeq \sum_{i=1}^j s_i\equivclass{C_i} \qquad \in H_1(A; \Z)
\end{equation*}
Since $H_1(A; \Z) \iso \Z$, $k$ actually specifies a grading on the underlying module $\CKh(L)$. In \cite{roberts2013}, Roberts proved that the filtration $\filt^A$ induced by $k$ on $(\CKh(L),\diff)$ is compatible with the Khovanov differential, and thus we can view $(\CKh(L),\diff,\filt^A)$ as a filtered chain complex. The \textbf{annular Khovanov homology} of $L$ is then defined to be
\begin{equation*}
    \AKh(L) \defeq H^*(\gr(\CKh(L), \diff, \filt^A))
\end{equation*}
That the above is actually an invariant of annular links was first proved by Asaeda, Przytycki, and Sikora in \cite{asaeda2004}, although not in those exact words.

\begin{example}
Let $L=\annularHopfLink$; here, the black dot represents the missing center of the annulus. We construct a similar cube of resolutions as before:
\cd{
\annularHopfLinkab \ar[r] \& \annularHopfLinkbb \&
V \ar[r,"\Delta_2"] \& V^{\tensor 2} \\
\annularHopfLinkaa \ar[u] \ar[r] \& \annularHopfLinkba \ar[u] \&
V^{\tensor 2} \ar[u,"m_1"] \ar[r,"m_2"] \& V \ar[u,"\Delta_1"]
}
As before, the differential can be written explicity as:
\cd[row sep=huge, column sep=huge]{
\Q^4 \ar[r,"\spmat{1 & 0 & 0 & 0 \\ 0 & 1 & 1 & 0 \\ 1 & 0 & 0 & 0 \\ 0 & 1 & 1 & 0}"] \& \Q^4 \ar[r,"\spmat{0 & 0 & 0 & 0 \\ -1 & 0 & 1 & 0 \\ -1 & 0 & 1 & 0 \\ 0 & -1 & 0 & 1}"] \& \Q^4
}
The $k$-grading of a generator represented by one of the diagrams $\annularHopfLinkaa$, $\annularHopfLinkab$, or $\annularHopfLinkba$ is 0. The $k$-grading of a generator represented by $\annularHopfLinkbb$ is $-2$ if both circles are marked with a $-$, $0$ if the circles have opposite signs, and $2$ if both circles are marked with a $+$. The differential on the above complex is filtered with respect to this grading, so to calculate annular Khovanov homology, we consider the associated graded complex.
\cd[row sep=huge, column sep=huge]{
\Q^4 \ar[r,"\spmat{1 & 0 & 0 & 0 \\ 0 & 1 & 1 & 0 \\ 1 & 0 & 0 & 0 \\ 0 & 1 & 1 & 0}"] \& \Q^4 \ar[r,"\spmat{0 & 0 & 0 & 0 \\ -1 & 0 & 1 & 0 \\ -1 & 0 & 1 & 0 \\ 0 & 0 & 0 & 0}"] \& \Q^4
}
One can think of the differential on this complex as the original Khovanov differential, but without the two maps
\eq{
\underset{v_-}{\annularHopfLinkab} \mapsto \underset{v_- \tensor v_-}{\annularHopfLinkbb}
\quad \text{ and } \quad 
\underset{v_-}{\annularHopfLinkba} \mapsto \underset{v_- \tensor v_-}{\annularHopfLinkbb}
}
since these lower the $k$-grading. This complex allows us to calculate that:
\eq{
    \AKh^{-2}(L) &\iso \Q\{-4\} \directsum \Q\{-6\} \\
    \AKh^{-1}(L) &\iso \Q\{-4\} \\
    \AKh^0(L) &\iso \Q \directsum \Q\{-2\} \directsum \Q\{-4\}
}
\end{example}

%%%%%%%%%%%%%%%%%%%%%%%%%%%%%%%%%%%%%%%%%%%%%%%%%%%%%%%%%%%%%%%%%%%%%%%%%%%%%%%%

\section{Filtrations by Posets}
\label{sec:poset-filtrations}

In this section, we generalize the notion of a filtration from \cref{sec:z-filtrations}; instead of indexing subobjects over $\Z$, we allow filtrations by arbitrary posets $\poset$ subject to some conditions. This general notion of filtration will be used to define our main invariant in \cref{sec:mkh}.

\subsection{$\poset$-filtrations}

Recall that a \textbf{partially-ordered set} (usually abbreviated ``poset'') is a set $\poset$ equipped with a partial order $\le$, i.e. a relation that is reflexive, anti-symmetric, and transitive. A poset is \textbf{bounded} if there exists a least element $\bot \in \poset$ and a greatest element $\top \in \poset$. A map of posets is simply a function that respects the partial order. A \textbf{lattice} is a poset in which every pair of elements $a,b$ has a greatest lower bound $a \meet b$ (called their ``meet''), and least upper bound $a \join b$ (called their ``join''). A lattice is \textbf{distributive} if these operations distribute across one another. A \textbf{lattice homomorphism} $L \to L'$ is a map of posets that preserves meets and joins.

One way to get a lattice is to take the set $\submodules(M)$ of submodules of any $\field$-module $M$; given $A,B \subseteq M$, we define $A \le B$ iff $A \subseteq B$, $A \meet B \defeq A \intersect B$, and $A \join B \defeq A + B$. This lattice is not, in general, distributive.

\begin{definition}
For a poset $\poset$, a \textbf{$\poset$-filtration} of a $\field$-module $M$ is a map $\filt_{-}: \poset \to \submodules(M)$, i.e. a choice of submodule $\filt_a M$ for each $a \in \poset$ such that $\filt_a M \subseteq \filt_b M$ whenever $a \le b$.
\end{definition}

A filtration is \textbf{bounded} if there exists a finite interval $[s,t]\subseteq \poset$ such that the restriction of $\filt_{-}$ to $[s,t]$ is a map of bounded posets i.e.\ $\filt_s = 0$ and $\filt_t = M$. We will call a filtration \textbf{distributive} if the sublattice of $\submodules(M)$ generated by the image of $\filt_{-}$ is distributive. These properties ensure that the associated graded objects are suitably nice. For this reason, we will assume that all $\poset$-filtrations are bounded and distributive for the rest of the paper.

A \textbf{map of $\poset$-filtered modules} is a linear map $f: M \to N$ such that $f(\filt_a M) \subseteq \filt_a(N)$ for all $a \in \poset$. We will call a map of $\poset$-filtered modules \textbf{distributive} if the sublattice of $\submodules(N)$ generated by $f(\filt_{a}M)$ and $\filt_{a}N$ for all $a \in \poset$ is distributive. Again, we will assume that all maps of filtered modules are distributive for the rest of the paper.

Almost all of our constructions and definitions from \cref{sec:z-filtrations} can be generalized to posets, i.e.\ filtered isomorphisms, the induced and quotient filtration, and so on. The only notable exception is the associated graded object, which we will deal with shortly.

\subsubsection{Associated graded objects}

Given a $\poset$-filtered module $M$, we define the \textbf{associated graded module} $\gr(M)$ to be the module
\eq{
    \gr(M) &\defeq \bigdirectsum_{a \in \poset} \gr[a](M) \\
    \gr[a](M) &\defeq \frac{\filt_a M}{\sum\limits_{b < a} \filt_b M}
}

Here, the $\sum$ in the denominator refers to the sum of $\filt_b M$ as submodules of $\filt_a M$. One benefit of our definition of $\poset$-filtered module is that it ensures that the associated graded is not ``too big'':
\begin{theorem}\label{thm:rank-of-gr}
$\rank M = \rank \gr(M)$.
\end{theorem}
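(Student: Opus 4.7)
The strategy is to refine the $\poset$-filtration to an ascending $\Z$-filtration via a linear extension of the (finite) interval $[s,t]$, and then match up the graded pieces. Since the filtration is bounded, I may assume without loss of generality that $\poset = [s,t]$ is a finite poset with $\filt_s M = 0$ and $\filt_t M = M$. Choose any linear extension $a_0 = s < a_1 < \cdots < a_n = t$ refining the partial order, and set $N_k \defeq \sum_{i \le k} \filt_{a_i} M$. This yields an ascending chain $0 = N_{-1} \subseteq N_0 \subseteq \cdots \subseteq N_n = M$, so by additivity of rank over short exact sequences of modules,
\[
    \rank M = \sum_{k=0}^{n} \rank(N_k / N_{k-1}).
\]

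The core step is to identify $N_k / N_{k-1}$ with $\gr[a_k](M)$. By the second isomorphism theorem, $N_k / N_{k-1} \cong \filt_{a_k} M / (\filt_{a_k} M \cap N_{k-1})$, while $\gr[a_k](M) = \filt_{a_k} M / P_k$ with $P_k \defeq \sum_{b < a_k} \filt_b M$, so everything reduces to showing
\[
    \filt_{a_k} M \cap N_{k-1} = P_k.
\]
The inclusion $P_k \subseteq \filt_{a_k} M \cap N_{k-1}$ is immediate: any $b < a_k$ appears earlier than $a_k$ in the linear extension (so $\filt_b M \subseteq N_{k-1}$), and $\filt_b M \subseteq \filt_{a_k} M$ by monotonicity.

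The reverse inclusion is where distributivity does its work. Distributivity expands
\[
    \filt_{a_k} M \cap N_{k-1} = \sum_{i<k} \bigl(\filt_{a_k} M \cap \filt_{a_i} M\bigr),
\]
and the terms with $a_i < a_k$ collapse to $\filt_{a_i} M \subseteq P_k$. The delicate case is when $a_i$ and $a_k$ are incomparable: here I need to argue that distributivity of the sublattice generated by the $\filt_a M$ forces $\filt_{a_k} M \cap \filt_{a_i} M$ to decompose as a sum of $\filt_c M$ over common lower bounds $c$ of $a_i$ and $a_k$, each of which satisfies $c < a_k$ and hence contributes to $P_k$. This is the main obstacle of the proof. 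I expect to establish the intersection identity by inducting on $|[s,t]|$: peel off a maximal non-top element $m$, apply the inductive hypothesis to the sub-filtration on $\poset \setminus \{m\}$, and use the distributive identities to control how $\filt_m M$ interacts with filtered pieces for elements incomparable to $m$. Alternatively, one could invoke Birkhoff's representation theorem to rewrite the finite distributive sublattice as the lattice of down-sets of its join-irreducibles and reduce the intersection calculation to a combinatorial one. Once the identity is in hand, summing ranks gives $\rank M = \sum_k \rank \gr[a_k](M) = \rank \gr(M)$, completing the proof.
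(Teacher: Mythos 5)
Your route is genuinely different from the paper's: the paper introduces $\gr[\le a](M)$, shows that $u(a)=\rank \gr[\le a](M)$ and $v(a)=\rank \filt_a M$ are valuations on the distributive lattice generated by the filtration, and identifies them by an induction using inclusion--exclusion (\cref{lemma:inductive-hypothesis}), whereas you linearize the poset and telescope along the chain $N_k=\sum_{i\le k}\filt_{a_i}M$. The bookkeeping in your proposal is fine: rank additivity along the chain, the second isomorphism theorem, the easy inclusion $P_k\subseteq \filt_{a_k}M\cap N_{k-1}$, and the collapse of the terms with $a_i<a_k$. But the proof is not complete: the reverse inclusion $\filt_{a_k}M\cap N_{k-1}\subseteq \sum_{b<a_k}\filt_b M$, i.e.\ the claim that $\filt_{a_k}M\cap\filt_{a_i}M$ lies in $\sum_{b<a_k}\filt_b M$ when $a_i$ and $a_k$ are incomparable, is exactly where all the content of the theorem sits, and you explicitly leave it as something you ``expect to establish.''

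Moreover, that claim is not a formal consequence of the hypotheses you allow yourself (boundedness plus distributivity of the sublattice of $\submodules(M)$ generated by the image of $\filt_{-}$). Take $\poset=\{\bot,a,b,\top\}$ with $a,b$ incomparable, $M$ free of rank $2$, $\filt_\bot M=0$, $\filt_a M=\filt_b M=N$ a rank-one direct summand, and $\filt_\top M=M$: the generated sublattice is a chain, hence distributive, yet $\filt_a M\cap\filt_b M=N\not\subseteq \filt_\bot M=0$, and in this configuration $\rank\gr(M)=3\ne 2=\rank M$. So neither your proposed induction on $|[s,t]|$ nor Birkhoff's representation theorem can extract the intersection identity from bare distributivity of the generated sublattice; what is needed is the stronger compatibility between $\poset$ and $\submodules(M)$ that the filtrations in this paper actually enjoy, namely the splitting $\filt_a V=\bigdirectsum_{b\le a}V_b$ discussed in \cref{sec:filtered-vector-spaces} (automatic for filtrations induced by a $\poset$-grading on generators, as $\filt^\Sigma$ is), and which the paper's valuation argument is implicitly exploiting when it computes $v(a\join b)$ and applies inclusion--exclusion. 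Under that splitting your identity is immediate and your argument closes; without it, the key step is missing and cannot be supplied at the stated level of generality.
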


In proving this theorem, it will help to introduce a bit more notation. Let
\begin{equation*}
    \gr[\le a](M) \defeq \bigdirectsum_{b \le a} \gr[b](M)
\end{equation*}
It turns out that the rank of the above module satisfies some nice properties, which we will encode in the following definition. A \textbf{valuation} on a bounded lattice $L$ (see \cite{klain1997}) is a function $\mu: L \to \R$ such that
\begin{itemize}
    \item $\mu(a \join b) = \mu(a) + \mu(b) - \mu(a \meet b)$
    \item $\mu(\bot) = 0$
\end{itemize}
When $L$ is distributive, we can iterate the above conditions to obtain the \textit{inclusion-exclusion principle}:
\begin{equation*}
    \mu(a_1 \join a_2 \join \dots \join a_n) = \sum_i \mu(a_i) - \sum_{i<j} \mu(a_i \meet a_j) + \sum_{i < j < k} \mu(a_i \meet a_j \meet a_k) - \dots
\end{equation*}
Essentially, this tells us that, in a distributive lattice, the valuation of a join of elements is entirely determined by the valuations of the meets of the elements.
We will use this notion to prove the next lemma.

\begin{lemma}\label{lemma:inductive-hypothesis}
$\rank \gr[\le a](M) = \rank \filt_a(M)$.
\end{lemma}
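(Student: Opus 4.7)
The plan is to proceed by strong induction on $|S_a|$, where $S_a = \{b \in \poset : b \le a\}$ (which is finite since the filtration is bounded), using the fact that $\rank$ is a valuation on the distributive sublattice $L_a$ of $\submodules(M)$ generated by $\{\filt_b M : b \in S_a\}$. Rank is always a valuation on a submodule lattice, since $\rank(X+Y) + \rank(X \cap Y) = \rank X + \rank Y$, so by the distributivity hypothesis on the filtration it obeys the full inclusion--exclusion formula on $L_a$, exactly as set up in the paragraph preceding the lemma.

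In the base case $|S_a| = 1$, we have $S_a = \{a\}$ with no $b < a$, so $\gr[a](M) = \filt_a M / 0 = \filt_a M$ and $\gr[\le a](M) = \gr[a](M)$; the claim is immediate.

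For the inductive step, I would use the short exact sequence
\[
0 \to \sum_{b < a} \filt_b M \to \filt_a M \to \gr[a](M) \to 0,
\]
which gives $\rank(\filt_a M) = \rank\bigl(\sum_{b<a}\filt_b M\bigr) + \rank(\gr[a](M))$. Since $\gr[\le a](M) = \gr[a](M) \oplus \gr[<a](M)$ as $\field$-modules, the task reduces to showing $\rank\bigl(\sum_{b<a}\filt_b M\bigr) = \sum_{b < a}\rank(\gr[b](M))$. Write $\sum_{b < a}\filt_b M = \sum_{i=1}^{m}\filt_{a_i}M$ where $a_1,\dots,a_m$ are the maximal elements of $S_a \setminus \{a\}$. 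Expand the left-hand side via inclusion--exclusion in $L_a$, and expand the right-hand side by applying the inductive hypothesis to each $\filt_{a_i} M$ (which is legal since $|S_{a_i}| < |S_a|$), together with the inductive hypothesis for the intersections $\filt_{a_{i_1}} M \cap \cdots \cap \filt_{a_{i_k}} M$, viewed as sub-filtered modules with strictly smaller underlying poset.

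The main obstacle will be the bookkeeping in that final matching: one must track exactly how each term $\rank(\gr[b](M))$ is picked up by the alternating sum coming from inclusion--exclusion, and verify that the net coefficient collapses to $1$ for every $b < a$. This is essentially a M\"obius-inversion identity on the subposet $S_a \setminus \{a\}$, and it is precisely the distributivity hypothesis on $L_a$ that permits the inclusion--exclusion step --- without it, the ranks of the associated graded pieces could easily overshoot $\rank \filt_a M$, and the identity would fail.
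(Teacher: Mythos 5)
Your overall strategy---treat $\rank$ as a valuation on the distributive sublattice of $\submodules(M)$ generated by the filtration and induct using inclusion--exclusion---is the same one the paper uses, but as written the argument has two gaps. The more serious one is that in the inductive step you apply the inductive hypothesis to the intersections $\filt_{a_{i_1}}M\cap\cdots\cap\filt_{a_{i_k}}M$, which are not instances of the statement being proved: a $\poset$-filtration is only required to be order-preserving, so such an intersection need not equal $\filt_c M$ for any $c\in\poset$ (indeed $\poset$ need not even be a lattice, and nothing in the definition forces $\filt_b M\cap\filt_c M=\filt_{b\meet c}M$ when it is). Saying the intersection can be ``viewed as a sub-filtered module with strictly smaller underlying poset'' does not repair this: you would still have to prove that the graded pieces of that induced filtration have the same ranks as the corresponding $\gr[b](M)$, which is essentially the identity you are trying to establish. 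You would need either to strengthen the induction (prove $\rank\bigl(\filt_{a_{i_1}}M\cap\cdots\cap\filt_{a_{i_k}}M\bigr)=\sum_b\rank\gr[b](M)$, the sum over all $b$ lying below every $a_{i_j}$), or to do what the paper does: define $u(a)\defeq\rank\gr[\le a](M)$ and $v(a)\defeq\rank\filt_a M$, check directly that both are valuations on the lattice generated by the image of $\filt_{-}$ (where the meet of two elements \emph{is} their intersection of submodules, so no identification with filtration levels is ever needed), and prove $u=v$ by induction over that lattice.

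The second gap is the step you yourself flag as ``the main obstacle'': the verification that the net coefficient of each $\rank\gr[b](M)$ in the inclusion--exclusion expansion collapses to $1$ is left undone, so the proposal stops short of a proof. That computation is in fact routine---every $b<a$ lies below at least one maximal element $a_i$, and the coefficient is $\sum_{\emptyset\neq T\subseteq\{i\,:\,b\le a_i\}}(-1)^{|T|+1}=1$---but it must be carried out. Note that the paper's packaging absorbs this bookkeeping entirely: since $u$ and $v$ are valuations that agree (by induction) on all elements below $a$, inclusion--exclusion gives $u\bigl(\bigjoin_{b<a}b\bigr)=v\bigl(\bigjoin_{b<a}b\bigr)$ in one stroke, and then $\rank\filt_a M=\rank\gr[a](M)+\rank\sum_{b<a}\filt_b M$ (your short exact sequence, which is just the definition of $\gr[a](M)$) closes the induction.
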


\begin{proof}[Proof of \cref{lemma:inductive-hypothesis}]
First, we will view both sides of the equation as valuations on the lattice $L$ generated by the image of $\filt_{-}$. Let $u(a) \defeq \rank \gr[\le a](M)$, and let $v(a) \defeq \rank \filt_a M$. We can see that both $u$ and $v$ are valuations, since
\begin{align*}
    u(\bot)
    &= \rank \gr[\le \bot](M) \\
    &= \rank \gr[\bot](M) \\
    &= 0 \\
    v(\bot)
    &= \rank \filt_\bot M \\
    &= 0 \\
    u(a \join b)
    &= \rank \gr[\le a \join b](M) \\
    &= \rank \gr[\le a](M) + \rank \gr[\le b](M) - \rank \gr[\le a \meet b](M) \\
    &= u(a) + u(b) - u(a \meet b) \\
    v(a \join b)
    &= \rank \filt_{a \join b} M \\
    &= \rank(\filt_a M + \filt_b M) \\
    &= \rank \filt_a M + \rank \filt_b M - \rank(\filt_a M \intersect \filt_b M) \\
    &= \rank \filt_a M + \rank \filt_b M - \rank \filt_{a \meet b} M \\
    &= v(a) + v(b) - v(a \meet b)
\end{align*}

With this in mind, we would like to prove that $u$ and $v$ are actually the \textit{same} valuation on $\lowersets(\poset)$.

We will proceed by induction. Our base case is already done for us, as $u(\bot) = 0 = v(\bot)$. For the inductive case, fix $a \in L$ and assume that we have proven the claim for all $b \in L$ such that $b < a$. Then
\begin{align*}
    u(a)
    &= \rank \gr[\le a](M) \\
    &= \rank \bigdirectsum_{b \le a} \gr[b](M) \\
    &= \sum_{b \le a} \rank \gr[b](M) \\
    &= \rank \gr[a](M) + \sum_{b < a} \rank \gr[b](M) \\
    &= \rank \gr[a](M) + u\left(\bigjoin_{b < a} b\right) \\
    &= \rank \gr[a](M) + v\left(\bigjoin_{b < a} b\right) &\text{(by inclusion-exclusion)} \\
    &= \rank \gr[a](M) + \rank \sum_{b < a} \filt_b M \\
    &= \rank \filt_a M &\text{(by the definition of $\gr[a](M)$)} \\
    &= v(a) & &\qedhere
\end{align*}
\end{proof}

\begin{proof}[Proof of \cref{thm:rank-of-gr}]
This follows from \cref{lemma:inductive-hypothesis} with $a = \top \in \poset$.
\end{proof}

\subsubsection{$\poset$-filtered complexes}

As in the $\Z$-filtered case, a \textbf{$\poset$-filtration of a chain complex} $(C, \diff)$ is a $\poset$-filtration of $C$ that respects $\diff$ in the sense that $\diff (\filt_a C) \subseteq \filt_a C$ for $a \in \poset$. A \textbf{$\poset$-filtered chain map} is a map of chain complexes that also respects the filtration, i.e.\ a map of modules $f: C \to D$ that commutes with $\diff$ and ``commutes'' with $\filt$. Similarly, the \textbf{associated graded complex} $\gr(C,\diff,\filt) = (\gr(C), \gr(\diff))$ has underlying module the associated graded module and differential induced by the quotient operation. A \textbf{$\poset$-filtered quasi-isomorphism} is a filtered chain map $f: C \to D$ that induces quasi-isomorphisms $\gr[a](f): \gr[a](C) \to \gr[a](D)$ for all $a \in \poset$.

We define strictness for $\poset$-filtered maps the same way as we did for $\Z$-filtrations: $f: C \to D$ is strict iff $f(\filt_a C) = f(C) \intersect \filt_a D$ for all $a \in \poset$. Since it relies on our new definition of associated graded, we will generalize the proof of \cref{lemma:strict-map-exact-sequence} from \cite[\href{https://stacks.math.columbia.edu/tag/0120}{Section 0120}]{stacks-project} to the $\poset$-filtered case.

\begin{lemma}\label{lemma:p-filtered-short-exact-sequence}
Let $X$ be a $\poset$-filtered complex, and let $X \subseteq A$ be a filtered subcomplex. Then
\begin{center}\begin{tikzcd}
0 \ar[r] & \gr(X) \ar[r] & \gr(A) \ar[r] & \gr(A/X) \ar[r] & 0
\end{tikzcd}\end{center}
is exact.
\end{lemma}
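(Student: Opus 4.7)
The plan is to verify exactness degree by degree: for each $a \in \poset$, we show that
\[
0 \to \gr[a](X) \to \gr[a](A) \to \gr[a](A/X) \to 0
\]
is a short exact sequence of modules, and then take the direct sum over $a \in \poset$ to obtain the sequence in the statement. Unwinding the induced and quotient filtrations on the subcomplex and quotient complex, $\filt_a X = X \intersect \filt_a A$ while $\filt_a(A/X) = (\filt_a A + X)/X$. Surjectivity of $\gr[a](A) \to \gr[a](A/X)$ is then immediate: every class on the right lifts to some $y \in \filt_a A$, whose class in $\gr[a](A)$ maps onto it.

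For exactness in the middle, the plan is to take $x \in \filt_a A$ representing a class killed by the map to $\gr[a](A/X)$. Then modulo $X$, $x$ lies in $\sum_{b < a} \filt_b A$, so we can write $x = y + z$ with $y \in \sum_{b < a} \filt_b A$ and $z \in X$. Since $y \in \filt_a A$, the element $z = x - y$ lies in $X \intersect \filt_a A = \filt_a X$, and $[x] = [z]$ in $\gr[a](A)$ is manifestly in the image of $\gr[a](X) \to \gr[a](A)$. The reverse inclusion (image contained in kernel) is trivial because $X$ maps to zero in $A/X$.

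The hard step, and the only one where the standing hypotheses are actually used, is injectivity of $\gr[a](X) \to \gr[a](A)$. Its kernel is represented by the submodule $\filt_a X \intersect \sum_{b < a} \filt_b A = X \intersect \sum_{b < a} \filt_b A$, and one needs this to equal $\sum_{b < a}(X \intersect \filt_b A) = \sum_{b < a} \filt_b X$. Such identities can fail in arbitrary submodule lattices: the inclusion $\sum_{b}(X \intersect \filt_b A) \subseteq X \intersect \sum_b \filt_b A$ is free, but the reverse is not. This is exactly the obstacle that the paper's standing distributivity assumption for filtered maps is designed to remove: applied to the inclusion $X \inj A$, the sublattice of $\submodules(A)$ generated by $X$ and the $\filt_b A$ is distributive, so the binary identity $X \intersect (B_1 + B_2) = (X \intersect B_1) + (X \intersect B_2)$ iterates to the finite-sum version we need, closing the argument.
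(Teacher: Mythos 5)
Your proof is correct and follows essentially the same route as the paper's: a degree-by-degree verification at each $a \in \poset$, unwinding the induced filtration $\filt_a X = X \intersect \filt_a A$ and the quotient filtration on $A/X$, with injectivity reducing to the identity $X \intersect \sum_{b<a}\filt_b A = \sum_{b<a}(X \intersect \filt_b A)$. The only real difference is presentational — you argue middle exactness by an element chase where the paper uses isomorphism-theorem manipulations — and you make explicit the one place where the standing distributivity hypothesis is actually needed, a point the paper uses silently in the same identity.
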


\begin{proof}
First, we rewrite $\gr[a](X)$ as
\begin{align*}
    \gr[a](X) = \frac{\filt_a X}{\sum_{b < a} \filt_b X} = \frac{X \intersect \filt_a A}{\sum_{b < a} X \intersect \filt_b A} &= \frac{X \intersect \filt_a A}{X \intersect \sum_{b < a} \filt_b A} \\
    &\iso \frac{X \intersect \filt_a A + \sum_{b < a} \filt_b A}{\sum_{b < a} \filt_b A}
\end{align*}
to see that the induced map to
\begin{equation*}
\gr[a](A) = \frac{\filt_a A}{\sum_{b < a} \filt_b A}
\end{equation*}
is injective. Similarly, letting $\pi: A \to A/X$, we rewrite $\gr[a](A/X)$ as
\begin{align*}
    \gr[a](A/X) &= \frac{\filt_a A/X}{\sum_{b < a} \filt_b A/X} = \frac{\pi(\filt_a A)}{\sum_{b < a} \pi(\filt_b A)} = \frac{\pi(\filt_a A)}{\pi(\sum_{b < a} \filt_b A)} \\
    &= \frac{\frac{\filt_a A}{X \intersect \filt_a A}}{\frac{\sum_{b < a} \filt_b A}{X \intersect \sum_{b < a} \filt_b A}} = \frac{\frac{\filt_a A}{X \intersect \filt_a A}}{\frac{\sum_{b < a} \filt_b A}{X \intersect \filt_a A \intersect \sum_{b < a} \filt_b A}} \\
    &\iso \frac{\frac{\filt_a A}{X \intersect \filt_a A}}{\frac{X \intersect \filt_a A + \sum_{b < a} \filt_b A}{X \intersect \filt_a A}} \\
    &\iso \frac{\filt_a A}{X \intersect \filt_a A + \sum_{b < a} \filt_b A}
\end{align*}
to see that the induced map $\gr[a](A) \to \gr[a](A/X)$ is surjective. Thus, all we have left to do is check exactness in the middle. We have actually already done the work for this part, though; checking the numerator of $\gr[a](X)$ and the denominator of $\gr[a](A/X)$, we see that $\im(\gr[a](X) \to \gr[a](A))$ and $\ker(\gr[a](A) \to \gr[a](A/X))$ are equal, thus the sequence is exact.
\end{proof}

\begin{lemma}\label{lemma:p-filtered-strict-map-exact-sequence}
If $f: C \to D$ is a strict map of $\poset$-filtered complexes, then the following sequence is exact:
\begin{center}\begin{tikzcd}
0 \ar[r] & \gr(\ker f) \ar[r] & \gr(C) \ar[r, "\gr(f)"] & \gr(D) \ar[r] & \gr(\coker f) \ar[r] & 0
\end{tikzcd}\end{center}
\end{lemma}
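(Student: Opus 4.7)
The plan is to factor $f$ through its image and reduce the claim to two applications of the preceding short exact sequence lemma. Write the standard factorization $C \surj C/\ker f \xrightarrow{\bar f} \im f \inj D$, where $C/\ker f$ carries the quotient filtration, $\im f \subseteq D$ carries the induced filtration, and $\ker f \subseteq C$, $\im f \subseteq D$ are filtered subcomplexes in the sense of \cref{lemma:p-filtered-short-exact-sequence}.

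The crux is to use strictness of $f$ to show that the canonical bijection $\bar f: C/\ker f \to \im f$ is a \emph{filtered isomorphism}, not merely a filtered map. By definition of the quotient filtration, $\filt_a(C/\ker f)$ is the image of $\filt_a C$ under the projection, which $\bar f$ sends to $f(\filt_a C)$; by strictness this equals $\im f \intersect \filt_a D = \filt_a(\im f)$. Since $\bar f$ is a bijection of modules and carries each filtered piece isomorphically onto the corresponding filtered piece of $\im f$, applying $\gr$ gives an isomorphism of graded complexes $\gr(\bar f): \gr(C/\ker f) \iso \gr(\im f)$.

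Next, apply \cref{lemma:p-filtered-short-exact-sequence} to the two filtered subcomplex inclusions $\ker f \inj C$ and $\im f \inj D$ to obtain two short exact sequences of associated graded complexes:
\begin{center}\begin{tikzcd}
0 \ar[r] & \gr(\ker f) \ar[r] & \gr(C) \ar[r] & \gr(C/\ker f) \ar[r] & 0 \\
0 \ar[r] & \gr(\im f) \ar[r] & \gr(D) \ar[r] & \gr(\coker f) \ar[r] & 0
\end{tikzcd}\end{center}
Splicing these two sequences at the isomorphism $\gr(C/\ker f) \iso \gr(\im f)$ produces the desired four-term exact sequence, with middle map equal to $\gr(f)$ by construction.

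The only substantive step is the identification $\gr(C/\ker f) \iso \gr(\im f)$; this is precisely where strictness is needed, and everything else is a formal splicing argument. In particular, one should check that the quotient filtration on $C/\ker f$ and the induced filtration on $\im f$ remain distributive and bounded so that \cref{lemma:p-filtered-short-exact-sequence} genuinely applies — this follows from our blanket distributivity and boundedness conventions on filtrations and filtered maps.
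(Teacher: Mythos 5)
Your proposal is correct and follows essentially the same route as the paper: both apply the subcomplex short exact sequence lemma to $\ker f \subseteq C$ and $\im f \subseteq D$, use strictness to show the canonical map $C/\ker f \to \im f$ is a filtered isomorphism via $f(\filt_a C) = f(C) \intersect \filt_a D$, and splice the two sequences at the resulting isomorphism of associated graded complexes. Your added remark about checking boundedness and distributivity of the quotient and induced filtrations is a reasonable point of care, handled in the paper by its blanket conventions.
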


\begin{proof}
First, we get that
\begin{center}\begin{tikzcd}
0 \ar[r] & \gr(\ker f) \ar[r] & \gr(C) \ar[r] & \gr(\coim(f)) \ar[r] & 0
\end{tikzcd}\end{center}
and
\begin{center}\begin{tikzcd}
0 \ar[r] & \gr(\im f) \ar[r] & \gr(D) \ar[r] & \gr(\coker(f)) \ar[r] & 0
\end{tikzcd}\end{center}
are exact as special cases of \cref{lemma:p-filtered-short-exact-sequence}. Here, $\coim(f) = C/\ker(f)$ is the coimage of $f$. We can stitch these two sequences together if we can show that $\gr(\coim(f)) \to \gr(\im(f))$ is an isomorphism.

To accomplish this, we will show that if $f$ is strict, then the map $\coim(f) \to \im(f)$ is a filtered isomorphism. This map is always an isomorphism of chain complexes, so it suffices to check that the map respects the filtrations. On $\coim(f)$, we have the quotient filtration from $C \surj \coim(f)$, and on $\im(f)$ we have the induced filtration from $\im(f) \inj D$. Since $f$ is strict, we get that these filtrations coincide, i.e.\ for all $a \in \poset$,
\begin{equation*}
f(\filt_a \coim(f)) = f(\filt_a C) = f(C) \intersect \filt_a D = \filt_a \im(f)
\end{equation*}

By functoriality, we then get that $\gr(\coim(f)) \to \gr(\im(f))$ is an isomorphism of graded complexes. Therefore, we get that
\begin{center}\begin{tikzcd}
0 \ar[r] & \gr(\ker f) \ar[r] & \gr(C) \ar[rr, "\gr(f)"] \ar[dr] &[-40pt] &[-40pt] \gr(D) \ar[r] & \gr(\coker f) \ar[r] & 0 \\
& & & \gr(\coim(f)) \iso \gr(\im(f)) \ar[ur]
\end{tikzcd}\end{center}
is exact.
\end{proof}

\subsubsection{$\poset$-filtered vector spaces}
\label{sec:filtered-vector-spaces}

$\poset$-filtered vector spaces have a simpler characterization. Let $\field$ be a field, and let $V$ be a $\field$-vector space. As we have previously mentioned, the image of $\filt_{-}$ in $\submodules(V)$ generates a finite distributive lattice. It turns out that such a lattice is isomorphic to a lattice of subsets, with operations given by set intersection and union. Specifically, $V$ decomposes as a direct sum of subspaces $V_a$ such that $\filt_a V = \bigdirectsum_{b \le a} V_a$ \cite{polishchuk2005}. Equivalently, there exists a basis $\beta$ of $V$ such that each $\filt_a V$ is the span of a subset of $\beta$. Therefore, we could instead specify a $\poset$-filtration of $V$ by fixing a basis $\beta$ of $V$ and giving a map from $\poset$ to $\subsets(\beta)$ (the poset of subsets of $\beta$). This is equivalent to specifying a grading of $V$ by $\poset$, which is the approach we will take in \cref{sec:mkh} and onward.

%%%%%%%%%%%%%%%%%%%%%%%%%%%%%%%%%%%%%%%%%%%%%%%%%%%%%%%%%%%%%%%%%%%%%%%%%%%%%%%%

\section{Multipunctured Khovanov homology}
\label{sec:mkh}

In this section, we define the main construction of this paper. We start by defining a $\Z^n$-grading on (the underlying module of) the Khovanov complex of a link diagram in an $n$-punctured disk, then use it to build a homology theory in a way that is analogous to annular Khovanov homology.

Let $\I = [0,1]$ be the unit interval, let $\Sigma = \D^2\setminus\{p_1,\dots,p_n\}$ be an oriented disk with $n$ punctures, and let $L \subset \Sigma \cross \I$ be any link (considered up to ambient isotopy). Let $D \subset \Sigma$ denote the link diagram obtained by projecting $L$ onto $\Sigma$ (with small perturbations to remove triple points and tangencies if necessary), and let $D$ be the link diagram in $\R^2$ induced by the inclusion $\Sigma \inj \R^2$.

After picking an ordering of the crossings in $D$, we can construct the Khovanov complex $\CKh(D)$ as in \cref{sec:khovanov}. Let $\field = \Q$ be our coefficient ring. We would like to construct an $H_1(\Sigma; \Z)$-grading $\g^\Sigma$ on the module $\CKh(D)$ that ``remembers'' the extra structure of $D \subset \Sigma$; it suffices to specify such a grading on the generators. Let $x\in \CKh(D)$ be a generator represented by the enhanced state $\set{(C_i, s_i)}_i$; the $H_1(\Sigma;\Z)$-grading of $x$ is then defined to be
\begin{equation}\label{eq:grading}
    \g^\Sigma(x) \defeq \sum_{i=1}^j s_i \equivclass{C_i} \qquad \in H_1(\Sigma;\Z)
\end{equation}
where $\equivclass{C_i}$ is the homology class represented by $C_i$ in $\Sigma$.

Note that the differential on $\CKh(D)$ does not respect the $\g^\Sigma$. We claim that the differential on $\CKh(D)$ does, however, respect the filtration $\filt^{\Sigma}$ induced by $\g^\Sigma$, if we view $H_1(\Sigma;\Z)$ as a poset in a certain way. First, choose the basis of $H_1(\Sigma;\Z)$ represented by positively-oriented loops around each puncture. We use this basis to identify $H_1(\Sigma;\Z)$ with $\Z^n$. Then, give it the product partial order induced by the usual total order on $\Z$, i.e.
\begin{equation*}
    (i_1, i_2, \dots, i_n) \le (j_1, j_2, \dots, j_n) \iff (i_1 \le j_1) \land (i_2 \le j_2) \land \dots \land (i_n \le j_n)
\end{equation*}
 Our claim is thus:

\begin{lemma}\label{lemma:proof-filtered-differential}
The Khovanov differential $\diff$ respects the filtration $\filt^\Sigma$ on $\CKh(D)$.
\end{lemma}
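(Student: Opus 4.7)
The plan is to reduce the claim to the analogous, already-known statement for annular Khovanov homology. The key observation is that the product partial order on $\Z^n \iso H_1(\Sigma;\Z)$ is defined coordinate-wise, so it suffices to verify that $\diff$ respects each of the $n$ coordinate projections of $\g^\Sigma$ separately, viewing each copy of $\Z$ with its usual total order.

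To set this up, for each puncture $p_i$ I would introduce the annulus $A_i \defeq \D^2 \setminus \set{p_i}$ obtained from $\Sigma$ by filling in every puncture except $p_i$. The inclusion $\Sigma \inj A_i$ induces a map $H_1(\Sigma;\Z) \to H_1(A_i;\Z)$ which, under the basis of positive loops around punctures chosen above, is exactly the projection $\Z^n \to \Z$ onto the $i$-th coordinate. Under this identification, the $i$-th coordinate of $\g^\Sigma(x)$ for any enhanced state $x = \set{(C_j,s_j)}_j$ agrees with the annular grading of $x$ computed for $D$ viewed as a diagram inside $A_i$, since pushing a homology class forward under the fill-in map simply extracts its $i$-th coordinate.

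Next I would apply Roberts' theorem from \cref{sec:akh}: for each $i$, the Khovanov differential is filtered with respect to the annular grading on $A_i$. Therefore, for any enhanced state $y$ appearing with nonzero coefficient in $\diff(x)$, we have $\g^\Sigma(y)_i \le \g^\Sigma(x)_i$ for every $i$. By the coordinate-wise definition of the product order on $\Z^n$, this gives $\g^\Sigma(y) \le \g^\Sigma(x)$, which is equivalent to the desired inclusion $\diff(\filt^\Sigma_a \CKh(D)) \subseteq \filt^\Sigma_a \CKh(D)$ for every $a \in \Z^n$.

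I do not anticipate a major obstacle here, since the hard work is already done by Roberts in the single-puncture case and the multipunctured case follows just by decomposing the grading coordinate-wise. The one point that needs care is the identification that the $i$-th coordinate of $\g^\Sigma$ coincides with the annular grading on $A_i$, but this is immediate from the definitions once the basis of $H_1(\Sigma;\Z)$ has been fixed. A self-contained alternative would be a direct case analysis of the four types of saddle moves (disjoint and nested merges, disjoint and nested splits), using that with a counterclockwise orientation convention each circle $C$ has homology class $\equivclass{C} = \chi_S \in \set{0,1}^n$, where $S$ is the set of punctures it encloses; the mildly subtle case would be the nested merge, where one has $\equivclass{C_{12}} = \equivclass{C_2} - \equivclass{C_1}$ rather than $\equivclass{C_1} + \equivclass{C_2}$, but a short calculation on each of the possible sign patterns of $v_\pm \tensor v_\pm \mapsto v_\pm$ still shows the grading change is non-positive in every coordinate.
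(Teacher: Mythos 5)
Your proposal is correct and matches the paper's own proof: both reduce to the annular case by filling in all punctures except $p_i$, invoke Roberts' result that the Khovanov differential respects the annular filtration, and conclude via the coordinate-wise (product) order on $\Z^n \iso H_1(\Sigma;\Z)$. Your explicit identification of the $i$-th coordinate of $\g^\Sigma$ with the annular grading on $A_i$ is just a slightly more careful spelling-out of the same argument.
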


\begin{proof}
We want to show that the differential and filtration on $(\CKh(D),\diff, \filt^\Sigma)$ are compatible, i.e.\ that $\diff (\filt^\Sigma_p \CKh(D)) \subseteq \filt^\Sigma_p \CKh(D)$. Thankfully, this proof comes almost for free from annular Khovanov homology. Choose a puncture $p_i$ and consider $D \subset \Sigma$ as a diagram in $\Sigma' = \D^2 \setminus \set{p_i}$; essentially, we are ``forgetting'' all but one of of the punctures. Let $\filt'$ be the filtration on $\CKh(D \subset \Sigma')$. $D \subset \Sigma'$ is an annular link diagram; in \cite{roberts2013}, it was shown that the Khovanov differential is compatible with a certain filtration on the Khovanov complex of an annular link. When the diagram $D \subset \Sigma'$ only has one puncture, the filtration induced by our grading $\g^\Sigma$ is equivalent to their filtration, so $\diff (\filt_p' \CKh(D)) \subseteq \filt_p' \CKh(D)$. Since this is true regardless of which puncture $p_i$ we pick, this implies that $\diff (\filt^\Sigma_p \CKh(D)) \subseteq \filt^\Sigma_p \CKh(D)$. Thus, the filtration on $\CKh(D \subset \Sigma)$ is compatible with the chain complex structure.
\end{proof}

Note that, since the complex $\CKh(D)$ is finitely-generated over $\Q$, the set of $\g^\Sigma$-gradings of generators of $\CKh(D)$ is finite, and in particular bounded. Therefore, there are elements $s, t \in \Z^n$ such that $s < \g^\Sigma(x) < t$ for all generators $x$ of $\CKh(D)$. This means that $\filt^\Sigma_s = 0$ and $\filt^\Sigma_t = \CKh(D)$, so our filtration is bounded. Additionally, since our filtration is induced by a grading, by \cref{sec:filtered-vector-spaces} it is distributive as well.

Now that we know that $\filt^\Sigma$ is a bounded, distributive filtration on the chain complex $\CKh(D)$, we can define our invariant.

\begin{definition}\label{def:invariant}
The link invariant $\MKh(L)$\footnote{We chose $\MKh$ for lack of a better abbreviation. The ``M'' stands for ``multipunctured'', not ``Mikhail''.} is defined to be the homology of the associated graded complex of $\CKh(D)$ with respect to $\filt^{\Sigma}$.
\begin{equation*}
    \MKh(L) \defeq H^*(\gr(\CKh(D),\diff, \filt^{\Sigma}))
\end{equation*}
\end{definition}
That the above is actually a well-defined link invariant is the content of \cref{thm:mkh-invariance}, which we prove in \cref{sec:invariance}.

\begin{example}

Let $L=\multiHopfLink$. Here, the black dots represent punctures. We again construct a cube of resolutions:

\cd{
\multiHopfLinkab \ar[r] \& \multiHopfLinkbb \&
V \ar[r,"\Delta_2"] \& V^{\tensor 2} \\
\multiHopfLinkaa \ar[u] \ar[r] \& \multiHopfLinkba \ar[u] \&
V^{\tensor 2} \ar[u,"m_1"] \ar[r,"m_2"] \& V \ar[u,"\Delta_1"]
}

The differential can be written explicitly as:
\cd[row sep=huge, column sep=huge]{
\Q^4 \ar[r,"\spmat{1 & 0 & 0 & 0 \\ 0 & 1 & 1 & 0 \\ 1 & 0 & 0 & 0 \\ 0 & 1 & 1 & 0}"] \& \Q^4 \ar[r,"\spmat{0 & 0 & 0 & 0 \\ -1 & 0 & 1 & 0 \\ -1 & 0 & 1 & 0 \\ 0 & -1 & 0 & 1}"] \& \Q^4
}
Next, we calculate the differential on the associated graded complex:
\cd[row sep=huge, column sep=huge]{
\Q^4 \ar[r,"\spmat{1 & 0 & 0 & 0 \\ 0 & 0 & 0 & 0 \\ 1 & 0 & 0 & 0 \\ 0 & 0 & 0 & 0}"] \& \Q^4 \ar[r,"\spmat{0 & 0 & 0 & 0 \\ -1 & 0 & 1 & 0 \\ 0 & 0 & 0 & 0 \\ 0 & -1 & 0 & 1}"] \& \Q^4
}
As in the annular case, this can be thought of as removing all parts of the differential that lower the $\g^\Sigma$-grading. For example, in the original complex, we have that
\eq{
    \diff(\underset{v_+}{\multiHopfLinkab}) = \underset{v_+ \tensor v_-}{\multiHopfLinkbb} + \underset{v_- \tensor v_+}{\multiHopfLinkbb}
}
but in the associated graded complex, we have
\eq{
    \diff(\underset{v_+}{\multiHopfLinkab}) = \underset{v_+ \tensor v_-}{\multiHopfLinkbb}
}
We can calculate that:
\eq{
    \MKh^{-2}(L) &\iso \Q^2\{-4\} \directsum \Q\{-6\} \\
    \MKh^{-1}(L) &\iso \Q\{-4\} \\
    \MKh^0(L) &\iso \Q \directsum \Q\{-2\}
}
Using subscripts to denote the $\g^\Sigma$-grading, we can refine this further to:
\eq{
    \MKh^{-2}(L) &\iso \Q_{(1,-1)}\{-4\} \directsum \Q_{(-1,1)}\{-4\} \directsum \Q_{(-1,-1)}\{-6\} \\
    \MKh^{-1}(L) &\iso \Q_{(-1,-1)}\{-4\} \\
    \MKh^0(L) &\iso \Q_{(1,1)} \directsum \Q_{(-1,-1)}\{-2\}
}

\end{example}

%%%%%%%%%%%%%%%%%%%%%%%%%%%%%%%%%%%%%%%%%%%%%%%%%%%%%%%%%%%%%%%%%%%%%%%%%%%%%%%%

\section{Spectral sequences}
\label{sec:spectral-sequences}

One notable fact about annular Khovanov homology is that, given a link $L$, there is a spectral sequence with $E_1$ page isomorphic to $\AKh(L)$ that converges to $\Kh(L)$. It is natural to ask if our homology fits into similar spectral sequences as well.

Let $\Sigma$ be an $n$-punctured disk, let $L$ be a link in $\Sigma$, and let $\Sigma'$ be a disk with some subset of $m \le n$ of these punctures. We can view $L$ as a link in $\Sigma'$ by the natural inclusion map $\Sigma \inj \Sigma'$. Note that $\MKh(L \subset \Sigma)$ is naturally a $H_1(\Sigma; \Z)$-graded module, and $\MKh(L \subset \Sigma')$ is likewise graded by $H_1(\Sigma'; \Z)$. In order to use the classical results about spectral sequences to compare these two, we will first turn them into $\Z$-graded modules. Let $\{v_1,v_2,\dots,v_n\}$ be the basis of $H_1(\Sigma)$ where $v_i$ corresponds to a loop around the $i$-th puncture. Let $\epsilon: H_1(\Sigma; \Z) \to \Z$ be the map of posets that sends an element to the sum of its components:
\begin{equation}\label{eq:epsilon}
    \epsilon: \sum_{1 \le i \le n} c_iv_i \mapsto \sum_{1 \le i \le n} c_i
\end{equation}

Let $D$ be a diagram for $L$. We have a $H_1(\Sigma;\Z)$-grading $\g^\Sigma$ defined on generators of $\CKh(D)$ above in \cref{eq:grading}; taking the  composition $\epsilon \comp \g^\Sigma$ gives us a $\Z$-grading. Since the grading $\g^\Sigma$ induced a filtration on $\CKh(D)$ as a chain complex, and $\epsilon$ is a map of posets, we have that $\epsilon \comp \g^\Sigma$ induces a $\Z$-filtration on $\CKh(D)$. We will denote this filtration by $F^\Sigma$ (a flatter $\filt$ for a ``flattened'' filtration). Similarly, we can define a map $\epsilon': H_1(\Sigma';\Z) \to \Z$ and a corresponding induced filtration $F^{\Sigma'}$ on $\CKh(D)$.

First, we need a lemma to help us compare these two filtrations on $\CKh(D)$:

\begin{lemma}\label{lemma:two-filtrations}
Let $\filt^1, \filt^2$ be two $\poset$-filtrations on a module $M$. Then $\filt^1$ induces a $\poset$-filtration on the associated graded module of $M$ with respect to $\filt^2$. Additionally, if the sublattice of $\submodules(M)$ generated by $\filt^1$ and $\filt^2$ is distributive, then this new filtration is also distributive.
\end{lemma}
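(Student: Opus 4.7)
The plan is to define the induced filtration on $\gr^{\filt^2}(M)$ piece by piece on each graded component, then verify the filtration axioms and distributivity separately. For each $b \in \poset$, let $\iota_b: \filt^2_b M \hookrightarrow M$ denote inclusion and $\pi_b: \filt^2_b M \twoheadrightarrow \gr^{\filt^2}_b(M)$ the canonical quotient. Composing the induced filtration (coming from $\iota_b$) with the quotient filtration (coming from $\pi_b$) in the sense of \cref{sec:z-filtrations} yields the natural candidate
$$\filt^1_a\bigl(\gr^{\filt^2}_b(M)\bigr) := \pi_b\bigl(\filt^1_a M \cap \filt^2_b M\bigr),$$
and summing over $b \in \poset$ gives $\filt^1_a\bigl(\gr^{\filt^2}(M)\bigr) := \bigoplus_b \filt^1_a\bigl(\gr^{\filt^2}_b(M)\bigr)$. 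Monotonicity in $a$ is immediate from monotonicity of $\filt^1_{-}$, and boundedness transfers directly: if $\filt^1_s M = 0$ and $\filt^1_t M = M$, the corresponding pieces on $\gr^{\filt^2}(M)$ are $0$ and $\gr^{\filt^2}(M)$.

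The distributivity claim is where the real work lies, and I would handle it via the basis characterization from \cref{sec:filtered-vector-spaces}. Assuming the sublattice $L \subseteq \submodules(M)$ generated by the images of $\filt^1$ and $\filt^2$ is distributive, there exists a basis $\beta$ of $M$ such that every submodule in $L$ is the span of a subset of $\beta$. Writing $\beta^1_a := \beta \cap \filt^1_a M$ and $\beta^2_b := \beta \cap \filt^2_b M$, each $\sum_{c<b}\filt^2_c M$ is also spanned by a subset of $\beta$ (namely $\bigcup_{c<b}\beta^2_c$), so $\overline{\beta}_b := \pi_b\bigl(\beta^2_b \setminus \bigcup_{c<b}\beta^2_c\bigr)$ is a basis of $\gr^{\filt^2}_b(M)$, and $\overline{\beta} := \bigsqcup_b \overline{\beta}_b$ is a basis of $\gr^{\filt^2}(M)$. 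The key identification I would then verify is
$$\filt^1_a\bigl(\gr^{\filt^2}_b(M)\bigr) = \mathrm{span}\Bigl(\pi_b\bigl((\beta^1_a \cap \beta^2_b) \setminus \textstyle\bigcup_{c<b}\beta^2_c\bigr)\Bigr),$$
so that the induced filtration on $\gr^{\filt^2}(M)$ is cut out by subsets of the fixed basis $\overline{\beta}$. Invoking \cref{sec:filtered-vector-spaces} in the reverse direction, the generated sublattice in $\submodules(\gr^{\filt^2}(M))$ is then isomorphic to a sublattice of $\subsets(\overline{\beta})$, hence distributive.

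The main obstacle will be bookkeeping around the quotient: I need to confirm that for each $A \in L$, the map $\pi_b$ sends $\beta \cap A \cap \filt^2_b M$ onto a basis of $\pi_b(A \cap \filt^2_b M)$ after removing those basis elements lying in $\sum_{c<b}\filt^2_c M$. This in turn reduces to checking that $(A \cap \filt^2_b M) \cap \sum_{c<b}\filt^2_c M$ is again spanned by a subset of $\beta$, which is precisely where the distributivity of $L$ is used: it guarantees that this intersection equals $\sum_{c<b}(A \cap \filt^2_c M)$, a sum of lattice elements, and so it lies in $L$. Without distributivity, the kernel of $\pi_b$ restricted to $A \cap \filt^2_b M$ could fail to be spanned by basis elements, and the argument breaks down. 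Once this identification is in hand, the rest of the proof is a formal matter of direct-sum bookkeeping.
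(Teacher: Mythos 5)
Your construction of the induced filtration is exactly the paper's: for each $b \in \poset$ you intersect with $\filt^2_b M$ and push forward along the quotient $\pi_b \colon \filt^2_b M \to \gr[b](M)$, i.e.\ $\filt^1_a(\gr[b](M)) = \pi_b(\filt^1_a M \intersect \filt^2_b M)$, and then sum over $b$; monotonicity and boundedness are immediate, as you say. (You even use the correct kernel $\sum_{c<b}\filt^2_c M$ for the general poset case.)

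Where you diverge is the distributivity claim, and here there is both a genuine difference of method and a gap in generality. The paper argues lattice-theoretically: intersecting with $\filt^2_b M$ and taking images under $\pi_b$ carry the distributive sublattice of $\submodules(M)$ generated by $\filt^1$ and $\filt^2$ to a distributive sublattice of $\submodules(\gr[b](M))$ (these operations behave as lattice homomorphisms on that sublattice precisely because it is distributive --- note that $(A+B)\intersect C = (A\intersect C)+(B\intersect C)$ and the analogous identity for quotient images are themselves distributivity statements), so no basis is ever chosen and the argument works for arbitrary $\field$-modules. Your argument instead invokes the common-basis characterization of distributive lattices of \emph{subspaces} from \cref{sec:filtered-vector-spaces}: you pick a basis $\beta$ of $M$ compatible with every element of the generated sublattice, descend it to a basis of $\gr^{\filt^2}(M)$, and check that each $\filt^1_a(\gr[b](M))$ is a span of a subset of that basis, so the generated sublattice embeds in a Boolean lattice of subsets. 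Within its scope this is correct --- the bookkeeping step you flag is indeed taken care of by the fact that $\sum_{c<b}\filt^2_c M$ and $A \intersect \filt^2_b M$ both lie in the generated sublattice, hence are spanned by subsets of $\beta$, so the kernel of $\pi_b$ restricted to $A \intersect \filt^2_b M$ is basis-spanned. But the common-basis statement is only available over a field, whereas the lemma is stated for $\poset$-filtrations on a \emph{module} $M$, and the paper's ambient setting (e.g.\ \cref{thm:mkh-invariance}) allows arbitrary commutative coefficient rings where submodule lattices need not be cut out by any basis. So as a proof of the lemma as stated, your distributivity argument covers only the vector-space case; to close the gap you would either restrict the statement to fields (sufficient for the application to $\CKh(D;\Q)$ in \cref{sec:spectral-sequences}) or replace the basis argument by the paper's observation that $- \intersect \filt^2_b M$ and $\submodules(\pi_b)$ preserve the distributive sublattice in question.
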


\begin{proof}
First, we describe the filtration that $\filt_{-}^1$ induces on $\filt_b^2 M$:
\begin{equation*}
    \filt_a^1 (\filt_b^2 M) \defeq \filt_a^1 M \intersect \filt_b^2 M
\end{equation*}
Let $\pi: \filt_b^2 M \to \gr[b]^2 M$ be the natural quotient map. We can use $\pi$ to transfer the induced filtration onto the associated graded object:
\begin{equation*}
    \filt_p^1(\gr[q]^2 M) \defeq \pi(\filt_p^1 (\filt_q^2 M)) = \frac{\filt_p^1 (\filt_q^2 M) + \filt_{q-1}^2 M}{\filt_{q-1}^2 M}
\end{equation*}
We can then take the direct sum of each $\gr[b](M)$ to get the desired filtration on $\gr(M)$.

If $\filt^1$ and $\filt^2$ distribute, then
\begin{equation*}
    - \intersect \filt_b^2: \submodules(M) \to \submodules(\filt_b^2)
\end{equation*}
and
\begin{equation*}
    \submodules(\pi): \submodules(\filt_b^2 M) \to \submodules(\gr[b](M))
\end{equation*}
are lattice homomorphisms, and therefore preserve distributive sublattices. Therefore, $\filt_{-}^1 \gr[b](M) = \submodules(\pi)(\filt_{-}^1 M \intersect \filt_b^2 M)$ will be distributive as well.
\end{proof}

The above lemma also applies to modules with extra structure, for example chain complexes. It turns out that, if two filtrations on a chain complex are suitably ``compatible'', we get a spectral sequence between the homologies of their associated graded complexes:

\begin{lemma}\label{lemma:two-filtrations-spectral-sequence}
Let $\filt^1, \filt^2$ be two $\Z$-filtrations on a chain complex $C$, and let $\gr^1, \gr^2$ denote their associated graded complexes. If $\gr^1(\gr^2(C)) \iso \gr^1(C)$ as graded complexes, then there is a spectral sequence with $E_1$ page isomorphic to $H_*(\gr^1(C))$ that converges to $\gr^1(H_*(\gr^2(C)))$.
\end{lemma}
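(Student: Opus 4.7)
The plan is to take the classical spectral sequence of a filtered chain complex and apply it to the chain complex $\gr^2(C)$ equipped with the filtration induced on it by $\filt^1$. More precisely, by \cref{lemma:two-filtrations} (applied in the chain-complex setting, which is immediate since the maps involved are all filtered maps of complexes), $\filt^1$ descends to a well-defined $\Z$-filtration on $\gr^2(C)$, which we will also call $\filt^1$ by abuse of notation. Both filtrations on $\gr^2(C)$ respect the differential, and boundedness is inherited from boundedness of $\filt^1$ and $\filt^2$ on $C$, so we are in the standard setting of a bounded $\Z$-filtered chain complex.

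The standing hypothesis is that the classical spectral sequence of a bounded filtered chain complex applied to $(\gr^2(C), \filt^1)$ gives a spectral sequence whose $E_1$ page is $H_*(\gr^1(\gr^2(C)))$ and that converges to $\gr^1(H_*(\gr^2(C)))$. The identification $\gr^1(\gr^2(C)) \iso \gr^1(C)$ assumed in the hypothesis immediately identifies the $E_1$ page with $H_*(\gr^1(C))$. The abutment $\gr^1(H_*(\gr^2(C)))$ is exactly what the statement claims as the target of convergence, so there is nothing more to do on that end.

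The main steps in order are therefore: (i) verify that $\filt^1$ indeed induces a bounded $\Z$-filtration on the chain complex $\gr^2(C)$, using \cref{lemma:two-filtrations}; (ii) invoke the standard spectral sequence of a bounded $\Z$-filtered chain complex to obtain the spectral sequence with $E_1 = H_*(\gr^1(\gr^2(C)))$ converging to $\gr^1(H_*(\gr^2(C)))$; (iii) use the hypothesis $\gr^1(\gr^2(C)) \iso \gr^1(C)$ to rewrite the $E_1$ page as $H_*(\gr^1(C))$.

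The only genuinely nontrivial point is (i): we need the induced differential on $\gr^2(C)$ to remain compatible with the induced $\filt^1$, which follows because on $C$ the original differential respects both filtrations simultaneously, and passing to the $\filt^2$-quotient preserves the inclusion $\diff(\filt^1_a \cap \filt^2_b) \subseteq \filt^1_a \cap \filt^2_b$. Everything else is a direct invocation of the classical machinery and a rewrite via the hypothesis.
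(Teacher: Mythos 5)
Your proposal is correct and follows essentially the same route as the paper: the paper's proof also applies the classical spectral sequence of a filtered complex to $\gr^2(C)$ equipped with the filtration induced by $\filt^1$, identifies the $E_1$ page with $H_*(\gr^1(C))$ via the hypothesis, and notes convergence to $\gr^1(H_*(\gr^2(C)))$. Your additional care in step (i), checking that the induced filtration is well-defined, bounded, and compatible with the differential, is a reasonable elaboration of details the paper leaves implicit.
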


\begin{proof}
Consider the chain complex $\gr^2(C)$ with filtration induced by $\filt^1$. The spectral sequence of this filtered complex has $E_1$ page isomorphic to
\begin{equation*}
    E_1 \iso H_*(\gr^1(\gr^2(C))) \iso H_*(\gr^1(C))
\end{equation*}
Additionally, the spectral sequence converges to
\begin{equation*}
    E_\infty \iso \gr^1(H_*(\gr^2(C))) \qedhere
\end{equation*}
\end{proof}

Therefore, if we could prove that $\gr^\Sigma(\gr^{\Sigma'}(C)) \iso \gr^\Sigma(C)$, then we would have a spectral sequence relating our two homologies. This last lemma will help with proving that:

\begin{lemma}\label{lemma:associated-graded}
Let $\filt^1, \filt^2$ be two filtrations on a chain complex $(C,\diff)$ over a field $\field$, and let $\g^1, \g^2$ be their respective gradings on $C$. If, for any $x,y\in C$ with $y$ contained in $\diff(x)$, we have that $\g^1(y)=\g^1(x) \implies \g^2(y)=\g^2(x)$, then there is an isomorphism of $\Z$-graded chain complexes $\gr^1 ( \gr^2 (C)) \iso \gr^1 (C)$.
\end{lemma}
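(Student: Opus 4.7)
My plan is to reduce the statement to a direct matrix-entry comparison by choosing a basis of $C$ that is simultaneously homogeneous for $\g^1$ and $\g^2$. Since $\field$ is a field and each $\filt^i$ is bounded and distributive, the discussion in \cref{sec:filtered-vector-spaces} lets us identify each $\filt^i$ with the $\Z$-grading $\g^i$ on $C$. The distributivity hypothesis on pairs of filtrations (applied to the sublattice generated by $\filt^1$ and $\filt^2$) ensures that the common refinement of these two gradings gives a direct sum decomposition of $C$, so one can pick a basis $\beta$ of $C$ in which every element is bihomogeneous, i.e.\ has a well-defined value under both $\g^1$ and $\g^2$.

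Using such a $\beta$, I would write the differential as $\diff(x) = \sum_{y \in \beta} c_{xy} \, y$ for each $x \in \beta$, with $c_{xy} \in \field$. Because $\diff$ preserves each filtration, every nonzero $c_{xy}$ satisfies $\g^1(y) \le \g^1(x)$ and $\g^2(y) \le \g^2(x)$. The associated graded differential $\gr^1(\diff)$ on $\gr^1(C)$ is obtained, via this basis, by keeping exactly those matrix entries with $\g^1(y) = \g^1(x)$; the differential on $\gr^2(C)$ similarly keeps entries with $\g^2(y) = \g^2(x)$; and iterating, $\gr^1(\gr^2(\diff))$ keeps exactly those entries for which both equalities hold.

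Now the hypothesis reads precisely as: whenever $c_{xy} \ne 0$ (so $y$ appears in $\diff(x)$) and $\g^1(y) = \g^1(x)$, one also has $\g^2(y) = \g^2(x)$. Thus the set of matrix entries of $\diff$ retained by $\gr^1(\diff)$ coincides with the set retained by $\gr^1(\gr^2(\diff))$, while the underlying $\Z$-graded vector spaces of $\gr^1(C)$ and $\gr^1(\gr^2(C))$ are both identified with $\bigdirectsum_n \mathrm{span}\{x \in \beta : \g^1(x) = n\}$. The identity map on $\beta$ therefore furnishes the desired isomorphism $\gr^1(\gr^2(C)) \iso \gr^1(C)$ of $\Z$-graded chain complexes.

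The only real subtlety, and the step I expect to have to be careful about, is justifying the existence of the bihomogeneous basis $\beta$: one must check that the lattice generated in $\submodules(C)$ by $\{\filt_a^1 C\}_a \cup \{\filt_b^2 C\}_b$ is distributive (so that the two gradings jointly refine to a single $\Z \cross \Z$-grading), and that the differential then has the block-triangular form described above. Once that is in place, the matrix-entry argument above is essentially mechanical.
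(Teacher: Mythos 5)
Your proposal is correct and follows essentially the same route as the paper: the paper regards $C$ as a $(\g^1,\g^2)$-bigraded module and splits $\diff$ into bidegree summands $\diff^{(i,j)}$, observing that the hypothesis forces $\gr^1(\diff)=\diff^{(0,0)}=\gr^1(\gr^2(\diff))$, which is exactly your matrix-entry comparison in a bihomogeneous basis. The one point you flag as a subtlety (existence of the bihomogeneous basis) is simply assumed in the paper's phrase ``consider $C$ as a $(\g^1,\g^2)$-bigraded module,'' and is automatic in the intended application since both gradings are defined on the same set of standard generators of $\CKh(D)$.
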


\begin{proof}
Consider $C$ as a $(\g^1,\g^2)$-bigraded module. We already know that $\gr(M)\iso M$ for any filtered vector space $M$, so we have that $\gr^1(\gr^2(C)) \iso \gr^1(\CKh(D))$ as modules. The interesting part is proving the analogous fact in the presence of a differential.

Our bigrading allows us to decompose $\diff$ as:
\eq{
    \diff = \sum_{i, j} \diff^{(i,j)}
}
where $\diff^{(i.j)}$ is the summand of $\diff$ that changes the $\g^1$-grading by $i$ and the $\g^2$-grading by $j$. We can use this notation to express the differentials on the associated graded complexes as well:
\eq{
    \gr^1(\diff) &= \sum_{j} \diff^{(0,j)} = \diff^{(0,0)} \\
    \gr^2(\diff) &= \sum_{i} \diff^{(i,0)} = \sum_{i} \diff^{(i,0)}
}
Therefore, we see that $\gr^1(\diff)$ is a summand of $\gr^2(\diff)$, so
\eq{
\gr^1(\gr^2(\diff)) = \diff^{(0,0)} = \gr^1(\diff)
}
This allows us to conclude that $\gr^1(\gr^2(\CKh(D))) \iso \gr^1(\CKh(D))$ as $\Z$-graded chain complexes with respect to the $\g^1$-grading.
\end{proof}

With this in mind, we can now relate $\MKh(L\subset 
\Sigma\cross \I)$ and $\MKh(L \subset \Sigma'\cross \I)$.

\begin{proof}[Proof of \cref{thm:spectral-sequence}]
We would like to verify that $F^\Sigma$ and $F^{\Sigma'}$ satisfy the conditions of \cref{lemma:associated-graded}. Recall that, with respect to a single puncture, the Khovanov differential either preserves the $k$-grading (equivalently, the $\g^A$-grading), or lowers it by $2$. Therefore, $\diff$ can lower the $(\epsilon\comp\g^{\Sigma})$-grading by $0,2,4,\dots$, and can lower the $(\epsilon\comp\g^{\Sigma'})$-grading by at most that amount. If $x,y\in \CKh(D)$ are such that $y$ is contained in $\diff(x)$, we have that
\begin{equation*}
(\epsilon\comp\g^\Sigma)(y) - (\epsilon\comp\g^\Sigma)(x) \le (\epsilon\comp\g^{\Sigma'})(y) - (\epsilon\comp\g^{\Sigma'})(x) \le 0
\end{equation*}
and therefore
\begin{equation*}
(\epsilon\comp\g^\Sigma)(y) - (\epsilon\comp\g^\Sigma)(x) = 0\implies (\epsilon\comp\g^{\Sigma'})(y) - (\epsilon\comp\g^{\Sigma'})(x) = 0
\end{equation*}

Now we can use \cref{lemma:associated-graded} combined with \cref{lemma:two-filtrations-spectral-sequence} to conclude that there exists a spectral sequence with
\begin{equation*}
    E_1 \iso H^*(\gr^{\Sigma}(\CKh(D))) \iso \MKh(L \subset \Sigma\cross \I)
\end{equation*}
that converges to
\begin{equation*}
    E_\infty \iso \gr^{\Sigma} H^*(\gr^{\Sigma'}(\CKh(D))) \iso \MKh(L \subset \Sigma' \cross \I)
\end{equation*}
where both homologies are graded by $\epsilon \comp \g^\Sigma$.
\end{proof}

%%%%%%%%%%%%%%%%%%%%%%%%%%%%%%%%%%%%%%%%%%%%%%%%%%%%%%%%%%%%%%%%%%%%%%%%%%%%%%%%

\section{Relationships with other link homology theories}
\label{sec:relationships}

\subsection{(Annular) Khovanov homology}
\label{sec:akh-connection}

We can use the spectral sequences defined in \cref{sec:spectral-sequences} to draw a few different connections to Khovanov homology and annular Khovanov homology. First, we identify special cases where $\MKh(L)$ is already isomorphic to a previously-defined link invariant.

\begin{lemma}\label{lemma:khovanov}
Let $\D$ be a disk, and let $D$ be a diagram for a link $L \subset \D \cross \I$. Then $\MKh(L \subset \D \cross \I) \iso \Kh(L)$.
\end{lemma}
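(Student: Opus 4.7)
The plan is to observe that when $\Sigma = \D$ has no punctures, the filtration $\filt^\Sigma$ defined in \cref{sec:mkh} is essentially trivial, and therefore the associated graded complex coincides with the original Khovanov complex.

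First, I would note that $H_1(\D; \Z) = 0$ since the disk is contractible. Consequently, the grading $\g^\Sigma$ defined in \cref{eq:grading} takes only one value (the trivial element $0$), on every generator of $\CKh(D)$. Identifying $H_1(\D;\Z)$ with $\Z^0$, the indexing poset for our filtration is the one-element poset $\{0\}$.

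Next, I would unpack the definition of the filtration and its associated graded. The filtration $\filt^\Sigma$ has only a single nontrivial level, namely $\filt^\Sigma_0 \CKh(D) = \CKh(D)$. Since the poset $\{0\}$ has no elements strictly less than $0$, the empty sum $\sum_{b < 0} \filt_b^\Sigma \CKh(D)$ is zero, so
\begin{equation*}
    \gr[0](\CKh(D)) = \frac{\filt_0^\Sigma \CKh(D)}{\sum_{b < 0} \filt_b^\Sigma \CKh(D)} = \frac{\CKh(D)}{0} = \CKh(D).
\end{equation*}
Similarly, the induced differential $\gr(\diff)$ is just $\diff$ itself, so $\gr(\CKh(D), \diff, \filt^\Sigma) = (\CKh(D), \diff)$ as chain complexes.

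Taking homology then yields
\begin{equation*}
    \MKh(L \subset \D \cross \I) = H^*(\gr(\CKh(D), \diff, \filt^\Sigma)) = H^*(\CKh(D), \diff) = \Kh(L),
\end{equation*}
as desired. There is no real obstacle here; the lemma is essentially a sanity check that our construction correctly reduces to ordinary Khovanov homology when the underlying surface has trivial first homology.
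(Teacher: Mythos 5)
Your proposal is correct and follows the same route as the paper: the paper likewise observes that $H_1(\D;\Z) \iso 0$ makes $\g^\D$ a trivial grading, so the associated graded complex is the Khovanov complex itself and its homology is $\Kh(L)$. Your write-up simply unpacks the definition of $\gr$ in more detail, which is fine.
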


\begin{proof}
Since $H_1(\D;\Z) \iso 0$, $\g^\D$ is a trivial grading, and therefore
\begin{equation*}
    \MKh(L \subset \D \cross \I) = H^*(\gr^\D(\CKh(D))) \iso H^*(\CKh(D)) = \Kh(L)
    \qedhere
\end{equation*}
\end{proof}

\begin{lemma}\label{lemma:annular-khovanov}
Let $A$ be an annulus, and let $L \subset A\cross \I$ be an annular link. Then $\MKh(L \subset A\cross \I) \iso \AKh(L)$.
\end{lemma}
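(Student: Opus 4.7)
The plan is to show that when $\Sigma = A$ is an annulus (a disk with a single puncture), the construction defining $\MKh$ reduces literally to the construction defining $\AKh$ that we reviewed in \cref{sec:akh}. Concretely, I would check that the $H_1$-grading $\g^{\Sigma}$ coincides with the $k$-grading used by Roberts, conclude that the induced filtrations agree, and then invoke the definitions of both invariants.

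First, since $A = \D^{2}\setminus\{p\}$ has a single puncture, $H_{1}(A;\Z)\iso \Z$ with generator the positively oriented loop around $p$, which is exactly the basis we chose in \cref{eq:grading}. Comparing the formula
\[
    \g^{\Sigma}(x) = \sum_{i} s_{i}\,[C_{i}] \quad\in H_{1}(A;\Z)
\]
with the definition of $k$ in \cref{sec:akh},
\[
    k(x) = \sum_{i} s_{i}\,[C_{i}] \quad\in H_{1}(A;\Z),
\]
we see that they are identical on enhanced states, and hence as gradings on the underlying module of $\CKh(D)$. Under the identification $H_{1}(A;\Z)\iso \Z$, the poset structure on $H_{1}(A;\Z)$ used in \cref{sec:mkh} matches the usual order on $\Z$, so the filtration $\filt^{\Sigma}$ induced by $\g^{\Sigma}$ is equal to the filtration $\filt^{A}$ induced by $k$.

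Since $\filt^{\Sigma} = \filt^{A}$ as filtrations of the same chain complex $\CKh(D)$, their associated graded complexes coincide as well, and therefore
\[
    \MKh(L\subset A\cross \I) = H^{*}(\gr(\CKh(D),\diff,\filt^{\Sigma})) = H^{*}(\gr(\CKh(D),\diff,\filt^{A})) = \AKh(L).
\]
There is no real obstacle here; the content of the lemma is just that the two definitions are literally the same in the one-puncture case, once one unwinds the identification $H_{1}(A;\Z)\iso\Z$ and notes that the poset structures agree. The compatibility of $\diff$ with $\filt^{\Sigma}$ needed to even form the associated graded complex is exactly Roberts's result invoked in the proof of \cref{lemma:proof-filtered-differential}, so nothing further is required.
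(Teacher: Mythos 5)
Your proposal is correct and matches the paper's argument: the paper likewise observes that for a single puncture the $\g^{A}$-grading coincides with the $k$-grading (hence the filtrations and associated graded complexes agree), and concludes $\MKh(L\subset A\cross\I)\iso\AKh(L)$ directly from the definitions. Your version simply spells out the identification $H_{1}(A;\Z)\iso\Z$ and the agreement of poset structures a bit more explicitly, which is fine.
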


\begin{proof}
Since $H_1(A;\Z) \iso \Z$, we get that the $k$- and $\g^A$-gradings are identical. Therefore, for a diagram $D$ of $L$:
\begin{equation*}
    \MKh(L\subset A\cross \I) = H^*(\gr^A(\CKh(D)) \iso H^*(\gr^k(\CKh(D))) = \AKh(D)
    \qedhere
\end{equation*}
\end{proof}

As a first application of \cref{thm:spectral-sequence}, we get a spectral sequence from $\MKh(L)$ to $\Kh(L)$:

\begin{proof}[Proof of \cref{thm:mkh-spectral-sequence}]
This follows immediately from \cref{lemma:khovanov} as a special case of \cref{thm:spectral-sequence} with $\Sigma = \Sigma$ and $\Sigma' = \D$ a disk.
\end{proof}

Additionally, this lets us derive the known spectral sequence from annular Khovanov homology to Khovanov homology as a special case of our construction.

\begin{corollary}\label{cor:annular-spectral-sequence}
For any annular link $L$, there is a spectral spectral sequence with $E_1$ page isomorphic to $\AKh(L)$ that converges to $\Kh(L)$.
\end{corollary}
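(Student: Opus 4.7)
The plan is to deduce this corollary by specializing the already-established spectral sequence of \cref{thm:mkh-spectral-sequence} to the case of an annular link. An annulus $A$ is precisely a $1$-punctured disk, so we may regard $A$ as one of the surfaces $\Sigma$ to which all the constructions of \cref{sec:mkh,sec:spectral-sequences} apply. Thus the first step is simply to invoke \cref{thm:mkh-spectral-sequence} with $\Sigma = A$: this immediately produces a spectral sequence whose $E_1$ page is isomorphic to $\MKh(L \subset A \times \I)$ (with flattened grading) and which converges to $\Kh(L)$.

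The only remaining step is an identification of the $E_1$ page. By \cref{lemma:annular-khovanov}, there is a grading-preserving isomorphism
\begin{equation*}
    \MKh(L \subset A \times \I) \iso \AKh(L),
\end{equation*}
since $H_1(A;\Z) \iso \Z$ makes the $\g^A$-grading coincide with the annular $k$-grading used by Roberts in \cite{roberts2013}. Substituting this isomorphism into the $E_1$ page of the spectral sequence obtained above yields exactly a spectral sequence with $E_1 \iso \AKh(L)$ converging to $\Kh(L)$.

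There is essentially no obstacle here: all the real work is already packaged inside \cref{thm:spectral-sequence} (whose proof in turn rests on \cref{lemma:two-filtrations,lemma:two-filtrations-spectral-sequence,lemma:associated-graded}) and inside \cref{lemma:annular-khovanov}. The only point one should verify when writing the proof cleanly is that the ``flattened'' $\Z$-grading $\epsilon \circ \g^A$ on $\CKh(D)$ really is the same as Roberts's $k$-filtration on an annular diagram, but this is immediate from \cref{eq:epsilon}: when $n=1$, the map $\epsilon\colon H_1(A;\Z) \to \Z$ is the identity, so $F^A$ is exactly the annular filtration $\filt^A$ used in \cref{sec:akh}.
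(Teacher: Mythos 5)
Your proposal is correct and follows essentially the same route as the paper: the paper deduces the corollary from \cref{thm:spectral-sequence} with $\Sigma = A$ and $\Sigma' = \D$ together with \cref{lemma:annular-khovanov}, while you simply cite \cref{thm:mkh-spectral-sequence} (which is itself that same specialization with $\Sigma' = \D$) before applying \cref{lemma:annular-khovanov}. Your extra remark that $\epsilon$ is the identity when $n=1$, so the flattened filtration agrees with Roberts's $k$-filtration, is a correct and harmless addition.
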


\begin{proof}
This follows immediately from \cref{lemma:annular-khovanov} as a special case of \cref{thm:spectral-sequence} with $\Sigma = A$ an annulus and $\Sigma' = \D$ a disk.
\end{proof}

We get a second connection to annular Khovanov homology when $\Sigma$ is a disk with multiple punctures, and we single out a particular puncture.

\begin{proof}[Proof of \cref{thm:akh-spectral-sequence}]
    This is almost like a dual case to \cref{cor:annular-spectral-sequence}. The proof follows from \cref{lemma:annular-khovanov} as a special case of \cref{thm:spectral-sequence} with $\Sigma = \Sigma$ and $\Sigma' = A$.
\end{proof}

\subsection{APS homology}
\label{sec:aps-relationship}

``APS homology'' is the name we will use to refer to the invariant of links in thickened $\I$-bundles as defined in \cite{asaeda2004}; we will denote it $\APS(L)$. Annular Khovanov homology originated as a specialization of this theory; since our homology is a generalization of annular Khovanov homology, one might expect there to be connections between our homology and APS homology.

As originally defined, $\APS(L)$ is an invariant of framed links, and therefore is not invariant under Reidemeister I moves. The homological and quantum gradings also use different conventions than we did in defining $\MKh(L)$. We can reconcile both of these differences by defining another invariant $\wt{\APS}(L)$ that is isomorphic to APS homology as a module, but is an invariant of (unframed) links and follows the same grading conventions we have used thus far.

Let $D \subset \Sigma$ be a link diagram. Let $C(\Sigma)$ denote the set of all homotopy classes of unoriented, non-trivial simple closed curves in $\Sigma$. Let $x\in\CKh(D)$ be a generator representing a complete resolution of $D$ with signed circles $\set{(C_i, s_i)}_i$. We define a grading on $\CKh(D)$ as:
\eq{
    \Phi(x) \defeq \sum_{i} s_i \bvec{C_i} \qquad \in \Z C(\Sigma) 
}

Note that the definition of $\Phi$ is very similar to that of $\g^\Sigma$, the difference being that $\Phi$ takes values in $Z C(\Sigma)$ instead of $H_1(\Sigma;\Z)$. We would like to define a variant of APS homology as the homology of the associated graded complex of $\CKh(D)$ with respect to $\Phi$; however, $\Phi$ does not induce a filtration on $\CKh(D)$ with the product partial order on $\Z C(\Sigma)$, so we need to work around this.

Let $\epsilon: \Z C(\Sigma) \to \Z$ be the map of posets defined by summing the components analogously to \cref{eq:epsilon}, i.e.
\eq{
    \epsilon: \sum_i s_i \bvec{C_i} \mapsto \sum_i s_i
}
Define a new partial order $\trianglelefteq$ on $\Z C(\Sigma)$, where $\alpha \trianglelefteq \beta$ if and only if either $\alpha = \beta$ or $\epsilon(\alpha) < \epsilon(\beta)$.

\begin{lemma}\label{lemma:aps-filtered}
The Khovanov differential is filtered with respect to $(\Z C(\Sigma), \trianglelefteq)$.
\end{lemma}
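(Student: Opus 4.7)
The plan is to show that for every generator $x \in \CKh(D)$ and every term $y$ appearing in $\diff(x)$, one has $\Phi(y) \trianglelefteq \Phi(x)$, i.e.\ either $\Phi(y) = \Phi(x)$ or $\epsilon(\Phi(y)) < \epsilon(\Phi(x))$. First, I would reduce to a single edge-map $\diff_{S,S'}$. Since such an edge-map differs from the identity only at one crossing, all circles not involved in the local saddle (together with their signs) are unchanged, so both $\Phi(x)$ and $\Phi(y)$ agree on the ``far'' circles. Hence $\Phi(y) - \Phi(x)$ depends only on the local data: a merge $m$ of two circles $(C_a,s_a), (C_b,s_b)$ into $(C_c,s_c)$, or a split $\Delta$ of $(C_a,s_a)$ into $(C_b,s_b),(C_c,s_c)$.

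Next I would establish the topological facts about simple closed curves in the punctured disk $\Sigma$ that control the cases: (i) merging two trivial circles produces a trivial circle; (ii) merging a trivial circle with a non-trivial one produces a non-trivial circle homotopic to the non-trivial input; (iii) if two non-trivial circles merge into a trivial one, they must be homotopic (they cobound an annulus); (iv) splitting a trivial circle produces two trivial circles; and (v) a non-trivial circle cannot split into two trivial circles, because $\{[\gamma_p]\}_{p}$ form a basis of $H_1(\Sigma;\Z)$ and so a non-trivial simple closed curve in $\Sigma$ has non-trivial homology class. Fact (v), combined with the homology relation $[C_a] = \pm[C_b]\pm[C_c]$ under a split, rules out the problematic sub-case.

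With these facts in hand, I would enumerate the cases. For each allowed sign pattern of $m$ (namely $(+,+)\to +$, $(+,-)\to -$, $(-,+)\to -$) and $\Delta$ (namely $+\to (+,-)+(-,+)$, $-\to(-,-)$), I compute $\Phi(y) - \Phi(x) = \sum s_{\text{out}}\bvec{C_{\text{out}}} - \sum s_{\text{in}}\bvec{C_{\text{in}}}$ using the topological identifications of classes $\bvec{\cdot}$ above (trivial circles contributing $0$ and homotopic non-trivial circles contributing identical basis vectors). A direct check shows that in every case either $\Phi(y) - \Phi(x) = 0$, or $\epsilon(\Phi(y) - \Phi(x)) \in \{-1,-2\}$ and is therefore strictly negative. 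In particular, the ``both non-trivial, merge to non-trivial'' case contributes $\Delta\epsilon = -1$, the ``both non-trivial, merge to trivial'' case contributes $\Delta\epsilon\in\{-2,0\}$ with $\Delta\Phi=0$ in the latter (using that the two inputs are homotopic), and analogously for splits.

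The main obstacle is simply the bookkeeping and the verification of the topological claim (iii), that two non-trivial simple closed curves merging to a trivial one must be homotopic in $\Sigma$; everything else reduces to routine sign calculations. Once these cases are dispatched, every summand of $\diff(x)$ is shown to lie in $\filt_{\Phi(x)}^{\trianglelefteq}\CKh(D)$, proving that $\diff$ respects the filtration.
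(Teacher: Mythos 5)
Your overall strategy---reducing to a single merge or split, noting that the far-away circles contribute equally to $\Phi(x)$ and $\Phi(y)$, and then running a case analysis on trivial versus non-trivial circles combined with the sign rules for $m$ and $\Delta$---is the same as the intended argument. However, your topological fact (iv) is false, and this is a genuine gap rather than a bookkeeping slip: in a punctured disk a trivial (null-homotopic) circle can split into two \emph{non-trivial} circles. This already happens for the annular Hopf link: the resolution consisting of a single null-homotopic circle splits, under the map $\Delta_2$, into the resolution with two circles each winding once around the puncture. This is exactly the phenomenon that makes the annular/APS differential only filtered rather than graded, so an enumeration that excludes this case by fiat omits the central case of the lemma. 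Note also that the homological argument you use for (v) does not yield (iv): for a split one only gets $[C_b]+[C_c]=[C_a]=0$ with suitable orientations, which says that $C_b$ and $C_c$ enclose the same set of punctures, not that they are null-homotopic.

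The missing case can be handled in the same spirit, and the lemma survives. When a trivial circle splits into two non-trivial circles, the two output circles are disjoint simple closed curves in the planar surface $\Sigma$ enclosing the same set of punctures, hence isotopic, hence equal in $C(\Sigma)$. If the trivial circle is labeled $+$, the split produces the labelings $(+,-)$ and $(-,+)$ on these two homotopic circles, so their contributions cancel and $\Phi(y)=\Phi(x)$; if it is labeled $-$, the output is $(-,-)$ and $\epsilon(\Phi(y))=\epsilon(\Phi(x))-2$. With this case added (your facts (i)--(iii) and (v) are correct for a punctured disk and are also used in the intended proof), your enumeration closes up and matches the paper's argument. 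As a minor simplification, in the cases where all circles involved are non-trivial you can skip the sign bookkeeping: the differential preserves the quantum grading while raising the homological grading by one, so the total signed circle count drops by $1$, and when no trivial circles are involved this forces $\epsilon(\Phi(y))=\epsilon(\Phi(x))-1$ directly.
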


\begin{proof}
Let $D$ be a link diagram in $\Sigma$, and let $x,y \in \CKh(D)$ be generators such that $\diff(x)$ contains a non-zero multiple of $y$ as a summand. We have a few cases to consider:
\begin{itemize}
    \item If $x$ and $y$ are related by a merge or split involving only non-trivial circles, then $\epsilon(\Phi(y)) = \epsilon(\Phi(x)) - 1$ since we know that $\diff$ preserves the $q$-grading. Therefore, $\epsilon(\Phi(y)) < \epsilon(\Phi(x))$.
    \item If $y$ is obtained from $x$ by splitting a trivial circle into two non-trivial circles, then it must be that both non-trivial circles represent the same class in $C(\Sigma)$. Therefore, either $\Phi(x) = \Phi(y)$ (if the trivial circle is marked with a $+$) or $\epsilon(\Phi(y)) = \epsilon(\Phi(x)) - 2$ (if the trivial circle is marked with a $-$).
    \item If $y$ is obtained from $x$ by splitting a non-trivial circle into a trivial circle and a non-trivial circle, then both non-trivial circles represent the same class in $C(\Sigma)$, so $\Phi(x) = \Phi(y)$.
    \item If $y$ is obtained from $x$ by merging a non-trivial circle and a trivial circle, then the resulting circle is non-trivial, and represents the same class in $C(\Sigma)$, so $\Phi(x) = \Phi(y)$.
    \item If $y$ is obtained from $x$ by merging two non-trivial circles to obtain a trivial circle, then the two non-trivial circles must represent the same class in $C(\Sigma)$, so $\Phi(x) = \Phi(y)$.
    \item In all other cases, $\Phi(x) = \Phi(y)$. \qedhere
\end{itemize}
\end{proof}

Now, we have the language needed to define our variant of APS homology.

\begin{definition}
Let $D$ be a diagram for a link $L$ in $\Sigma \cross \I$. Regard $\CKh(D)$ as a $\Z C(\Sigma)$-filtered module under the partial order $\trianglelefteq$. Define
\eq{
    \wt{\APS}(L) \defeq H^*(\gr^\Phi(\CKh(D)))
}
\end{definition}

\begin{proposition}[{c.f.\ \cite[Theorem 6.2]{asaeda2004}}]
$\wt{\APS}(L)$ is an invariant of links in $\Sigma \cross \I$ up to ambient isotopy.
\end{proposition}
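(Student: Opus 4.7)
The plan is to reduce invariance to checking the three Reidemeister moves and, for each one, to upgrade the standard Khovanov chain homotopy equivalences to \emph{filtered} chain homotopy equivalences with respect to $\Phi$ and the partial order $\trianglelefteq$. Because homology of the associated graded commutes with filtered chain homotopy equivalences (exactly as in the $\MKh$ invariance argument of \cref{sec:invariance}, applied to the poset $(\Z C(\Sigma),\trianglelefteq)$ in place of $(H_1(\Sigma;\Z),\le)$), it suffices to show that for each Reidemeister move $D \leftrightarrow D'$ the standard local maps $f: \CKh(D) \to \CKh(D')$ and $g: \CKh(D') \to \CKh(D)$, together with the chain homotopies $h$ witnessing $fg \simeq \id$ and $gf \simeq \id$, are all $\trianglelefteq$-filtered with respect to $\Phi$. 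Throughout, only the portion of each generator inside the local disk of the move changes, so the analysis reduces to tracking how $\Phi$ behaves on a finite list of local pictures.

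First I would handle R2 and R3, which are the easier cases because no framing correction is involved. Here the local circles created or destroyed by the homotopy equivalences are all null-homotopic arcs in the move disk, or else appear/disappear in homotopic pairs. Since $C(\Sigma)$ records only homotopy classes of \emph{non-trivial} simple closed curves, a null-homotopic circle contributes nothing to $\Phi$, and a pair of parallel non-trivial circles contributes the same class with possibly opposite signs. A case-by-case check of the components of $f$, $g$, and $h$ (which are each composites of merge/split/saddle-type maps of the kind analyzed in \cref{lemma:aps-filtered}) then shows that they either preserve $\Phi$ exactly or strictly decrease $\epsilon\circ\Phi$, which is precisely the condition for being $\trianglelefteq$-filtered. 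This is essentially the same argument used by APS to prove framed invariance; the change of partial order from equality-of-$\Phi$ to $\trianglelefteq$ actually makes the condition \emph{weaker}, so nothing is lost.

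The main obstacle is R1, since this is exactly where $\APS(L)$ fails to be invariant and where our convention deviates. The kink introduced by R1 produces a small null-homotopic circle whose two resolutions differ in writhe by $\pm 1$; the writhe shifts already built into $\CKh(D)$ (via the factor $\{|S|-n_-+\writhe(D)\}$ in \cref{sec:khovanov}) compensate for this in the homological and quantum gradings. What must be checked is that the classical Khovanov R1 chain homotopy equivalence, when viewed through the $\Phi$-filtration, is also $\trianglelefteq$-filtered after these shifts. The key observation is that the small R1 circle is null-homotopic in $\Sigma$, so splitting it off or merging it in does not change the sum $\sum_i s_i[C_i]$ in $\Z C(\Sigma)$ coming from the non-trivial circles; any saddle map in the homotopy equivalence either fixes $\Phi$ outright or discards a $v_-$ on a null-homotopic circle, which is exactly the behavior analyzed in the proof of \cref{lemma:aps-filtered}. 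Putting this together with the R2 and R3 checks yields that the filtered chain homotopy type of $(\CKh(D),\diff,\filt^\Phi)$ is a link invariant, and hence so is $\wt{\APS}(L)=H^*(\gr^\Phi \CKh(D))$.
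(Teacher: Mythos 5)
Your argument is correct in outline, but it takes a genuinely different route from the paper. The paper does not re-verify any Reidemeister moves: it translates the original APS gradings $(I,J,\Psi)$ into the gradings on $\wt{\APS}(L)$ (homological, quantum, and $\Phi = -\Psi$) and then cites \cite[Theorem 6.2]{asaeda2004}, which already gives invariance under Reidemeister II and III and says that Reidemeister I induces isomorphisms with \emph{fixed grading shifts}; the point of the proof is that the regraded quantum grading contains the term $\writhe(L)$, so those R1 shifts are absorbed and framed invariance upgrades to unframed invariance. You instead re-prove invariance from scratch, checking that the standard Khovanov Reidemeister equivalences are $\trianglelefteq$-filtered with respect to $\Phi$, in the style of the $\MKh$ argument of \cref{sec:invariance}; your key observations (the small circles in the move regions are null-homotopic because the move represents an isotopy of $\Sigma \cross \I$, the writhe normalization already built into $\CKh(D)$ handles the $(i,j)$-gradings under R1, and the differential components are controlled by \cref{lemma:aps-filtered}) are the right ones, and the filtered-homotopy formalism does pass to the associated graded over a field. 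The trade-off: the paper's proof is shorter, leans on the published APS theorem, and produces the explicit grading dictionary that is reused for \cref{thm:euler-characteristic}; yours is self-contained and would in fact yield the stronger statement that the $\Phi$-filtered chain homotopy type is an invariant, but at the cost of redoing case analyses (especially for R3, where you assert rather than carry out the check that the maps and homotopies are $\trianglelefteq$-filtered) that APS have already done. If you pursue your route, you should make explicit that the Reidemeister disks are assumed puncture-free, exactly as the paper does in \cref{sec:invariance}, since that is what makes the local circles trivial in $C(\Sigma)$.
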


\begin{proof}
For $S=\set{(C_i, s_i)}_i$ a resolution of a diagram $D$, the original gradings on APS homology\footnote{The original definition used the opposite convention regarding $+$ and $-$ labels on circles, as well as different terminology used to describe resolutions.} are
\begin{itemize}
    \item $I(S) \defeq \#\set{\text{0-resolved crossings}} - \#\set{\text{1-resolved crossings}}$
    \item $J(S) \defeq I(S) + 2\tau(S)$, \newline where $\tau(S) \defeq \#\set{\text{trivial circles labeled $-$}} - \#\set{\text{trivial circles labeled $+$}}$
    \item $\Psi(S) \defeq \sum_{i} (-s_i) \bvec{C_i}$
\end{itemize}

We can match these with our gradings on $\wt{\APS}(L)$ as follows:
\begin{itemize}
    \item $\gr[h](S) = \frac{1}{2}(n-I(S))$
    \item $\gr[q](S) = -(\epsilon(\Psi(S))+\tau(S))+\gr[h](S)+\writhe(L)$
    \item $\Phi(S) = -\Psi(S)$
\end{itemize}
In \cite{asaeda2004}, it was proven that $\APS(L)$ is an invariant of framed links, with Reidemeister I moves inducing isomorphisms of modules with fixed grading shifts. With our change of gradings, $\wt{\APS}(L)$ is actually an invariant of unframed links.
\end{proof}

Fix an ordering on the basis of $\Z C(\Sigma)$, and consider the ring of Laurent polynomials $\Z[q^{\pm 1}, x_{c_1}^{\pm 1}, x_{c_2}^{\pm 1}, \dots]$ for $c_i \in C(\Sigma)$. When $\Sigma$ is implied, we will denote the above using the shorthand $\Z[q^{\pm 1}, x^{\pm 1}]$. Additionally, given a vector $v=(v_1,v_2,\dots) \in \Z C(\Sigma)$, let

\begin{equation*}
    x^v \defeq \prod_i x_{c_i}^{v_i} = x_{c_1}^{v_1}x_{c_2}^{v_2}\dots
\end{equation*}

Similarly, we will write $\Z[q^{\pm 1}, y^{\pm 1}]$ to denote the ring $\Z[q^{\pm 1}, y_{c_1}^{\pm 1}, y_{c_2}^{\pm 1}, \dots, y_{c_n}^{\pm 1}]$ for a basis $\set{c_i}_{i=1}^n$ of $H_1(\Sigma;\Z)$, and define $y^v$ analogously for $v \in H_1(\Sigma;\Z)$.

\begin{definition}
Define the Euler characteristics of our homologies as follows:
\eq{
    \chi(\wt{\APS}(L)) &\defeq \sum_{i,j,v} (-1)^i q^j x^v \rk (\wt{\APS}^{i,j,v}(L)) \qquad &\in \Z[q^{\pm 1}, x^{\pm 1}] \\
    \chi(\MKh(L)) &\defeq \sum_{i,j,v} (-1)^i q^j y^v \rk (H^{i,j,v}(L)) \qquad &\in \Z[q^{\pm 1}, y^{\pm 1}]
}
\end{definition}

Define $h: \Z C(\Sigma) \to H_1(\Sigma;\Z)$ on generators as $h(c) = [c]$, and extend linearly. We can then see that $h$ induces a map between these two Euler characteristics:

\begin{proof}[Proof of \cref{thm:euler-characteristic}]
Define the map $\chi_h$ by letting $\chi_h(q) = q$ and $\chi_h(x_c) = y_{[c]}$, then extending algebraically. The proof that $\chi_h(\chi(\wt{\APS}(L))) = \chi(\MKh(L))$ follows from the fact that both homologies come from filtrations on the same chain complex.
\end{proof}

We actually have a stronger connection between $\wt{\APS}(L)$ and $\MKh(L)$. In the same vein as \cref{sec:spectral-sequences}, we can construct a spectral sequence between the two homologies. As before, we will ``flatten'' the filtrations so that we can use the standard spectral sequence construction\footnote{Morally, this should probably also be thought of as induced by $h$, but flattening the gradings obscures this relationship.}.

For a link diagram $D \subset \Sigma$, let $F^\Phi$ be the $\Z$-filtration induced by the $\Z$-grading $\epsilon \comp \Phi$ on $\CKh(D)$. Similarly, let $F^\Sigma$ be the $\Z$-filtration induced by the $\Z$-grading $\epsilon \comp \g^\Sigma$ on $\CKh(D)$. We can prove the analogous statement to \cref{lemma:associated-graded} for these filtrations.

\begin{lemma}\label{lemma:aps-associated-graded}
Let $\gr^\Phi$ denote the associated graded object with respect to $F^\Phi$, and define $\gr^\Sigma$ analogously. Then there is an isomorphism of $\Z$-graded chain complexes $\gr^\Phi ( \gr^\Sigma \CKh(D)) \iso \gr^\Phi \CKh(D)$.
\end{lemma}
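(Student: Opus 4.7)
The plan is to apply \cref{lemma:associated-graded} with $\g^1 \defeq \epsilon \comp \Phi$ and $\g^2 \defeq \epsilon \comp \g^\Sigma$. Both gradings induce $\Z$-filtrations on $\CKh(D)$ that are compatible with the Khovanov differential — the first by \cref{lemma:aps-filtered} (together with the fact that $\epsilon$ is a map of posets out of $(\Z C(\Sigma), \trianglelefteq)$), and the second by \cref{lemma:proof-filtered-differential} after post-composing with $\epsilon$. So it suffices to verify the hypothesis of \cref{lemma:associated-graded}: for generators $x, y \in \CKh(D)$ with $y$ appearing with non-zero coefficient in $\diff(x)$, we must show $(\epsilon\comp\Phi)(x) = (\epsilon\comp\Phi)(y)$ implies $(\epsilon\comp\g^\Sigma)(x) = (\epsilon\comp\g^\Sigma)(y)$.

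The key observation is that the Hurewicz-like map $h: \Z C(\Sigma) \to H_1(\Sigma; \Z)$ sending $\bvec{C} \mapsto [C]$ satisfies $\g^\Sigma = h \comp \Phi$ on generators: trivial circles contribute zero to both gradings, and the defining formulas for $\Phi$ and $\g^\Sigma$ agree on the non-trivial circles after applying $h$. Consequently, if $\Phi(x) = \Phi(y)$ in $\Z C(\Sigma)$, then automatically $\g^\Sigma(x) = \g^\Sigma(y)$, so it will in fact be enough to prove the stronger statement that $(\epsilon \comp \Phi)(x) = (\epsilon \comp \Phi)(y)$ forces $\Phi(x) = \Phi(y)$.

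To establish this stronger claim, I revisit the case analysis from the proof of \cref{lemma:aps-filtered}. Inspecting each case, the only ways the Khovanov differential can \emph{strictly decrease} $\epsilon \comp \Phi$ are (i) merges or splits involving only non-trivial circles, which drop $\epsilon \comp \Phi$ by exactly $1$, and (ii) the split of a $-$-labeled trivial circle into two (necessarily parallel) non-trivial circles, which drops it by $2$. In every other case listed in that proof, the equality $\Phi(x) = \Phi(y)$ holds exactly in $\Z C(\Sigma)$, not merely after applying $\epsilon$. Taking the contrapositive, if $\epsilon \comp \Phi$ is preserved across an edge of the cube of resolutions, then we cannot be in case (i) or (ii), so $\Phi$ itself is preserved, and applying $h$ gives the desired equality of $\g^\Sigma$-gradings.

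With the hypothesis of \cref{lemma:associated-graded} verified, that lemma directly yields the required isomorphism $\gr^\Phi(\gr^\Sigma \CKh(D)) \iso \gr^\Phi \CKh(D)$ of $\Z$-graded chain complexes. The only subtlety — which I expect to be the main bookkeeping obstacle — is that the symbol $\epsilon$ plays two distinct roles, once as a map $\Z C(\Sigma) \to \Z$ and once as a map $H_1(\Sigma; \Z) \to \Z$, and one has to check that these two versions interact cleanly via $h$ (which they do, because $h$ sends a basis element $\bvec{C}$ to a sum of puncture-loop generators in $H_1(\Sigma;\Z)$ whose signs all agree with that of $\bvec{C}$, so $\epsilon$ commutes with $h$ on signed sums coming from enhanced states). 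The remainder of the argument is essentially a re-reading of the case analysis of \cref{lemma:aps-filtered}, strengthened by the mild observation above.
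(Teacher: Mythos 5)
Your proposal is correct and takes essentially the same route as the paper: both arguments verify the hypothesis of \cref{lemma:associated-graded} for the flattened gradings $\epsilon\comp\Phi$ and $\epsilon\comp\g^\Sigma$, reduce to the case analysis behind \cref{lemma:aps-filtered} to see that an edge of the cube preserving $\epsilon\comp\Phi$ must preserve $\Phi$ itself, and then pass from $C(\Sigma)$-classes to homology classes (the paper phrases your identity $\g^\Sigma = h\comp\Phi$ as ``two circles with the same class in $C(\Sigma)$ also represent the same class in $H_1(\Sigma)$''). Two small inaccuracies in your write-up, neither of which damages the argument: first, your list (i)--(ii) of the ways $\epsilon\comp\Phi$ can strictly drop is not exhaustive (e.g.\ merging a $+$-labeled non-trivial circle with a $-$-labeled trivial circle, or the summand of a split of a $+$-labeled non-trivial circle in which the new trivial circle receives the $+$, drops $\epsilon\comp\Phi$ by $2$), but the dichotomy you actually use --- every summand of $\diff$ either preserves $\Phi$ exactly or strictly decreases $\epsilon\comp\Phi$, which is precisely the $\trianglelefteq$-filtration statement of \cref{lemma:aps-filtered} --- is true, so the contrapositive goes through; second, the parenthetical claim that $\epsilon$ commutes with $h$ is false (a non-trivial curve $C$ enclosing $k\ge 2$ punctures has $\epsilon(h(\bvec{C})) = k$ while $\epsilon(\bvec{C}) = 1$), but it is also never needed, since your chain of implications ($\epsilon\comp\Phi$ preserved $\Rightarrow$ $\Phi$ preserved $\Rightarrow$ $\g^\Sigma = h\comp\Phi$ preserved $\Rightarrow$ $\epsilon\comp\g^\Sigma$ preserved) only ever applies $\epsilon$ after the gradings, not through $h$.
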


\begin{proof}
Our filtrations $F^\Phi$ and $F^\Sigma$ come from two gradings $\Phi$ and $\g^\Sigma$ (respectively) on $\CKh(D)$, so we can consider $\CKh(D)$ as a $(\Phi,\g^\Sigma)$-bigraded module. We already know that $\gr(M)\iso M$ for any filtered vector space $M$, so we have that $\gr^\Phi(\gr^\Sigma(\CKh(D))) \iso \gr^\Phi(\CKh(D))$ as modules. The interesting part is proving the analogous fact for the differential.

Let $\diff$ denote the Khovanov differential on $\CKh(D)$. We would like to show that, for any generators $x,y \in \CKh(D)$ with $y$ contained in $\diff(x)$, if $(\epsilon\comp\Psi)(y) - (\epsilon\comp\Psi)(x) = 0$, then $(\epsilon\comp\g)(y) - (\epsilon\comp\g)(x) = 0$. This requires checking the same cases as in the proof of \cref{lemma:aps-filtered} where $\Phi(x) = \Phi(y)$. It turns out that in every case, we get two circles with the same class in $C(\Sigma)$ cancelling each other out. Additionally, if $C_1$ and $C_2$ are two circles representing the same class in $C(\Sigma)$, then they must also represent the same class in $H_1(\Sigma)$. Therefore, we get that $(\epsilon\comp\g)(y) - (\epsilon\comp\g)(x) = 0$ as well.

Since $(\epsilon\comp\Psi)(y) - (\epsilon\comp\Psi)(x) = 0$ implies that $(\epsilon\comp\g)(y) - (\epsilon\comp\g)(x) = 0$, we get that $\gr ( \gr' \CKh(D)) \iso \gr \CKh(D)$.
\end{proof}

Finally, we have all the necessary pieces together to prove \cref{thm:aps-spectral-sequence}.

\begin{proof}[Proof of \cref{thm:aps-spectral-sequence}]
Our spectral sequence is induced by the filtered complex $(\gr'(\CKh(D)), \filt)$. The $E_0$ page of this spectral sequence is $\gr(\gr' \CKh(D))$, which is isomorphic to $\gr(\CKh(D))$ as a chain complex by \cref{lemma:aps-associated-graded}. Therefore, by \cref{lemma:two-filtrations-spectral-sequence}, we have a spectral sequence with
\eq{
E_1 \iso H^*(\gr(\CKh(D)) \iso \wt{\APS}(L)
}
that converges to the underlying module of
\begin{equation*}
    E_\infty \iso \gr H^*(\gr' \CKh(D)) \iso H^*(\gr'\CKh(D)) \iso \MKh(L \subset \Sigma \cross \I) \qedhere
\end{equation*}
\end{proof}

%%%%%%%%%%%%%%%%%%%%%%%%%%%%%%%%%%%%%%%%%%%%%%%%%%%%%%%%%%%%%%%%%%%%%%%%%%%%%%%%

\section{Invariance}
\label{sec:invariance}

In this section, we finish what we started in \cref{sec:mkh} by providing the proofs that $\MKh(L)$ is a well-defined link invariant, thereby proving \cref{thm:mkh-invariance}.

\begin{remark}\label{remark:total-complexes}
One nice feature about chain complexes arising from a cube of resolutions is that they can naturally be presented as complexes of complexes (of complexes of\dots, etc.\ ) Throughout this section, we will often implicitly identify double complexes with their total complexes and vice versa. It is worth pointing out that this trick still works when dealing with filtered objects.

Specifically, we note that a chain complex of filtered modules is the same thing as a filtered chain complex. This allows us to think about chain complexes of filtered chain complexes as filtered double complexes, which we then flatten into filtered chain complexes.
\end{remark}

%%%%%%%%%%%%%%%%%%%%%%%%%%%%%%%%%%%%%%%%%%%%%%%%%%%%%%%%%%%%%%%%%%%%%%%%%%%%%%%%

\subsection{Reidemeister I invariance}
\label{sec:proof-r1-invariance}

\begin{remark}
For the next three subsections, we aim to mimic Bar-Natan's argument for the invariance of Khovanov homology (see \cite{bar-natan2005}), while checking some extra conditions to ensure that the our filtration on the Khovanov complex is also suitably invariant. For some diagram $D$, we will use the notation $\complex{D}$ throughout to denote the filtered chain complex $\CKh(D_\Sigma)$, and we will often describe complexes as complexes of complexes as in \cref{remark:total-complexes}.
\end{remark}

\subsubsection{Unfiltered version}
\label{sec:proof-r1-invariance-unfiltered}

We would like to show that the complexes $\complex{\rI}$ and $\complex{\rIb}$ have the same homology. Let
\eq{
C = \complex{\rI} = \left( \complex{\rIa} \to*{m} \complex{\rIb}\{1\} \right)
}
and let
\eq{
C' = \left( \complex{\rIa}_{v_+} \to*{m} \complex{\rIb}\{1\} \right) \sub C
}
The notation $\complex{\dots}_{v_+}$ denotes the subcomplex consisting of all generators represented by diagrams in which the loop pictured in the diagram is labeled with a $+$. Note that, since $v_+$ is a ``unit'' for the merge $m$, it follows that $m$ is an isomorphism in $C'$, so $C'$ is acyclic. This means that the quotient map $q: C\to C/C'$ is a quasi-isomorphism. Consider the quotient
\eq{
C/C' = \left( \complex{\rIa}_{/v_+=0} \to 0 \right)
}

The notation $\complex{\dots}_{/v_+}$ denotes the quotient by all generators represented by diagrams in which the loop is labeled with a $+$. We want to construct a map $f: C/C' \to \complex{\rIb}$. On generators:
\eq{
    f : \underset{x \tensor v_-}{\rIa} \mapsto \underset{x}{\rIb}
}
Since all generators in $\complex{\rIa}_{/v_+=0}$ have the closed component marked with a $-$, we can see that $f$ is an isomorphism. We can then combine this with $q$ above to get a map $\rho_I: C = \complex{\rI} \to \complex{\rIb}$, defined as $\rho_I = f \compose q$. Since $q$ is a quasi-isomorphism and $f$ is an isomorphism, we have shown that $\rho_I$ is a quasi-isomorphism as well, thus completing the proof of Reidemeister I invariance.

\subsubsection{Filtered version}
\label{sec:proof-r1-invariance-filtered}

We need to check that $\rho_I: \complex{\rI} \to \complex{\rIb}$ is a filtered quasi-isomorphism. To help, we start by labelling components of $\Sigma \setminus D$ as below:
\eq{
    \rIregions
}
Extend these component labels to any other diagram obtained as a resolution of this one:
\begin{center}\begin{tabular}{cc}
      \rIaregions & \rIbregions
\end{tabular}\end{center}
Recall that $\rho_I = f \comp q$. We will proceed to prove that each of the factors $f$ and $q$ are filtered quasi-isomorphisms.

The map $q: C \to C/C'$ is strict, since we defined it via a quotient by a subcomplex. Therefore, we have a short exact sequence
\cd{
0 \ar[r] \& \gr(C') \ar[r] \& \gr(C) \ar[r,"\gr(q)"] \& \gr(C/C') \ar[r] \& 0
}
To verify that $q$ is a filtered quasi-isomorphism, it therefore suffices to show that $\gr(C')$ is acyclic; the long exact sequence induced by the above would then show that $\gr(q)$ induces an isomorphism $H^*(\gr(C)) \to H^*(\gr(C/C'))$. Above, we showed that $C'$ is acyclic by noticing that the differential is given by a merge map $m$ that restricts to an isomorphism. By looking at the diagrams representing the source and target of $m$, we see that $m$ leaves the grading with respect to punctures in $B$ unchanged, and combines the gradings with respect to punctures in $A$ and $C$. We only need to consider Reidemeister I moves that represent smooth isotopy in $\Sigma \cross \I$; therefore, we can assume that there are no punctures in $C$, and thus that $m$ preserves the grading of all generators. This implies that $\gr(m)$ is also an isomorphism, and thus that $H^*(\gr(C'))=0$. This completes the proof that $q$ is a filtered quasi-isomorphism.

We have already noted above that $f$ is an isomorphism. From the definition of $f$, we see that $f$ leaves the filtration with respect to punctures in $A$ and $B$ unchanged, as we assume there are no punctures in $C$. Therefore, $f$ is a filtered isomorphism, which therefore implies that it is a filtered quasi-isomorphism.

This completes the proof that $\rho_I: \complex{\rI} \to \complex{\rIb}$ is a filtered quasi-isomorphism, and thus that $\MKh(L)$ is invariant under Reidemeister I moves.

%%%%%%%%%%%%%%%%%%%%%%%%%%%%%%%%%%%%%%%%%%%%%%%%%%%%%%%%%%%%%%%%%%%%%%%%%%%%%%%%

\subsection{Reidemeister II invariance}
\label{sec:proof-r2-invariance}

\subsubsection{Unfiltered version}

We would like to show that the complexes $\complex{\rII}$ and $\complex{\rIIba}$ have the same homology. Let $C = \complex{\rII}$ be the complex
\begin{center}\begin{tikzcd}
\complex{\rIIab}\{1\} \ar[r,"m"] & \complex{\rIIbb}\{2\} \\
\complex{\rIIaa} \ar[r,"\diff_1"] \ar[u,"\Delta"] & \complex{\rIIba}\{1\} \ar[u,"\diff_2"]
\end{tikzcd}\end{center}
Note that $\diff_1$ and $\diff_2$ can either be merge or split maps, depending on the rest of the diagram. Let $C' \sub C$ be the subcomplex
\begin{center}\begin{tikzcd}
\complex{\rIIab}_{v_+}\{1\} \ar[r,"m"] & \complex{\rIIbb}\{2\} \\
0 \ar[r] \ar[u] & 0 \ar[u]
\end{tikzcd}\end{center}
Note that the merge map $m$ in $C'$ is an isomorphism, so $C'$ is acyclic.
Consider the quotient $C/C'$:
\begin{center}\begin{tikzcd}
\complex{\rIIab}_{/v_+=0}\{1\} \ar[r] & 0 \\
\complex{\rIIaa} \ar[r,"\diff_1"] \ar[u,"\Delta"] & \complex{\rIIba}\{1\} \ar[u]
\end{tikzcd}\end{center}
We can see that $C'' = \complex{\rIIba}\{1\}$ sits inside $C/C'$ as a subcomplex. Additionally, the split map $\Delta$ is an isomorphism in $C/C'$. Consider another subcomplex $C''' \sub C/C'$ consisting of all $\alpha \in \complex{\rIIaa}$ and all $(\beta, \tau \beta)$ in the graph of $\tau = \diff_1\Delta^{-1}$:
\cd{
    \beta \ar[r] \ar[dr,dashed,"\tau"] \& 0 \\
    \alpha \ar[r,"\diff_1"] \ar[u,"\Delta"] \& \tau \beta \ar[u]
}
The map $\Delta$ in $C'''$ is an isomorphism, so $C'''$ is acyclic.
Quotienting $C/C'$ by $C'''$ gives us the complex
\cd{
    \beta_{/\beta=\tau\beta} \ar[r] \ar[dr,dashed,"\tau"] \& 0 \\
    0 \ar[r] \ar[u] \& \gamma \ar[u]
}
Since all $\beta$ are identified with some $\gamma$, an element of the above complex is uniquely determined by a choice of $\gamma$, so we get the isomorphism $(C/C')/C''' \iso C''$. Since $C'$ and $C'''$ are acyclic, we find that $H^*(C) \iso H^*(C'')$ as desired.

\subsubsection{Filtered version}

We need to define a map $\rho_{II}: \complex{\rII} \to \complex{\rIIba}$ and check that this is a filtered quasi-isomorphism. We can define $\rho_{II}$ as $f \compose q_2 \compose q_1$, where $q_1: C \to C/C'$ and $q_2: C/C' \to (C/C')/C'''$ are the two quotient maps, and $f: (C/C')/C''' \to C''$ is the isomorphism implied above. We will proceed to prove that $q_1$, $q_2$, and $f$ are filtered quasi-isomorphisms. To help with this, label the regions of $\sigma \setminus L$ as below.
\eq{\rIIregions}
As before, extend these component labels to any other diagram obtained as a resolution of this one.

The proof that $q_1: C \to C/C'$ is a filtered quasi-isomorphism will be very similar to the case of the map $q$ in \cref{sec:proof-r1-invariance-filtered}. Note that $q_1$ is strict, as it is defined via quotient by a filtered submodule. We will prove that $q_1$ is a filtered quasi-isomorphism by proving that $\gr(C')$ is acyclic. Before, we proved that $C'$ is acyclic by noting that the differential is given by a merge $m$ which is an isomorphism. We can now observe that $m$ preserves the grading with respect to punctures in $A$, $B$, $C$, and $D$, and may change the grading with respect to punctures in $E$. Because we assume our Reidemeister II move represents smooth isotopy in $\Sigma \cross \I$, we can assume that there are no punctures in $E$, and therefore that $\gr(m)$ is an isomorphism. Therefore, $\gr(C')$ is acyclic, and thus $q_1$ is a filtered quasi-isomorphism.

Next, we will prove that $q_2: C/C' \to (C/C')/C'''$ is a filtered quasi-isomorphism. Again, $q_2$ is strict, so it suffices to prove that $\gr(C''')$ is acyclic. Note that the isomorphism $\Delta$ in $C'''$ preserves the grading with respect to punctures in $A$, $B$, $C$, and $D$, and may change the grading with respect to punctures in $E$. By the same reasoning as above, there are no punctures in $E$, so $\Delta$ is a graded isomorphism. Therefore, $\gr(C''')$ is acyclic.

Note that $f: (C/C')/C''' \to C''$ is already an isomorphism; the only barrier to it being a filtered isomorphism is that the generators of $\complex{\rIIab}_{/v_+=0}$ are identified with certain elements of $\complex{\rIIba}$ via $\tau = \diff_1\Delta^{-1}$. We can use \cref{quotient-grading} here, and check that for any generator represented by a diagram $\rIIba$, we cannot find a diagram $\rIIab$ with a lower $\g^\Sigma$-grading. Therefore, we will show that $f$ is a filtered isomorphism by showing that $\tau$ does not increase the $\g^\Sigma$-grading of any element. We know that the Khovanov differential $\diff_1$ does not increase the $\g^\Sigma$-grading (see \cref{lemma:proof-filtered-differential}), so it remains to check that $\Delta^{-1}: \complex{\rIIab}_{/v_+=0} \to \complex{\rIIaa}$ does not increase the $\g^\Sigma$-grading. We have already proven that $\Delta$ doesn't change the $\g^\Sigma$-grading, so it follows that $\Delta^{-1}$ does not either. This means that $f$ is a filtered isomorphism, and therefore a filtered quasi-isomorphism.

Since $\rho_{II} = f \comp q_2 \comp q_1$ and all three maps on the right-hand side are filtered quasi-isomorphisms, $\rho_{II}$ must be as well. Therefore, our homology $\MKh(L)$ is invariant under Reidemeister II moves.

%%%%%%%%%%%%%%%%%%%%%%%%%%%%%%%%%%%%%%%%%%%%%%%%%%%%%%%%%%%%%%%%%%%%%%%%%%%%%%%%

\subsection{Reidemeister III invariance}
\label{sec:proof-r3-invariance}

\subsubsection{Unfiltered version}

We would like to show that the complexes $L = \complex{\rIII}$ and $R = \complex{\rIIIx}$ have the same homology. Written out (suppressing the $\complex{}$ notation and grading shifts), these complexes are:
\begin{center}
\begin{tikzcd}[row sep=small, column sep=small]
& \rIIIbba \ar[rr] & & \rIIIbbb \\
\rIIIbaa \ar[rr] \ar[ur] & & \rIIIbab \ar[ur] & \\
& \rIIIaba \ar[rr] \ar[uu] & & \rIIIabb \ar[uu] \\
\rIIIaaa \ar[rr] \ar[ur] \ar[uu] & & \rIIIaab \ar[ur] \ar[uu] &
\end{tikzcd}
\qquad
\begin{tikzcd}[row sep=small, column sep=small]
& \rIIIxbba \ar[rr] & & \rIIIxbbb \\
\rIIIxbaa \ar[rr] \ar[ur] & & \rIIIxbab \ar[ur] & \\
& \rIIIxaba \ar[rr] \ar[uu] & & \rIIIxabb \ar[uu] \\
\rIIIxaaa \ar[rr] \ar[ur] \ar[uu] & & \rIIIxaab \ar[ur] \ar[uu] &
\end{tikzcd}
\end{center}
Note that the top level of $L$ is isomorphic to $\complex{\rIIIb}$ and the top level of $R$ is isomorphic to $\complex{\rIIIxb}$. These diagrams are related by two Reidemeister II moves, so we can re-use some of the constructions from \cref{sec:proof-r2-invariance} here.

Specifically, if we identify the top level of $L$ with the complex $C$ from the previous section, the definition of $C' \subseteq C$ gives us a subcomplex $L' \subseteq L$. We can also mimic the definition of $C''' \subseteq C/C'$ to get a subcomplex $L''' \subseteq L/L'$. Note that, since $L'$ and $L'''$ are acyclic, $(L/L')/L'''$ has the same homology as $L$. We can also repeat this process on the right side by identifying the top level of $R$ with $C$, and defining $R'$ and $R'''$ in the same way. This gives us two complexes $(L/L')/L'''$ and $(R/R')/R'''$:
\begin{center}
\begin{tikzcd}[row sep=small, column sep=tiny]
& \rIIIbba_{/v_+=0} \ar[rr] \ar[dr,dashed,"\tau_L"] & & 0 \\
0 \ar[rr] \ar[ur] & & \rIIIbab \ar[ur] & \\
& \rIIIaba \ar[rr] \ar[uu] & & \rIIIabb \ar[uu] \\
\rIIIaaa \ar[rr] \ar[ur] \ar[uu] & & \rIIIaab \ar[ur] \ar[uu] &
\end{tikzcd}
\begin{tikzcd}[row sep=small, column sep=tiny]
& \rIIIxbba \ar[rr] & & 0 \\
0 \ar[rr] \ar[ur] & & \rIIIxbab_{/v_+=0} \ar[ur] \ar[ul,dashed,"\tau_R"] & \\
& \rIIIxaba \ar[rr] \ar[uu] & & \rIIIxabb \ar[uu] \\
\rIIIxaaa \ar[rr] \ar[ur] \ar[uu] & & \rIIIxaab \ar[ur] \ar[uu] &
\end{tikzcd}
\end{center}
Note that the bottom levels of $L$ and $R$ are isomorphic at each vertex since they're represented by isotopic diagrams. The two complexes above are then isomorphic via a map $\Upsilon: (L/L')/L''' \to (R/R')/R'''$ that preserves the bottom level, and transposes the top level via the isotopy $\rIIIbab \to \rIIIxbba$. Note that we don't need to define $\Upsilon$ on elements represented by diagrams like $\rIIIbba$ or $\rIIIxbab$ since these are already identified with other elements via the maps $\tau_L$ and $\tau_R$. Since we have shown that $L \homotopic (L/L')/L''' \iso (R/R')/R''' \homotopic R$, we conclude that $\Kh$ is invariant under Reidemeister III moves.

\subsubsection{Filtered version}
Since we have already verified in \cref{sec:proof-r2-invariance} that $\MKh(L)\iso \MKh((L/L')/L''')$ and $\MKh(R)\iso \MKh((R/R')/R''')$, all we need to check is that the isomorphism $\Upsilon$ is a filtered isomorphism. This isn't hard to see; $\Upsilon$ is defined by five planar isotopies of diagrams (four on the bottom and one on the top). We can assume that there are no punctures in the center region of the diagram, and each of these isotopies can be made to avoid the punctures in the six exterior regions. Therefore, $\MKh$ is invariant under Reidemeister III moves.

\begin{remark}
This, along with the previous two subsections, completes the proof that $\MKh(L)$ is a link invariant, which makes up the first half of \cref{thm:mkh-invariance}. The second half is the seemingly stronger statement that the filtered chain homotopy type of $\CKh(D;\field)$ is an invariant. This follows from the fact that two chain complexes of filtered projective modules are filtered quasi-isomorphic if and only if they are filtered chain homotopy equivalent (see \cite[\href{https://stacks.math.columbia.edu/tag/03TB}{Section 03TB}]{stacks-project}). Since $\CKh(D)$ is a free chain complex, and the filtration $\filt^\Sigma$ on $\CKh(D)$ is induced by a grading on generators, each associated graded complex $\gr[p]^\Sigma(\CKh(D))$ is a chain complex of free (thus projective) $\field$-modules. Therefore, given diagrams $D, D'$, the existence of a filtered quasi-isomorphism $\CKh(D \subset \Sigma) \to \CKh(D' \subset \Sigma)$ actually implies that the two complexes are filtered homotopy equivalent. We have constructed explicit filtered quasi-isomorphisms $\rho_I$ and $\rho_{II}$ realizing Reidemeister I and II moves on $\CKh(D)$ in \cref{sec:proof-r1-invariance-filtered} and \cref{sec:proof-r2-invariance}, respectively. Our proof of invariance for Reidemeister III moves does not produce a single quasi-isomorphism of free chain complexes, but does produce a zig-zag of such quasi-isomorphisms, which can be composed after realizing them as homotopy equivalences. Therefore, we have proved both halves of \cref{thm:mkh-invariance}.
\end{remark}
%%%%%%%%%%%%%%%%%%%%%%%%%%%%%%%%%%%%%%%%%%%%%%%%%%%%%%%%%%%%%%%%%%%%%%%%%%%%%%%%

\section{Further examples}
\label{sec:further-examples}

Since $\MKh(L)$ is in some sense constructed as a generalization of annular Khovanov homology, it is natural to ask if it is actually entirely \textit{determined} by $\AKh(L)$ with respect to each individual puncture. This, however, is not the case; there exist non-trivial knots in punctured disks which become trivial unknots when any puncture is removed, and $\MKh(L)$ distinguishes some of these knots from the unknot. A natural class of such knots is studied in \cite{demaine2013}. We will work out $\MKh(L)$ for one such knot here.

\begin{example}
Let $L$ be the link in the twice-punctured disk depicted below.

$$\pictureHangingPuzzle$$

We calculate $\MKh(L)$ to be the following:
\eq{
     \MKh^0(L) &\iso \Q_{(-2,0)}\{-1\} \directsum \Q_{(0,0)}^2\{1\} \directsum \Q_{(0,-2)}\{-1\} \\
     \MKh^1(L) &\iso \Q_{(0,-2)}\{1\} \directsum \Q_{(-2,0)}\{1\} \directsum \Q_{(-2,-2)}\{-1\} \\
     \MKh^2(L) &\iso \Q_{(-2,-2)}\{1\}
}
\end{example}

%%%%%%%%%%%%%%%%%%%%%%%%%%%%%%%%%%%%%%%%%%%%%%%%%%%%%%%%%%%%%%%%%%%%%%%%%%%%%%%%

\printbibliography

\end{document}